\documentclass[review]{elsarticle}
\usepackage{srcltx}
\usepackage{eurosym}
\usepackage{mathtools}
\usepackage{amsmath}
\usepackage{amsfonts}
\usepackage{amssymb}
\usepackage{amsthm}
\usepackage{graphicx}
\usepackage{mathrsfs}
\usepackage{xcolor}
\usepackage{exscale}
\usepackage{latexsym}
\usepackage{multicol}

\usepackage[backref=page,colorlinks,plainpages=true,pdfpagelabels,
hypertexnames=true,colorlinks=true,pdfstartview=FitV,linkcolor=blue,
citecolor=red,urlcolor=black]{hyperref}
\PassOptionsToPackage{unicode}{hyperref}
\PassOptionsToPackage{naturalnames}{hyperref}
\usepackage{enumerate}
\usepackage[shortlabels]{enumitem}
\usepackage{bookmark}
\usepackage{wasysym}
\usepackage{esint}
\usepackage[ddmmyyyy]{datetime}
\usepackage[margin=2cm]{geometry}
\parskip = 0.00in
\headsep=0.0in
\makeatletter
\g@addto@macro\normalsize{%
  \setlength\abovedisplayskip{2pt}
  \setlength\belowdisplayskip{2pt}
  \setlength\abovedisplayshortskip{4pt}
  \setlength\belowdisplayshortskip{4pt}
}
\numberwithin{equation}{section}
\everymath{\displaystyle}
\usepackage[capitalize,nameinlink]{cleveref}
\crefname{section}{Section}{Sections}
\crefname{subsection}{Subsection}{Subsections}
\crefname{condition}{Condition}{Conditions}
\crefname{hypothesis}{Hypothesis}{Hypothesis}
\crefname{assumption}{Assumption}{Assumptions}
\crefname{lemma}{Lemma}{Lemmas}
\crefname{claim}{Claim}{Claims}
\crefname{remark}{Remark}{Remarks}

\crefformat{equation}{\textup{#2(#1)#3}}
\crefrangeformat{equation}{\textup{#3(#1)#4--#5(#2)#6}}
\crefmultiformat{equation}{\textup{#2(#1)#3}}{ and \textup{#2(#1)#3}}
{, \textup{#2(#1)#3}}{, and \textup{#2(#1)#3}}
\crefrangemultiformat{equation}{\textup{#3(#1)#4--#5(#2)#6}}%
{ and \textup{#3(#1)#4--#5(#2)#6}}{, \textup{#3(#1)#4--#5(#2)#6}}%
{, and \textup{#3(#1)#4--#5(#2)#6}}

\Crefformat{equation}{#2Equation~\textup{(#1)}#3}
\Crefrangeformat{equation}{Equations~\textup{#3(#1)#4--#5(#2)#6}}
\Crefmultiformat{equation}{Equations~\textup{#2(#1)#3}}{ and \textup{#2(#1)#3}}
{, \textup{#2(#1)#3}}{, and \textup{#2(#1)#3}}
\Crefrangemultiformat{equation}{Equations~\textup{#3(#1)#4--#5(#2)#6}}%
{ and \textup{#3(#1)#4--#5(#2)#6}}{, \textup{#3(#1)#4--#5(#2)#6}}%
{, and \textup{#3(#1)#4--#5(#2)#6}}

\crefdefaultlabelformat{#2\textup{#1}#3}
\newtheorem{theorem}{Theorem}[section]
\newtheorem{lemma}[theorem]{Lemma}
\newtheorem{claim}[theorem]{Claim}
\newtheorem{corollary}[theorem]{Corollary}
\newtheorem{proposition}[theorem]{Proposition}

\newtheorem{definition}[theorem]{Definition}
\newtheorem{remark}[theorem]{Remark}        

\numberwithin{equation}{section}
\def\aa{\mathcal{A}}

\newcommand{\pa}{\partial}
\newcommand{\ddt}[1]{\frac{d#1}{dt}}


\newcommand{\tQ}{\tilde{Q}}

\newcommand{\trho}{\tilde{\rho}}

\newcommand{\ttht}{\tilde{\tht}}

\makeatletter
\newcommand{\vo}{\vec{o}\@ifnextchar{^}{\,}{}}
\makeatother
\def\Yint#1{\mathchoice
    {\YYint\displaystyle\textstyle{#1}}%
    {\YYint\textstyle\scriptstyle{#1}}%
    {\YYint\scriptstyle\scriptscriptstyle{#1}}%
    {\YYint\scriptscriptstyle\scriptscriptstyle{#1}}%
      \!\iint}
\def\YYint#1#2#3{{\setbox0=\hbox{$#1{#2#3}{\iint}$}
    \vcenter{\hbox{$#2#3$}}\kern-.50\wd0}}
\def\longdash{-\mkern-9.5mu-} 
\def\tiltlongdash{\rotatebox[origin=c]{18}{$\longdash$}}
\def\fiint{\Yint\tiltlongdash}
\def\Xint#1{\mathchoice
    {\XXint\displaystyle\textstyle{#1}}%
    {\XXint\textstyle\scriptstyle{#1}}%
    {\XXint\scriptstyle\scriptscriptstyle{#1}}%
    {\XXint\scriptscriptstyle\scriptscriptstyle{#1}}%
      \!\int}
\def\XXint#1#2#3{{\setbox0=\hbox{$#1{#2#3}{\int}$}
    \vcenter{\hbox{$#2#3$}}\kern-.50\wd0}}
\def\hlongdash{-\mkern-13.5mu-}
\def\tilthlongdash{\rotatebox[origin=c]{18}{$\hlongdash$}}
\def\hint{\Xint\tilthlongdash}
\makeatletter
\def\namedlabel#1#2{\begingroup
   \def\@currentlabel{#2}%
   \label{#1}\endgroup
}
\makeatother
\makeatletter
\newcommand{\rmh}[1]{\mathpalette{\raisem@th{#1}}}
\newcommand{\raisem@th}[3]{\hspace*{-1pt}\raisebox{#1}{$#2#3$}}
\makeatother

\newcommand{\lsb}[2]{#1_{\rmh{-3pt}{#2}}}
\newcommand{\lsbo}[2]{#1_{\rmh{-1pt}{#2}}}

\newcommand{\redref}[2]{\texorpdfstring{\protect\hyperlink{#1}{\textcolor{black}{(}\textcolor{red}{#2}\textcolor{black}{)}}}{}}
\newcommand{\redlabel}[2]{\hypertarget{#1}{\textcolor{black}{(}\textcolor{red}{#2}\textcolor{black}{)}}}

\newcommand{\descitem}[2]{\item[{(#1):}]\label{#2}}
\newcommand{\descref}[2]{\hyperref[#1]{\textnormal{\textcolor{black}{(}\textcolor{blue}{\bf #2}\textcolor{black}{)}}}}
\newcommand{\ditem}[2]{\item[#1] \label{#2}}
\newcommand{\dref}[2]{\hyperref[#1]{\textcolor{black}{(}\textcolor{blue}{\bf #2}\textcolor{black}{)}}}

\newcommand{\tp}{\tilde{p}}
\newcommand{\tq}{\tilde{q}}

\newcommand{\tw}{\tilde{w}}

\newcommand{\tz}{\tilde{z}}












\newcommand\RR{\mathbb{R}}

\newcommand\ZZ{\mathbb{Z}}

\newcommand{\al}{\alpha}
\newcommand{\be}{\beta}
\newcommand{\ga}{\gamma}
\newcommand{\de}{\delta}
\newcommand{\ve}{\varepsilon}

\newcommand{\tht}{\theta}
\newcommand{\ep}{\epsilon}
\newcommand{\ka}{\kappa}

\newcommand{\la}{\lambda}


\newcommand{\Om}{\Omega}

\DeclareMathOperator{\dv}{div}
\DeclareMathOperator{\spt}{spt}

\DeclareMathOperator{\loc}{loc}

\newcommand{\iprod}[2]{\langle #1 \ ,  #2\rangle}
\newcommand{\abs}[1]{\left| #1\right|}

\newcommand{\lbr}[1][(]{\left#1}
\newcommand{\rbr}[1][)]{\right#1}

\newcommand{\avgs}[2]{\lsbo{\lbr #1 \rbr}{#2}}

\newcommand{\txt}[1]{\qquad \text{#1} \qquad}

\newcommand{\vkn}[1][n+1]{(v-k_{#1})_+}

\newcommand{\C}{\mathcal{C}}

\newcommand{\data}{\{C_0,C_1,N,p\}}
\newcounter{whitney}
\refstepcounter{whitney}

\newcounter{ineqcounter}
\refstepcounter{ineqcounter}
\makeatletter
\def\ps@pprintTitle{%
\let\@oddhead\@empty
\let\@evenhead\@empty
\def\@oddfoot{}%
\let\@evenfoot\@oddfoot}
\makeatother
%
\usepackage{setspace}
\usepackage[titletoc,toc,page]{appendix}

%
\usepackage{pgf,tikz}
\usetikzlibrary{arrows,patterns}
\usetikzlibrary{decorations.pathreplacing}

\begin{document}

\begin{frontmatter}

\title{ Unified approach to   $C^{1,\alpha}$ regularity for quasilinear parabolic equations }

\author[myaddress]{Karthik Adimurthi\tnoteref{thanksfirstauthor}}
\cortext[mycorrespondingauthor]{Corresponding author}
\ead{karthikaditi@gmail.com and kadimurthi@tifrbng.res.in}

\author[myaddress]{Agnid Banerjee\tnoteref{thankssecondauthor}}
\ead{agnidban@gmail.com and agnid@tifrbng.res.in}

\tnotetext[thanksfirstauthor]{Supported by the Department of Atomic Energy,  Government of India, under
project no.  12-R\&D-TFR-5.01-0520}
\tnotetext[thankssecondauthor]{Supported in part by SERB Matrix grant MTR/2018/000267 and by Department of Atomic Energy,  Government of India, under
project no.  12-R \& D-TFR-5.01-0520}

\address[myaddress]{Tata Institute of Fundamental Research, Centre for Applicable Mathematics,Bangalore, Karnataka, 560065, India}

\begin{abstract}
In this paper, we are interested in obtaining a unified approach to  $C^{1,\alpha}$ estimates for weak solutions of quasilinear parabolic equations, the prototype example being
\[
    u_t - \dv (|\nabla u|^{p-2} \nabla u) = 0.
\]
without having to consider the singular and degenerate cases separately. This is achieved via  a new scaling and a delicate adaptation of the covering argument developed by E.~DiBenedetto and A.~Friedman.


\end{abstract}

\begin{keyword}
 quasilinear parabolic equations, Lipschitz estimates, $C^{1,\alpha}$ regularity, unified approach 
 \MSC[2020]  35K59 \sep 35K92 \sep 35B65 
\end{keyword}

\end{frontmatter}
\begin{singlespace}
\tableofcontents
\end{singlespace}
\section{Introduction}
\label{section1}
In this paper, we study gradient regularity of weak solutions of equations of the form 
\begin{equation}
    \label{main_eqn}
    u_t - \dv \aa(\nabla u) = 0,
\end{equation}
where the nonlinearity is modelled on the  $p$-Laplace operator. Moreover, we assume  $\aa:\RR^N \rightarrow \RR^N$ satisfying the following structure conditions for some $0 \leq s \leq 1$:
\begin{equation}
    \label{ellipticity}
    \left\{ \begin{array}{l}
                |\aa(\zeta)| + | \aa'(\zeta)| (|\zeta|^2+ s^2)^{\frac12} \leq C_1 (|\zeta|^2 + s^2)^{\frac{p-1}{2}}, \\
                \iprod{\aa'(\zeta)\eta}{\eta} \geq C_0 (|\zeta|^2 + s^2)^{\frac{p-2}{2}}|\eta|^2,
            \end{array}\right.
\end{equation}
where we have denoted $\aa'(\zeta) := \frac{d\aa(\zeta)}{d\zeta}$. 
\begin{remark}
    We believe it should be possible to  consider more general structures of the form $\aa(x,t,\zeta)$ with appropriate assumptions made regarding the behaviour with respect to $x,t$.  These assumptions are given in \cite[Section 1-(ii) of Chapter VIII]{DB93}. However, we have not explored this possibility and it would be interesting to see if it is indeed the case.
\end{remark}

Since we prove two main results, one being Lipschitz regularity and the other being $C^{1,\alpha}$ regularity, we shall discuss them separately in the following subsections. Concerning the $C^{1, \alpha}$  results in the elliptic case, we refer the reader to the well known papers \cite{Di}, \cite{Le} and \cite{To}.

\subsection{Discussion about Lipschitz regularity}

   There has been two approaches to study Lipschitz regularity for quasilinear parabolic equations, one developed in \cite{DBF} that makes use of Moser's iteration to first show that the solution $u \in W^{1,q}_{\loc}$ for any $q\in [1,\infty)$ and then adapting the DeGiorgi iteration techniques to obtain $u \in W^{1,\infty}_{\loc}$. The other approach was developed in \cite{Choe}, where only the Moser iteration was used to obtain Lipschitz regularity starting from $u\in W^{1,p}_{\loc}$ solutions.
   It is important to note that all the Lipschitz estimates have been obtained for the singular and degenerate cases using different techniques. 

   In this paper, we give the details for obtaining uniform Lipschitz estimates for the prototype equation
   \begin{equation}\label{prot}
       u_t - \dv(|\nabla u|^{p-2} \nabla u)=0,
   \end{equation}
    since the energy estimates needed for this are available in \cite[Chapter VIII]{DB93} and the calculations are more illustrative, noting that extending it to more general operators can be done in a standard way. Moreover, in this paper, we start with the a priori assumption that $u \in W^{1,q}_{\loc}$ for any $q\in [1,\infty)$ and then prove $u \in W^{1,\infty}_{\loc}$ by adapting the DeGiorgi iteration as in \cite{DBF}.  This is because, we are interested in obtaining a unified quantitative Lipschitz estimate which is useful in many applications, an important example being to study  Calderon-Zygmund type estimates.
    
    The novelty here is that we obtain Lipschitz estimates without having to differentiate between singular and degenerate cases.
   Our approach to proving this result  is  based on De Giorgi's approach by suitably adapting the ideas developed in \cite{APAMS} and combining it with \cite{DBF}.  The main idea is to note that when applying Sobolev embedding, there is still some available flexibility than what is used in \cite{DBF}, and it is this extra information that we make use of in order to obtain uniform Lipschitz estimates.  We would like to refer the reader to the recent interesting paper \cite{Cristiana} where a unified approach to Lipschitz estimate has been developed   which is instead based on Moser iteration. 
\subsection{Discussion about \texorpdfstring{$C^{1,\alpha}$}. regularity} 

   There has been a long history regarding the developed of $C^{1,\alpha}$ theory for quasilinear parabolic equations and we refer the reader to the detailed chronological development given in \cite{DB93}. The highly influential method using intrinsic scaling was developed in \cite{DBF}, \cite{DBF1} ( see also \cite{W})  to prove $C^{1,\alpha}$ regularity for quasilinear parabolic systems. This required studying the solution on cylinders whose size intrinsically depended on the solution itself. Though it must be noted that their method (see \cite[Chapter IX]{DB93}) actually does not need any intrinsic geometry, even though it is not explicitly written that way. To be consistent with existing notation, we shall also use the term 'intrinsic scaling' to denote the geometry considered in \cite{DBF}.   Subsequently, in  \cite{KM}, building upon all the previous ideas, the authors obtained a clean and elegant way of proving $C^{1,\alpha}$ H\"older regularity for quasilinear parabolic equations.
   
    \begin{remark}
        In all the discussion about $C^{1,\alpha}$ regularity, we start with the assumption that the solution is a priori Lipschitz continuous and restrict our discussion to this situation. Moreover, by $C^{1,\alpha}$ regularity, we mean that the spatial derivative $\nabla_xu$ is H\"older continuous.
    \end{remark}

  It is important to note that the standard  technique of proving $C^{1,\alpha}$ regularity for quasilinear parabolic equations required using intrinsic geometry and studying singular and degenerate cases separately. \emph{In this paper, we develop new scaling and suitably adapt the covering argument from \cite[Chapter IX]{DB93} using which  we prove $C^{1,\alpha}$ regularity without having to  differentiate the singular and degenerate cases.}  In closing,  we  refer to the recent interesting  work  \cite{ITS}  where instead using non-divergence techniques,  a   unified approach to $C^{1,\alpha}$ regularity  is developed for a  certain family of equations which includes the parabolic p-Laplace equation.  However  the class of equations studied in \cite{ITS} doesn't  cover  the general structure   \eqref{main_eqn} treated in the present work.



\section*{Acknowledgments}
The first author thanks Adi Adimurthi and Ugo Gianazza for many helpful discussions and suggestions.

\section{Preliminaries}
\label{section2}

In this section, we shall collect all the preliminary material needed in subsequent sections.  Before we recall some useful results, let us define the notion of solutions considered in this paper. In order to do this, let us first define Steklov average as follows: let $h \in (0,2T)$ be any positive number, then we define
\begin{equation*}
  u_{h}(\cdot,t) := \left\{ \begin{array}{ll}
                              \hint_t^{t+h} u(\cdot, \tau) \ d\tau \quad & t\in (-T,T-h), \\
                              0 & \text{else}.
                             \end{array}\right.
 \end{equation*}
We shall now define the notion of weak solutions to \cref{main_eqn}.
\begin{definition}[Weak solution]
\label{weak_sol}
    We say that $u \in C^0(-T,T;L^2_{\loc}(\Om)) \cap L^p(-T,T;W^{1,p}_{\loc}(\Om))$ is a weak solution of \cref{main_eqn} if,  for any $\phi \in C_c^{\infty}(\Om)$ and any $t \in (-T,T)$,  the following holds:
\begin{equation*}
  \int_{\Om \times \{t\}} \left\{ \frac{d [u]_{h}}{dt} \phi + \iprod{[\aa(x,t,\nabla u)]_{h}}{\nabla \phi} \right\} \,dx = 0 \txt{for any}0 < t < T-h.
\end{equation*}
\end{definition}

\begin{definition}[Function Space]\label{func_space}
For any $1<\tp<\infty$ and any $m > 1$, we define the following Banach spaces:
\begin{equation*}%
\begin{array}{c}
V^{m,\tp}(\Om_T) := L^{\infty}(-T,T;L^m(\Om)) \cap L^{\tp}(-T,T;W^{1,\tp}(\Om)),\\
V^{m,\tp}_0(\Om_T) := L^{\infty}(-T,T;L^m(\Om)) \cap L^{\tp}(-T,T;W^{1,\tp}_0(\Om)).
\end{array}\end{equation*}%
These function spaces have the norm 
\begin{equation*}
    \|f\|_{V^{m,\tp}(\Om_T)} := \sup_{-T < t<T} \|f(\cdot,t)\|_{L^m(\Om)} + \|\nabla f\|_{L^{\tp}(\Om_T)}.
\end{equation*}%
\end{definition}

We have the following parabolic Sobolev embedding theorem from \cite[Proposition 3.1 from Chapter I]{DB93}.
\begin{lemma}
\label{par_sob_emb}
For any $v \in V^{2,\tp}_0(\Om_T)$, there exists a constant $C = C(N,\tp)$ such that
\[
\iint_{\Om_T} |v(x,t)|^{\tq} \,dz \apprle C \lbr \sup_{0<t<T} \int_{\Om} |v(x,t)|^2 \,dx \rbr^{\frac{\tp}{N}} \lbr \iint_{\Om_T} |\nabla v(x,t)|^{\tp} \,dz \rbr,
\]
where $\tq:= \tp \frac{N+2}{N}$.
\end{lemma}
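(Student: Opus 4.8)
The plan is to prove the estimate slice by slice in time and then integrate, the only genuine input being a Gagliardo--Nirenberg interpolation inequality on $\RR^N$.

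Since $v \in V^{2,\tp}_0(\Om_T)$, for a.e.\ $t \in (-T,T)$ we have $v(\cdot,t) \in W^{1,\tp}_0(\Om)$ with $v(\cdot,t) \in L^2(\Om)$, and the maps $t \mapsto \|\nabla v(\cdot,t)\|_{L^{\tp}(\Om)}$ and $t \mapsto \|v(\cdot,t)\|_{L^2(\Om)}$ are measurable. Extending $v(\cdot,t)$ by zero outside $\Om$, I would apply the Gagliardo--Nirenberg inequality on $\RR^N$:
\[
\|v(\cdot,t)\|_{L^{\tq}(\Om)} \apprle C(N,\tp)\, \|\nabla v(\cdot,t)\|_{L^{\tp}(\Om)}^{\al}\, \|v(\cdot,t)\|_{L^{2}(\Om)}^{1-\al},
\]
valid for the exponent $\al \in [0,1]$ forced by the scaling relation
\[
\frac{1}{\tq} = \al\lbr \frac{1}{\tp} - \frac{1}{N}\rbr + (1-\al)\frac12 .
\]
A direct computation shows that with $\tq = \tp\frac{N+2}{N}$ this relation is satisfied precisely by $\al = \frac{N}{N+2} \in (0,1)$, so the inequality is admissible for every $\tp \in (1,\infty)$.

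Next I would raise this inequality to the power $\tq$. Since $\al\tq = \tp$ and $(1-\al)\tq = \frac{2\tp}{N}$, this yields for a.e.\ $t$
\[
\int_{\Om} |v(x,t)|^{\tq}\,dx \apprle C(N,\tp)\, \|\nabla v(\cdot,t)\|_{L^{\tp}(\Om)}^{\tp}\, \|v(\cdot,t)\|_{L^{2}(\Om)}^{\frac{2\tp}{N}} .
\]
Bounding $\|v(\cdot,t)\|_{L^{2}(\Om)}^{2\tp/N} \le \lbr \sup_{-T<\ta<T} \int_{\Om}|v(x,\ta)|^2\,dx\rbr^{\tp/N}$, pulling this time-independent factor out of the $t$-integral, and using $\int_{-T}^{T}\|\nabla v(\cdot,t)\|_{L^{\tp}(\Om)}^{\tp}\,dt = \iint_{\Om_T}|\nabla v(x,t)|^{\tp}\,dz$, the claimed inequality follows with $C = C(N,\tp)$.

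The one point requiring care is the Gagliardo--Nirenberg step across the whole range $\tp \in (1,\infty)$. For $\tp < N$ it is the interpolation of $L^{\tq}$ between $L^2$ and the Sobolev target $L^{\tp^*}$, $\tp^* = \frac{N\tp}{N-\tp}$, which needs $2 \le \tq \le \tp^*$, i.e.\ $\tp \ge \frac{2N}{N+2}$ (harmless in our application, where $\tp$ is comparable to the structural exponent of the equation); for $\tp = N$ one uses $W^{1,N}_0 \hookrightarrow L^r$ for all finite $r$; and for $\tp > N$ one combines $W^{1,\tp}_0 \hookrightarrow L^{\infty}$ with the $L^2$ bound. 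In every case the constant depends only on $N$ and $\tp$. Alternatively — and this is what the statement already signals — one may simply invoke \cite[Proposition~3.1, Chapter~I]{DB93} directly.
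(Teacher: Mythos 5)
Your proposal is correct and coincides with the standard argument behind this lemma: the paper offers no proof of its own, citing \cite[Proposition 3.1, Chapter I]{DB93}, whose proof is precisely this slice-wise Gagliardo--Nirenberg interpolation with $\al=\tfrac{N}{N+2}$ followed by pulling the $\sup$-in-time factor out of the time integral. One minor remark: the restriction $\tilde p\ge \tfrac{2N}{N+2}$ you flag is not actually needed, since when $\tilde p^{*}<2$ the exponent $\tilde q$ lies between $\tilde p^{*}$ and $2$ and the same H\"older interpolation applies with the roles of the two exponents reversed (and in the paper the lemma is only invoked with $\tilde p=2$ anyway).
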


The next result that we need is a parabolic Sobolev embedding, see \cite[Corollary 3.1 of Chapter I]{DB93} for the details.
    \begin{lemma}\label{sobolev-poincare}
    Let $1<s<\infty$ and  $v \in V_0^{s}(Q)$ in some cylinder $Q=B \times I$, then
    \begin{equation*}
    \|v \|_{L^{s}(Q)}^{s} \leq C \abs{\{ |v| > 0\}}^{\frac{s}{N+s}} \|v \|_{V^{s}(Q)}^{s}.
    \end{equation*}
    \end{lemma}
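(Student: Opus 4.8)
The plan is to derive the estimate from a single application of Hölder's inequality combined with the parabolic Gagliardo--Nirenberg--Sobolev embedding at the $V^s=V^{s,s}$ scale; throughout, $\abs{\{|v|>0\}}$ denotes the $(N{+}1)$-dimensional Lebesgue measure of $\{(x,t)\in Q:v(x,t)\neq 0\}$ and $\|v\|_{V^s(Q)}=\sup_{t}\|v(\cdot,t)\|_{L^s(B)}+\|\nabla v\|_{L^s(Q)}$. First I would write $|v|^s=|v|^s\chi_{\{|v|>0\}}$ and apply Hölder's inequality on $Q$ with the conjugate exponents $\tfrac{N+s}{N}$ and $\tfrac{N+s}{s}$ to obtain
\begin{equation*}
  \iint_Q|v|^s\,dz\;\le\;\left(\iint_Q|v|^{s\frac{N+s}{N}}\,dz\right)^{\frac{N}{N+s}}\abs{\{|v|>0\}}^{\frac{s}{N+s}}.
\end{equation*}
Thus it suffices to prove the embedding bound $\iint_Q|v|^{q}\,dz\le C\|v\|_{V^s(Q)}^{q}$ with $q:=s\tfrac{N+s}{N}$, since raising that bound to the power $\tfrac{N}{N+s}$ produces exactly $C\|v\|_{V^s(Q)}^{s}$ on the right, which is the claim; no further manipulation is needed because the exponents match identically.

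For the embedding bound I would argue slice-wise in time. For a.e.\ $t\in I$ the function $v(\cdot,t)$ lies in $W_0^{1,s}(B)$, so extending it by zero to $\RR^N$ and invoking the Gagliardo--Nirenberg inequality gives
\begin{equation*}
  \|v(\cdot,t)\|_{L^{q}(B)}\;\le\;C\,\|\nabla v(\cdot,t)\|_{L^s(B)}^{\theta}\,\|v(\cdot,t)\|_{L^s(B)}^{1-\theta},\qquad \theta=\frac{N}{N+s}\in(0,1),
\end{equation*}
where the value of $\theta$ is forced by the scaling identity $\tfrac1q=\theta\big(\tfrac1s-\tfrac1N\big)+(1-\theta)\tfrac1s$. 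Raising to the power $q$, using $\theta q=s$ and $(1-\theta)q=\tfrac{s^2}{N}$, then integrating in $t$ and pulling $\|v(\cdot,t)\|_{L^s(B)}^{s^2/N}\le\big(\sup_{t}\int_B|v|^s\,dx\big)^{s/N}$ out of the time integral, yields $\iint_Q|v|^{q}\,dz\le C\big(\sup_{t}\int_B|v|^s\,dx\big)^{s/N}\iint_Q|\nabla v|^s\,dz$. Finally $\big(\sup_t\int_B|v|^s\big)^{s/N}\iint_Q|\nabla v|^s\le\|v\|_{V^s(Q)}^{q}$ by the elementary inequality $a^{s^2/N}b^{s}\le(a+b)^{s^2/N+s}$ together with $\tfrac{s^2}{N}+s=q$, which gives the desired embedding bound and hence the lemma. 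This step is essentially the refinement of \cref{par_sob_emb} in which the time-slice norm sits in $L^s$ rather than $L^2$.

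I do not expect a genuine obstacle; the one point to keep track of is that the Gagliardo--Nirenberg step — and therefore the whole argument — is uniform over the full range $1<s<\infty$, which is what lets us avoid any singular/degenerate split: the exponent $q$ is always admissible in the Gagliardo--Nirenberg inequality and $\theta\in(0,1)$ for every $s>0$, while for $s\ge N$ one simply uses the corresponding Sobolev embedding of $W_0^{1,s}(B)$ (into $L^r(B)$ for all $r<\infty$, resp.\ into $L^\infty(B)$). A routine density argument reduces the computation to smooth, compactly supported $v$, legitimizing the slice-wise evaluation and the integration in $t$. In fact the statement is exactly \cite[Corollary 3.1 of Chapter I]{DB93}, and the argument above merely records why it holds at the $V^{s,s}$ scale with the exponent $\tfrac{s}{N+s}$.
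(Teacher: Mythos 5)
Your argument is correct, and it is essentially the proof behind the paper's citation: the paper does not prove this lemma at all (it is recalled verbatim from \cite[Corollary 3.1 of Chapter I]{DB93}), and the standard derivation there is exactly your route — H\"older's inequality with the indicator of $\{|v|>0\}$ at exponents $\tfrac{N+s}{N},\tfrac{N+s}{s}$, followed by the parabolic embedding at the $V^{s,s}$ scale, which you re-derive slice-wise via Gagliardo--Nirenberg with the exponent bookkeeping ($\theta q=s$, $(1-\theta)q=\tfrac{s^2}{N}$, $q\tfrac{N}{N+s}=s$) all checking out. The only blemish is the parenthetical fallback for $s\ge N$: using $W^{1,s}_0(B)\hookrightarrow L^\infty(B)$ directly would place the power $q-s>s$ on the gradient, which cannot then be integrated in time using only the $V^{s}$ norm; but this aside is unnecessary, since the Gagliardo--Nirenberg step with $\theta=\tfrac{N}{N+s}\in(0,1)$ and finite $q$ is valid for the whole range $1<s<\infty$, so your proof stands as written.
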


Let us recall a form of  Poincar{\`e} inequality, see  \cite[Lemma 2.2 of Chapter I]{DB93} for the details.
\begin{lemma}\label{Poincare}
    Let $v \in W^{1,1}\left(B_{\rho}\right) \cap C^0\left(B_{\rho}\right)$ for some $\rho>0$ and  let $k, l \in \RR$ with $k<l$. Then there exists a constant $\gamma = \gamma(N,p)$,  independent of $k,l,v, \rho$, such that
\begin {equation*}
     (l-k)\left| B_{\rho}\cap \{v > l \} \right|
    \leq \gamma \frac{\rho^{N+1}}{\left| B_{\rho}\cap \{v \le k\}\right|}
    \int_{ B_{\rho}\cap \{ k < v < l\}} |\nabla v| \,dx .
\end {equation*}
    \end{lemma}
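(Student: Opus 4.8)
The plan is to reduce the estimate to a Riesz-potential bound for the truncation of $v$ between the levels $k$ and $l$, which is the classical route for Poincar\'e-type inequalities on level sets. Write $A_k := B_\rho\cap\{v\le k\}$ and $A_l := B_\rho\cap\{v>l\}$; if $|A_k| = 0$ the right-hand side is infinite and there is nothing to prove, so assume $|A_k|>0$. Introduce the truncation
\[
  w := \min\{(v-k)_+,\, l-k\},
\]
which lies in $W^{1,1}(B_\rho)\cap C^0(B_\rho)$, vanishes identically on $A_k$, equals $l-k$ on $A_l$, and satisfies $\nabla w = \nabla v\,\chi_{\{k<v<l\}}$ a.e. In particular $(l-k)|A_l| = \int_{A_l} w\,dx$, so it suffices to estimate $\int_{A_l} w$.

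Since $w\equiv 0$ on $A_k$, for a.e.\ $x\in B_\rho$ one may write $w(x) = \frac{1}{|A_k|}\int_{A_k}\bigl(w(x)-w(y)\bigr)\,dy$. Extending $\nabla w$ by zero outside $B_\rho$ and using the fundamental theorem of calculus along the segment joining $x$ to $y$ — legitimate for $W^{1,1}$ functions after a standard mollification/approximation, with the continuity of $v$ guaranteeing that the level sets and the truncation behave well — gives $|w(x)-w(y)|\le\int_0^{|x-y|}|\nabla w|\bigl(x+r\omega\bigr)\,dr$ with $\omega=(y-x)/|y-x|$. Because $w\ge 0$ and $w\equiv 0$ on $A_k$, we have $w(x)=|w(x)-w(y)|$ for $y\in A_k$; integrating in $y$, passing to polar coordinates centred at $x$, interchanging the $r$- and radial integrations, and using $|x-y|\le 2\rho$ on $B_\rho$, one arrives at the bound
\[
  w(x)\ \le\ \frac{(2\rho)^N}{N\,|A_k|}\int_{B_{2\rho}(x)}\frac{|\nabla w(z)|}{|z-x|^{N-1}}\,dz \qquad\text{for a.e.\ }x\in B_\rho.
\]

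Finally, integrate this inequality over $x\in A_l$, on which $w\equiv l-k$, apply Fubini (every integrand is nonnegative) to move the $x$-integration inside, and bound $\int_{A_l\cap B_{2\rho}(z)}|z-x|^{1-N}\,dx\le\int_{B_{2\rho}(z)}|z-x|^{1-N}\,dx = c(N)\,\rho$. Using $\nabla w = \nabla v\,\chi_{\{k<v<l\}}$ this yields
\[
  (l-k)\,|A_k|\,|A_l|\ \le\ \gamma\,\rho^{N+1}\int_{B_\rho\cap\{k<v<l\}}|\nabla v|\,dz
\]
for a dimensional constant $\gamma$, and dividing by $|A_k|$ gives the claim. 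The only genuine care needed is the rigorous justification of the segment representation for merely $W^{1,1}$ data — handled by mollification together with the absolute continuity on lines of Sobolev functions — and keeping the Fubini interchanges clean; the constant in fact depends only on $N$, which is compatible with the stated $\gamma(N,p)$.
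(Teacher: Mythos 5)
Your proof is correct, and it is essentially the standard argument: the paper does not prove this lemma itself but cites \cite[Lemma 2.2 of Chapter I]{DB93}, whose proof is exactly this truncation-plus-Riesz-potential scheme (pointwise representation over the set $\{v\le k\}$, integration over $\{v>l\}$, Fubini, and the kernel bound giving the factor $\rho^{N+1}$). Your observation that the constant depends only on $N$ is also consistent with the statement, so nothing further is needed.
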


Next we recall a well known iteration lemma, see \cite[Lemma 4.1 of Chapter I]{DB93} for the details.
\begin{lemma}
    \label{iteration}
    Let $\{X_n\}$ for  $n=0,1,2,\ldots,$ be a sequence of positive numbers, satisfying the recursive inequalities
    \begin {equation*}
    X_{n+1} \leq C b^{n}  X_{n}^{1+\alpha},
    \end {equation*}
    where $C,b >1$ and $\alpha>0$ are given numbers. If
    \[
    X_0\leq C^{-\frac{1}{\alpha}}b^{-\frac{1}{\alpha^2}},
    \]
    then $\{X_n\}$ converges to zero as $n\rightarrow \infty$.
\end{lemma}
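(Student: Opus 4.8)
The plan is to prove the stronger quantitative bound $X_n \le X_0\, b^{-n/\alpha}$ for all $n \ge 0$; since $b > 1$, the right-hand side tends to $0$ and hence so does $X_n$. The correct decay rate is found by balancing the growing factor $b^n$ against the contraction supplied by the superlinear exponent $1+\alpha > 1$: a pure geometric ansatz $X_n \le X_0\,\xi^n$ propagates through the recursion exactly when $\xi = b^{-1/\alpha}$, and the smallness hypothesis on $X_0$ is precisely what makes this propagation close.

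First I would set up the induction on $n$. The case $n = 0$ is immediate. Assuming $X_n \le X_0\, b^{-n/\alpha}$, I substitute into the recursive inequality and use the elementary identity $n - n(1+\alpha)/\alpha = -n/\alpha$ to obtain
\[
X_{n+1} \le C b^n X_n^{1+\alpha} \le C b^n (X_0\, b^{-n/\alpha})^{1+\alpha} = C X_0^{1+\alpha}\, b^{-n/\alpha}.
\]
The inductive step is then completed by verifying $C X_0^{1+\alpha}\, b^{-n/\alpha} \le X_0\, b^{-(n+1)/\alpha}$; dividing through by $X_0\, b^{-n/\alpha} > 0$, this is equivalent to $C X_0^{\alpha} \le b^{-1/\alpha}$, i.e. $X_0 \le C^{-1/\alpha} b^{-1/\alpha^2}$, which is exactly the assumed bound on $X_0$. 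Hence $X_n \le X_0\, b^{-n/\alpha} \to 0$ as $n \to \infty$.

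There is no serious obstacle here; the only care needed is in the exponent bookkeeping and in recognizing that the smallness threshold on $X_0$ must be invoked in the inductive step, where it is used for every $n$, rather than merely at $n = 0$. An alternative would be to linearize by taking logarithms, writing $Y_n := \log X_n$ and solving the linear recursion $Y_{n+1} \le \log C + n \log b + (1+\alpha) Y_n$, but the direct induction is shorter and avoids worrying about the signs of logarithms of small quantities.
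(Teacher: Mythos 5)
Your induction is correct: the exponent bookkeeping checks out, and the smallness hypothesis on $X_0$ is exactly what closes the inductive step, yielding $X_n \le X_0\, b^{-n/\alpha} \to 0$. The paper does not prove this lemma itself but cites \cite[Lemma 4.1 of Chapter I]{DB93}, whose standard proof is precisely this induction on the geometric ansatz, so your argument is essentially the same as the referenced one.
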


Let us recall a second important iterative estimate proved in \cite[Lemma 4.3 of Chapter I]{DB93}
\begin{lemma}\label{lemma_iter_2}
    Let $\{Y_n\}_{n=1}^{\infty}$ be a sequence of equibounded positive numbers satisfying the recursive inequalities 
    \[
        Y_n \leq C b^n Y_{n+1}^{1-\al},
    \]
    where $C,b>1$ and $\al \in (0,1)$ are given constants, then the following holds:
    \[
        Y_0 \leq  \lbr \frac{2C}{b^{1-\frac{1}{\al}}}\rbr^{\frac{1}{\al}}.
    \]

\end{lemma}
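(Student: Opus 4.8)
The plan is to iterate the recursion downward from $Y_0$, track the exponents of $C$ and $b$ as convergent series, and absorb the leftover factor using equiboundedness of $\{Y_n\}$. Concretely, I would first prove by induction on $n\ge 0$ that
\[
Y_0 \;\le\; C^{\,\sum_{i=0}^{n}(1-\al)^i}\; b^{\,\sum_{i=1}^{n} i(1-\al)^i}\; Y_{n+1}^{(1-\al)^{n+1}}.
\]
The base case $n=0$ is exactly the hypothesis $Y_0\le C Y_1^{1-\al}$; for the inductive step one substitutes $Y_{n+1}\le C b^{n+1} Y_{n+2}^{1-\al}$ into the right-hand side and raises it to the power $(1-\al)^{n+1}$, after which the exponent of $C$ becomes $\sum_{i=0}^{n+1}(1-\al)^i$ and that of $b$ becomes $\sum_{i=1}^{n+1} i(1-\al)^i$, i.e.\ the same expression with $n+1$ in place of $n$.

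Next, since $0<1-\al<1$, the partial sums of these series are dominated by the full sums: $\sum_{i=0}^{n}(1-\al)^i\le \frac1\al$ and, using $\sum_{i\ge1} i\,x^i = x/(1-x)^2$, $\sum_{i=1}^{n} i(1-\al)^i\le \frac{1-\al}{\al^2}$. Because $C>1$ and $b>1$, replacing the exponents by these upper bounds only weakens the inequality, so for every $n$ one has $Y_0 \le C^{1/\al}\, b^{(1-\al)/\al^2}\, Y_{n+1}^{(1-\al)^{n+1}}$. Setting $M:=\max\{1,\sup_n Y_n\}<\infty$ — this is the only place equiboundedness enters — one gets $Y_{n+1}^{(1-\al)^{n+1}}\le M^{(1-\al)^{n+1}}\to 1$ as $n\to\infty$, since $(1-\al)^{n+1}\to 0$; letting $n\to\infty$ therefore yields $Y_0 \le C^{1/\al}\, b^{(1-\al)/\al^2}$.

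This already gives the claimed estimate: since $-(1-\tfrac1\al)/\al=(1-\al)/\al^2$ we have $\left(\frac{2C}{b^{1-1/\al}}\right)^{1/\al}=2^{1/\al}\,C^{1/\al}\,b^{(1-\al)/\al^2}\ge C^{1/\al}\,b^{(1-\al)/\al^2}\ge Y_0$, the factor $2^{1/\al}$ being harmless slack inherited from the formulation in \cite{DB93}. There is no real obstacle here: the one substantive use of the hypotheses is the passage to the limit, where equiboundedness is exactly what keeps the residual factor $Y_{n+1}^{(1-\al)^{n+1}}$ from escaping to infinity. (One cosmetic point: the statement lists the family as $\{Y_n\}_{n=1}^{\infty}$ yet concludes about $Y_0$; I would simply read the recursion as valid for all $n\ge 0$.)
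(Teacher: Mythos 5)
Your proof is correct and complete. Note that the paper itself does not prove this lemma; it is simply recalled from \cite[Lemma 4.3 of Chapter I]{DB93}, so there is no in-paper argument to compare against. Your induction $Y_0 \le C^{\sum_{i=0}^{n}(1-\alpha)^i}\, b^{\sum_{i=1}^{n} i(1-\alpha)^i}\, Y_{n+1}^{(1-\alpha)^{n+1}}$ is verified correctly at each step (raising a positive inequality to the positive power $(1-\alpha)^{n+1}$ is legitimate), the series bounds $\sum_{i\ge 0}(1-\alpha)^i = 1/\alpha$ and $\sum_{i\ge 1} i(1-\alpha)^i = (1-\alpha)/\alpha^2$ are right, the replacement of the partial sums by these limits is justified precisely because $C,b>1$, and equiboundedness enters exactly where it must, to force $Y_{n+1}^{(1-\alpha)^{n+1}} \le M^{(1-\alpha)^{n+1}} \to 1$. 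The final identity $-\left(1-\tfrac{1}{\alpha}\right)/\alpha = (1-\alpha)/\alpha^2$ is also correct, so you in fact obtain the sharper bound $Y_0 \le C^{1/\alpha} b^{(1-\alpha)/\alpha^2}$, of which the stated estimate is a weakening by the harmless factor $2^{1/\alpha}$; that factor is an artifact of the contradiction-style argument commonly used for this lemma (assume $Y_0$ exceeds the bound and show $Y_n$ must blow up, contradicting equiboundedness), which your direct forward iteration avoids. Your reading of the indexing, namely that the recursion holds for all $n \ge 0$, is the intended one; otherwise the conclusion about $Y_0$ would be vacuous.
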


%

\subsection{Notation}
We now list some  notations that will be used throughout the paper: 
\begin{enumerate}[(i)]
    
 \item\label{not1} We shall denote a point in $\RR^{N+1}$ by $z = (x,t) \in \RR^N \times \RR$.
 
 \item We shall use the notation $Q_{a,b}(x_0,t_0)$ to denote a parabolic cylinder of the form $B_a(x_0) \times (t_0-b,t_0+b)$. 
 \item\label{not11} Henceforth, we shall fix a cylinder $Q_0 = B_{R_0} \times (-R_0^2,R_0^2)$ centered at $(0,0)$ and its scaled version $4Q_0$.
  \item\label{not12} We shall denote the boundary of $4Q_0$ by $$\Gamma  = \lbr[[]B_{4R_0} \times\left\{t=-(4R_0)^2\right\}\rbr[]] \bigcup\lbr[[] B_{4R_0} \times\left\{t=(4R_0)^2\right\} \rbr[]]\bigcup \lbr[[]\pa B_{4R_0} \times \lbr-(4R_0)^2,(4R_0)^2\rbr\rbr[]].$$
 
 \item\label{not2} Let $\rho >0$, $\la \geq 1$ and $R_0 >0$ be fixed numbers, then for a given point $z_0 = (x_0,t_0) \in \RR^{N+1}$, we define the following cylinders:
 \begin{equation*}
Q_{\rho} (x_0,t_0)  :=  B_{\rho}(x_0) \times (t_0-\rho^2, t_0+\rho^2) \txt{and}
Q_{\rho}^{\la} (x_0,t_0)  :=  B_{\la^{-1}\rho}(x_0) \times (t_0-\la^{-p}\rho^2, t_0+\la^{-p}\rho^2).
\end{equation*}

\item\label{not3} Let $\la \geq 1$ be given, then for given two points $z_1 = (x_1,t_1) \in \RR^{N+1}$ and $z_2 = (x_2,t_2) \in \RR^{N+1}$, we need the following metrics:
 \begin{equation*}
d (z_1, z_2)  := \max\{ |x_1 - x_2|,|t_1 - t_2|^{1/2}\} \txt{and} 
d (z_1, \mathcal{K}) :=  \inf_{z_2 \in \mathcal{K}} d (z_1, z_2).
\end{equation*}

\item\label{not4} Given any exponent $q \in (1,\infty)$, we shall denote $q' = \frac{q}{q-1}$ to be it's conjugate exponent.

\item $C \bigwedge 1 = \max \{ C, 1\}$.

 \item\label{not_par_bnd} For a given space-time cylinder $Q = B_R \times (a,b)$, we denote the parabolic boundary of $Q$ to be the union of the bottom and the lateral boundaries, i.e., $\pa_pQ = B_r \times \{t=a\} \bigcup \pa B_R \times (a,b)$.
 
\end{enumerate}
\section{Main Theorems}
\label{section3}
The first theorem we prove gives a rough Lipschitz bound. 
\begin{theorem}
    \label{rough_lip}
    Let $\frac{2N}{N+2} < p < \infty$ and $u$ be a local weak solution of \cref{lip_eqn} satisfying $|\nabla u| \in L^q_{\loc}$ for all $q \in [1,\infty)$, then $u$ is Lipschitz continuous. 
\end{theorem}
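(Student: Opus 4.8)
The plan is to run a De Giorgi iteration for the gradient $v:=\abs{\nabla u}$ on the rescaled cylinders $Q_{\rho}^{\la}$ from the list of notation, with the scale parameter $\la\ge 1$ playing the role of the Lipschitz bound to be established, and to exploit the a priori integrability $v\in L^q_{\loc}$ to close the iteration with constants that do not degenerate as $p$ ranges over $\bigl(\tfrac{2N}{N+2},\infty\bigr)$. The structural remark behind this choice of cylinders is that $(x,t)\mapsto\bigl(\la(x-x_0),\la^{p}(t-t_0)\bigr)$ maps $Q_{\rho}^{\la}(z_0)$ onto $Q_{\rho}(0,0)$ and sends a solution of \cref{prot} to a solution of the same equation whose gradient is multiplied by $\la^{-1}$; thus $Q_{\rho}^{\la}$ is exactly the geometry under which \cref{prot} is scale invariant, and no intrinsic, solution-dependent coupling --- hence no singular/degenerate dichotomy --- is forced upon us. I work on the cylinders $Q_0\subset 2Q_0\subset 4Q_0$ and aim, for each $z_0\in Q_0$, at a bound $\sup_{Q_{R_0/2}^{\la}(z_0)}v\le\la$ with $\la$ depending only on $N,p,q,R_0$ and $\iint_{2Q_0}v^{q}\,dz$; covering $Q_0$ by such cylinders then yields $u\in W^{1,\infty}_{\loc}$ with a quantitative estimate.

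\textbf{Step 1 (differentiated equation and energy estimates).} Using $v\in L^q_{\loc}$ for every $q<\infty$, a difference-quotient argument on the (Steklov-averaged) weak formulation of \cref{prot} shows that each $\partial_k u$ is a weak solution of a linear parabolic equation with principal coefficients comparable to $\abs{\nabla u}^{p-2}$. Testing with $\zeta^{2}\,\partial_k u\,(v^{2}-\ell^{2})_+$, summing over $k$ and invoking the structure conditions \cref{ellipticity}, one obtains for every level $\ell\ge 0$ and every pair of concentric cylinders $Q_{\rho'}^{\la}\subset Q_{\rho}^{\la}(z_0)$ --- with $\zeta$ a cutoff between them --- a Caccioppoli-type inequality bounding
\[
\sup_{t}\int_{B}(v^{2}-\ell^{2})_+^{2}\,\zeta^{2}\,dx \;+\; \iint \abs{\nabla u}^{p-2}\bigl|\nabla(v^{2}-\ell^{2})_+\bigr|^{2}\zeta^{2}\,dz
\]
in terms of $\iint(v^{2}-\ell^{2})_+^{2}\bigl(\abs{\nabla\zeta}^{2}+\abs{\zeta\,\partial_t\zeta}\bigr)\,dz$ plus lower-order contributions, with constants depending only on $N$ and $p$; the scale invariance of $Q_{\rho}^{\la}$ is precisely what keeps these constants free of any hidden dependence on $\ell$, and for $p<2$ the weight $\abs{\nabla u}^{p-2}$ is absorbed by passing to a nondegenerate power of $(v-\ell)_+$ that the energy term controls.

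\textbf{Step 2 (recursion).} Fix $\la\ge 1$ and set $\ell_n:=\tfrac{\la}{2}\bigl(2-2^{-n}\bigr)$, $\rho_n:=\tfrac{R_0}{2}\bigl(1+2^{-n}\bigr)$, $Q_n:=Q_{\rho_n}^{\la}(z_0)$, and
\[
Y_n:=\frac{1}{\la^{2}\abs{Q_n}}\iint_{Q_n}(v-\ell_n)_+^{2}\,dz .
\]
Applying the parabolic embedding \cref{par_sob_emb} to $(v-\ell_n)_+\zeta_n$ (with $\zeta_n$ a cutoff between $Q_{n+1}$ and $Q_n$), combining with the energy inequality of Step 1, and --- this is the crucial step --- estimating $\abs{\{v>\ell_n\}\cap Q_n}$ by H\"older against the a priori bound $\iint_{2Q_0}v^{q}\,dz<\infty$ (with $q$ large) \emph{rather than by extracting the full Sobolev gain}, one reaches a recursion
\[
Y_{n+1}\ \le\ C\,b^{\,n}\,Y_n^{1+\al},\qquad C,b>1,\quad\al>0,
\]
with $C,b,\al$ depending only on $N,p,q$ and, thanks to the scale-invariant geometry, bounded uniformly as $p\to 2^{\pm}$ and as $p\downarrow\tfrac{2N}{N+2}$; the hypothesis $p>\tfrac{2N}{N+2}$, equivalently $p\tfrac{N+2}{N}>2$, is the standard range in which such a gradient iteration closes. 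This step is the main obstacle: one must keep careful track of the $\abs{\nabla u}^{p-2}$ weight in the energy term, of the mismatch between the spatial ($\la^{-1}$) and temporal ($\la^{-p}$) scalings when distributing the cutoff, and of the exact power at which the $L^q$ norm is spent, so that no constant blows up near $p=2$ or near the critical exponent --- this is precisely where the extra room in the Sobolev embedding, in the spirit of \cite{APAMS}, combined with the De Giorgi scheme of \cite{DBF}, does the work.

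\textbf{Step 3 (iteration and conclusion).} By \cref{iteration}, $Y_n\to 0$ --- i.e. $v\le\la$ a.e. on $Q_{R_0/2}^{\la}(z_0)$ --- as soon as $Y_0\le C^{-1/\al}b^{-1/\al^{2}}$. Since $Y_0\le \dfrac{1}{\la^{2}\abs{Q_{R_0}^{\la}}}\,\abs{Q_{R_0}^{\la}}^{1-\frac2q}\Bigl(\iint_{2Q_0}v^{q}\,dz\Bigr)^{\frac2q}$ (note $Q_{R_0}^{\la}(z_0)\subset 2Q_0$ for $z_0\in Q_0$ and $\la\ge 1$) and $\abs{Q_{R_0}^{\la}}\simeq\la^{-(N+p)}R_0^{N+2}$, one gets $Y_0\simeq\la^{\frac{2(N+p)}{q}-2}R_0^{-\frac{2(N+2)}{q}}\bigl(\iint_{2Q_0}v^{q}\,dz\bigr)^{\frac2q}$, which falls below the threshold once $q>N+p$ and $\la$ exceeds an explicit constant depending on $N,p,q,R_0$ and $\iint_{2Q_0}v^{q}\,dz$. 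Since $v\le\la$ a.e. on the neighbourhood $Q_{R_0/2}^{\la}(z_0)$ of $z_0$ for every $z_0\in Q_0$, we conclude $v\le\la$ a.e. on $Q_0$, i.e. $\sup_{Q_0}\abs{\nabla u}\le\la$; translating the argument to arbitrary subcylinders of the domain gives $u\in W^{1,\infty}_{\loc}$ with the announced quantitative bound, completing the proof.
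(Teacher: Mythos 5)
Your overall strategy (a De Giorgi iteration at the level of $v=|\nabla u|$, closed by spending the a priori $L^q$ integrability and by choosing the level/scale parameter large at the end) is in the same family as the paper's argument, and your Step 3 numerology for $Y_0$ is sound. However, there is a genuine gap exactly where you flag "the main obstacle": the recursion of Step 2 is asserted, not derived, and as stated it does not follow from the energy inequality of Step 1. Testing the differentiated equation with $\zeta^2\,\partial_k u\,(v^2-\ell^2)_+$ produces an energy term carrying the weight $|\nabla u|^{p-2}$. For $p\geq 2$ this weight is bounded below on the superlevel sets $\{v>\ell_n\}$ (since $\ell_n\gtrsim\la$, or $\gtrsim 1/2$ after your rescaling), but for $p<2$ the weight is \emph{decreasing} in $v$, so a lower bound on $\{v>\ell_n\}$ would require precisely the sup bound you are trying to prove; the Sobolev step in your Step 2 therefore cannot be run from the stated Caccioppoli inequality. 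Your parenthetical remedy ("passing to a nondegenerate power of $(v-\ell)_+$ that the energy term controls") is exactly the nontrivial content of the theorem, and it is also inconsistent with the rest of your scheme: once the power is modified, the iterated quantity can no longer be the plain normalized square $Y_n=\la^{-2}|Q_n|^{-1}\iint(v-\ell_n)_+^2$, and the recursion, the claimed uniformity of $C,b,\al$ in $p$ and $\la$, and the Step 3 threshold computation would all have to be redone. In the paper this crux is handled concretely: the energy estimate \cref{energy} is applied with the weighted level function $f(v)=v^{\al}(v-k_{n+1})_+^{\be}$, with $\ga=\tfrac{4}{N+2}$ chosen in \cref{def_const_1} so that $p-2+\ga>0$ for all $p>\tfrac{2N}{N+2}$; this yields \cref{energy_lemmma}, in which the degenerate factor appears only through the harmless positive power $k_{n+1}^{\,p-2+\ga}$, and the quantity iterated is $\iint_{Q_n}(v-k_n)_+^{\al+\be+2-\ga}$ rather than the square.

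A secondary point of comparison: the paper's rough bound does not use intrinsic cylinders at all. It works on ordinary shrinking cylinders $Q_{(\rho_n,\tht_n)}$, spends the hypothesis $v\in L^q_{\loc}$ through a H\"older split of the right-hand side of \cref{energy_lemma_est} (producing factors $\|v\|_{2(p+\al+\be)}^{p+\al+\be}\,Y_n^{1/2}$, etc.), obtains $Y_{n+1}\leq \mathbf{B}_1 k^{-\mathbb{E}}\mathbf{D}^n Y_n^{1+\frac{1}{N+2}}$ with $\mathbb{E}>0$, and closes via \cref{iteration} by taking the level $k$ large. Your intrinsic geometry $Q_\rho^\la$ is not wrong, but the scale invariance it encodes does not by itself remove the singular/degenerate dichotomy: that dichotomy lives in the weight $|\nabla u|^{p-2}$ of the energy estimate, and it is the exponent bookkeeping above (not the cylinder geometry) that resolves it. Until you carry out your Step 1/Step 2 with an explicit choice of powers playing the role of $\al,\be,\ga$ and verify that the resulting recursion closes with constants independent of $\la$ (and stable as $p\to 2^{\pm}$), the proof is incomplete.
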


    The proof of \cref{lipschitz} (and \cref{cor1}, \cref{cor2}) makes use of the fact that $|\nabla u| \in L^{\infty}$ to obtain \cref{5.17} and \cref{5.18}. Thus we first prove a rough estimate in \cref{rough_lip} which gives that the gradient is bounded and then we use this fact to obtain improved and optimal quantitative estimates which are as follows.

\begin{theorem}\label{lipschitz}
    Let $\frac{2N}{N+2} < p < \infty$ and $u$ be a local weak solution of \cref{lip_eqn} satisfying $|\nabla u| \in L^{q}_{\loc}$ for all $q \in (0,\infty)$, then the following quantitative bound holds: For any $\sigma \in (0,1)$, $\ve \in (0,1)$ and parabolic cylinder $Q_{\rho,\tht} = B_{\rho} \times (-\tht,\tht)$,  there holds 
    \[
      \sup_{Q_{(\sigma\rho,\sigma\tht)}}|\nabla u| \leq  \lbr  2^{\frac{1}{\Sigma}}\mathbb{B}^{\frac{N+2}{2}}\lbr \iint_{Q_{(\rho,\tht)}} |\nabla u|^{p+\ve} \ dz\rbr^{\frac{2}{N+2}} \frac{\mathbf{C}_1 \aa}{(1-\sigma)^2}\rbr^{\frac{\mathbb{X}\Sigma}{2\ve}} \lbr 4^{{\Sigma}}\rbr^{\frac{\mathbb{X}(\mathbb{X}-2\ve)}{4\ve^2}}\bigwedge 1,
    \]
where   $\mathbb{B}, \Sigma, \aa,\mathbb{X}$ are the constants  defined in \cref{def_const_2} and $\al,\be,\ga$ to be the constants as defined in \cref{def_const_1}.
\end{theorem}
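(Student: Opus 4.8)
The plan is to run a De Giorgi-type iteration on level sets of $|\nabla u|$, but arranged so that the exponent bookkeeping is uniform in $p$ across the singular and degenerate ranges. First I would fix the cylinder $Q_{(\rho,\tht)}$ and the scaling parameters: since we already know (from \cref{rough_lip}) that $|\nabla u|\in L^\infty_{\loc}$, set $M := \sup_{Q_{(\rho,\tht)}}|\nabla u|$ (finite) and work with the truncated functions $(|\nabla u| - k_n)_+$ along a sequence of levels $k_n = k(2 - 2^{-n})$ interpolating between $k$ and $2k$, where $k$ is to be chosen at the end. On a nested family of sub-cylinders $Q_n$ shrinking from $Q_{(\rho,\tht)}$ down to $Q_{(\sigma\rho,\sigma\tht)}$ with the usual geometric radii $\rho_n = \sigma\rho + (1-\sigma)\rho\, 2^{-n}$ (and similarly in time), I would derive the Caccioppoli/energy inequality for $(|\nabla u|-k_n)_+$. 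This is exactly where the hypotheses \cref{5.17}--\cref{5.18} alluded to in the remark preceding the statement get used: differentiating the equation, testing with a cutoff times the truncation, and invoking the structure conditions \cref{ellipticity} gives control of $\sup_t \int (|\nabla u|-k_n)_+^2 + \iint |\nabla (|\nabla u|-k_n)_+|^p$ by $\frac{\mathbf{C}_1\aa}{((1-\sigma)\rho)^2}\,2^{2n}$ times an integral of $(|\nabla u|-k_{n})_+^{p}$-type quantities over $Q_n$, plus lower-order terms absorbed using $|\nabla u|\le M$.

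Next I would feed this energy estimate into the parabolic Sobolev embedding \cref{par_sob_emb} with exponent $\tp = p$ (so $\tq = p\frac{N+2}{N}$), which converts the $V^{2,p}_0$-norm bound into an $L^{p(N+2)/N}$ bound on $(|\nabla u|-k_n)_+$. Combining with Hölder and the measure of the super-level set $A_n := \{|\nabla u| > k_n\}\cap Q_n$, one gets a recursive inequality of the shape
\[
 Y_{n+1} \leq C\, b^{n}\, Y_n^{1+\alpha}, \qquad Y_n := \iint_{Q_n} (|\nabla u|-k_n)_+^{p+\ve}\,dz \;\Big/\; (\text{normalization}),
\]
for suitable $C,b>1$ and $\alpha>0$ that are \emph{explicit} in $N,p,\ve$. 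The crucial flexibility — and this is the point the introduction highlights — is that rather than using the full gain $\tq = p(N+2)/N$, I would only use a fraction of the extra integrability, i.e. interpolate between $L^p$ and $L^{p(N+2)/N}$ so as to land on the exponent $p+\ve$; this is what makes the final estimate depend on $\iint |\nabla u|^{p+\ve}$ and what keeps all the constants (encoded as $\mathbb{B},\Sigma,\aa,\mathbb{X}$ in \cref{def_const_2}) uniform as $p$ crosses $2$. Then \cref{iteration} applies: if $Y_0 \le C^{-1/\alpha} b^{-1/\alpha^2}$, then $Y_n \to 0$, which forces $(|\nabla u|-2k)_+ \equiv 0$ on $Q_{(\sigma\rho,\sigma\tht)}$, i.e. $\sup_{Q_{(\sigma\rho,\sigma\tht)}}|\nabla u| \le 2k$.

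The last step is to unwind the smallness condition $Y_0 \le C^{-1/\alpha}b^{-1/\alpha^2}$ into an explicit lower bound on the level $k$ in terms of $\iint_{Q_{(\rho,\tht)}}|\nabla u|^{p+\ve}$. Since $Y_0 \apprle k^{-(p+\ve)}\iint_{Q_{(\rho,\tht)}}|\nabla u|^{p+\ve}\,dz$ times the geometric factors $\frac{\mathbf{C}_1\aa}{(1-\sigma)^2}$, solving for $k$ produces exactly the quantity
\[
 k = 2^{1/\Sigma}\,\mathbb{B}^{\frac{N+2}{2}} \Big(\iint_{Q_{(\rho,\tht)}} |\nabla u|^{p+\ve}\,dz\Big)^{\frac{2}{N+2}} \frac{\mathbf{C}_1\aa}{(1-\sigma)^2},
\]
raised to the power $\frac{\mathbb{X}\Sigma}{2\ve}$, with the residual factor $(4^{\Sigma})^{\frac{\mathbb{X}(\mathbb{X}-2\ve)}{4\ve^2}}$ coming from the $b^{-1/\alpha^2}$ term in \cref{iteration} after expressing $b$ in terms of $4^\Sigma$ and $\alpha$ in terms of $\ve$ and $\mathbb{X}$. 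The final $\bigwedge 1$ just records that if the right-hand side already exceeds $1$ the bound is vacuous (one may take $k$ at least $M$-scale). I expect the main obstacle to be purely bookkeeping: tracking the precise dependence of $C$, $b$, and $\alpha$ on $N,p,\ve$ through the Caccioppoli estimate, the Sobolev embedding, and the interpolation, so that they assemble into the stated closed form with constants $\mathbb{B},\Sigma,\aa,\mathbb{X}$ — and checking that none of these blow up or degenerate as $p\to 2$, which is the whole reason the new scaling $Q_\rho^\la$ in notation \ref{not2} is introduced. The analytic content (energy estimate, embedding, iteration) is standard; making it uniform is the delicate part.
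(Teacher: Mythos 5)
Your scheme stops one iteration short of the theorem, and the missing step is precisely where the stated constants come from. When you differentiate the equation and test with the admissible choice $f(v)=v^{\al}(v-k_{n+1})_+^{\be}$, the right-hand side of the energy estimate necessarily carries the powers $v^{p+\al+\be}$ and $v^{2+\al+\be}$, which exceed the iterated exponent $\al+\be+2-\ga$ by $p-2+\ga>0$; this excess cannot be interpolated away inside the Sobolev step. Your proposal absorbs it ``using $|\nabla u|\le M$'', but then the constant in the recursion $Y_{n+1}\le C b^n Y_n^{1+\alpha}$ contains $M=\sup_{Q_{(\rho,\tht)}}|\nabla u|$, and unwinding the smallness condition produces a level $k$ that still carries a positive power of $M$. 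In other words, a single De Giorgi pass yields at best the intermediate estimate \cref{sup_bnd_first}, in which $\lbr \sup_{Q_{(\rho,\tht)}}v\rbr^{p+\ga}$ sits on the right-hand side, and not the theorem. The paper removes this sup-dependence by a \emph{second} iteration: one writes $v^{p+1+\ga}\le v^{p+\ve}\lbr\sup v\rbr^{1+\ga-\ve}$, applies the first-stage bound on an expanding family of cylinders between $Q_{(\sigma\rho,\sigma\tht)}$ and $Q_{(\rho,\tht)}$, and invokes \cref{lemma_iter_2} for equibounded sequences. That is where $\ve$ actually enters and where the exponent $\frac{\mathbb{X}\Sigma}{2\ve}$ and the factor $\lbr 4^{\Sigma}\rbr^{\mathbb{X}(\mathbb{X}-2\ve)/(4\ve^2)}$ are generated; attributing $\ve$ to ``using only a fraction of the Sobolev gain'' does not produce these quantities and leaves the $M$-dependence unresolved.

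Second, your sketch does not engage the actual unification mechanism, which is the stated novelty. The classical case split arises because the energy estimate pairs a space term of size $\rho^{-2}$ with a time term of size $\tht^{-1}k^{-(p-2)}$ (after the powers of $k$ are extracted), and DiBenedetto balances these intrinsically, separately for $p\ge2$ and $p\le2$. Here the split is avoided by the specific choice $\al=\ga=\frac{4}{N+2}$, $\be=p-1+\ga$ (so that $p-2+\ga>0$ for every $p>\frac{2N}{N+2}$) together with restricting to levels $k\ge1$, which yields the uniform bound $\aa_n\le\aa$ of \cref{aa_n} in \cref{rmk5.6}; the $\bigwedge 1$ in the statement is $\max\{\cdot,1\}$ and exists precisely to license $k\ge1$, not to record that the bound is vacuous when the right side exceeds one. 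Relatedly, the energy estimate for the differentiated equation gives a weighted $L^2$ gradient quantity (terms of the type $v^{p-2}|\nabla^2u|^2 f(v)\zeta^2$), not $\iint|\nabla(v-k_n)_+|^p$, so the parabolic embedding is used with exponent $2$ rather than $\tp=p$; as written, your Caccioppoli/embedding pairing does not match the structure produced by the equation.
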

\begin{remark}
    The right hand side of \cref{lipschitz} has $\iint_{Q_{(\rho,\tht)}} v^{p+\ve} \ dz$ which holds for any $\ve \in (0,1)$. However, it is well known from the uniform higher integrability results proved  in \cite[Theorem 6.1]{adijungtae} building on the techniques  first developed in \cite{KL},  that $v = |\nabla u|  \in L^{p+\ve_0}$ for some universal $\ve_0 \in (0,1)$ with stable estimates independent of the singular or degenerate case. Let us choose $\ve = \ve_0$, then the higher integrability result gives 
    \begin{equation}\label{high_int}
      \fiint_{Q_{(\rho,\tht)}} v^{p+\ve_0} \ dz   \apprle_{(N,p,C_0,C_1)} \lbr \fiint_{Q_{(2\rho,2\tht)}} v^{p} \ dz\rbr^{1+\ve_0\de},
    \end{equation}
where $\de = \frac{1}{-\frac{N}{p} + \frac{(N+2)d}{2}}$ and $\min\left\{\frac{2}{p},1\right\}  >d >  \frac{2N}{(N+2)p}$ is some fixed exponent.
\end{remark}
\begin{remark}
    We note that \cref{high_int} can be further weakened with the use of what is called \emph{very weak solutions}. This version of higher integrability was established  in  \cite{KLVery} and a unified approach has been developed more recently in \cite{adijehan}. Thus there exists an $\ve_0$ depending only on data such that the following estimate holds
    \begin{equation*}
      \fiint_{Q_{(\rho,\tht)}} v^{p} \ dz   \apprle_{(N,p,C_0,C_1)} \lbr \fiint_{Q_{(2\rho,2\tht)}} v^{p-\ve_0} \ dz\rbr^{1+\ve_0\de},
    \end{equation*}%
where $\de = \frac{1}{-\frac{N}{p} + \frac{(N+2)d}{2}-\ve_0}$ and $\min\left\{\frac{2}{p},1\right\}  >d >  \frac{2N}{(N+2)p}$ is some fixed exponent.
\end{remark}

We now state the further interpolation estimates. The first is the analogue of \cite[Theorem 5.1' from Chapter VIII]{DB93}:
\begin{corollary}[Degenerate case]\label{cor1}
    Let $p\geq 2$ and  $u$ be a local weak solution of \cref{lip_eqn} satisfying $v = |\nabla u| \in L^q_{\loc}$ for all $q \in [1,\infty)$, then $u$ is Lipschitz continuous. Moreover,  for any $\ep \in (0,2]$, the following estimate is satisfied:
\[
     \sup_{Q_{(\sigma\rho,\sigma\tht)}}v \leq  \lbr  2^{\frac{1}{\Sigma}}\mathbb{B}^{\frac{N+2}{2}}\lbr \iint_{Q_{(\rho,\tht)}} v^{p-2+\ep} \ dz\rbr^{\frac{2}{N+2}} \frac{\mathbf{C} \aa}{(1-\sigma)^2}\rbr^{\frac{\mathbb{X}\Sigma}{2\ep}} \lbr 4^{{\Sigma}}\rbr^{\frac{\mathbb{X}(\mathbb{X}-2\ep)}{4\ep^2}}\bigwedge 1,
\]
where $\mathbb{X}$, $\Sigma$, $\aa$ and $\mathbb{B}$ are analogously computed constants with the choice $\al=\ga= 0$ and $\be = p-1$. 
\end{corollary}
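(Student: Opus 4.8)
The plan is to deduce \cref{cor1} from \cref{rough_lip} together with a rerun of the argument that proves \cref{lipschitz} under the specialized choice of scaling parameters $\al=\ga=0$ and $\be=p-1$ in \cref{def_const_1}, in the same spirit in which DiBenedetto's Theorem~5.1$'$ of Chapter~VIII of \cite{DB93} specializes his Theorem~5.1. First, since $p\geq 2 > \tfrac{2N}{N+2}$, \cref{rough_lip} applies and yields that $u$ is Lipschitz continuous, i.e.\ $v=|\nabla u|\in L^{\infty}_{\loc}$; this qualitative fact is also what makes the pointwise inputs \cref{5.17} and \cref{5.18} available, so it remains only to establish the quantitative bound.

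The one genuinely $p\geq 2$–specific ingredient is the monotonicity of the ellipticity lower bound in \cref{ellipticity} along level sets: on $\{v>k\}$ one has $(|\nabla u|^2+s^2)^{\frac{p-2}{2}} \geq c\,k^{p-2}$, so in the Caccioppoli estimate for the truncations $(v-k)_+$ the diffusion term can be bounded below with a factor $k^{p-2}$ extracted. The net effect is that the integrand surviving on the right-hand side of the resulting De Giorgi inequality carries the power $p-2$ rather than $p$, which is precisely what the choice $\al=\ga=0$, $\be=p-1$ encodes in the scaling of \cref{def_const_1}. Before proceeding one checks that this choice is admissible wherever the general proof of \cref{lipschitz} invokes relations among $\al,\be,\ga$, and that the derived quantities $\mathbb{B},\Sigma,\aa,\mathbb{X}$ of \cref{def_const_2} remain well defined and finite under it.

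With the degenerate Caccioppoli estimate in place, the remainder is the same machinery as for \cref{lipschitz}: on the scaled cylinders $Q_{\rho}^{\la}$, form the scaled level-set energies $X_n$ attached to a geometric sequence of levels $k_n$, apply the parabolic Sobolev embedding \cref{par_sob_emb} to convert them into a recursion of the type handled by \cref{iteration}, impose the resulting smallness threshold on $X_0$, and unwind the scaling to bound $\sup_{Q_{(\sig\rho,\sig\tht)}} v$. Carrying the constants through this chain with $\al=\ga=0$, $\be=p-1$ reproduces the displayed inequality, with $\mathbf{C}$ in place of $\mathbf{C}_1$ and with the exponents $\frac{\mathbb{X}\Sigma}{2\ep}$ and $\frac{\mathbb{X}(\mathbb{X}-2\ep)}{4\ep^2}$; the admissible range $\ep\in(0,2]$ is forced by needing $p-2+\ep\geq p-2$ while the endpoint $\ep=2$ recovers the classical $\iint v^{p}$ estimate, and the factor $\bigwedge 1$ merely encodes the usual normalization that the bound may be taken to be at least $1$.

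The main obstacle I anticipate is bookkeeping rather than a new idea: verifying that $\al=\ga=0$, $\be=p-1$ is a legitimate choice at every step of the general argument — positivity of $\Sigma$ and $\mathbb{X}$, the compatibility of $\mathbb{X}$ with $\ep$, and the constraints coming from the Sobolev exponents in \cref{par_sob_emb} — and arranging the Caccioppoli estimate so that exactly the power $p-2$ of $v$, and nothing more, is left on the right. This last point is the place where the unified scaling must be fed the correct structural bound, and it is the only step that sees the sign of $p-2$.
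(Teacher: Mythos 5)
Your overall plan is the paper's: specialize the unified scheme behind \cref{lipschitz} to the choice $\al=\ga=0$, $\be=p-1$ in \cref{def_const_1}, with $p\geq 2$ entering only through the extraction of the factor $k_{n+1}^{p-2}$ from the energy estimate on the level set $\{v\geq k_{n+1}\}$ (and through the bound $\aa_n\leq\aa$, which uses $p+\ga-2\geq 0$); you identify that ingredient correctly, and invoking \cref{rough_lip} first to have $v\in L^{\infty}_{\loc}$ is also how the paper proceeds. The genuine gap is that you run only the first, De Giorgi--type iteration (levels $k_n$, recursion closed by \cref{iteration}, smallness of the initial quantity) and assert that ``carrying the constants through this chain'' yields the stated estimate. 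It does not: that stage produces only the analogue of \cref{sup_bnd_first}, namely $\sup_{Q_{(\sigma\rho,\sigma\tht)}}v \leq \big(\mathbb{B}^{\frac{N+2}{2}}\big(\iint_{Q_{(\rho,\tht)}}v^{p+1}\,dz\big)^{\frac{2}{N+2}}\tfrac{\mathbf{C}_1\aa}{(1-\sigma)^2}\big(\sup_{Q_{(\rho,\tht)}}v\big)^{p-2}\big)^{\Sigma}\bigwedge 1$, in which the supremum over the larger cylinder still sits on the right with power $p-2$. The parameter $\ep$ and the exponents $\frac{\mathbb{X}\Sigma}{2\ep}$ and $\frac{\mathbb{X}(\mathbb{X}-2\ep)}{4\ep^2}$ come entirely from a \emph{second} iteration that your proposal omits: one interpolates $\iint v^{p+1}\,dz\leq (\sup v)^{3-\ep}\iint v^{p-2+\ep}\,dz$, applies the resulting inequality on a nested family of cylinders interpolating between $Q_{(\sigma\rho,\sigma\tht)}$ and $Q_{(\rho,\tht)}$ to get $M_n\leq 2^{2n\Sigma}M_{n+1}^{(\mathbb{X}-2\ep)/\mathbb{X}}(\cdots)^{\Sigma}\bigwedge 1$ with $M_n=\sup_{Q_n}v$ and $\mathbb{X}=N(p-2)+2(p+1)$, and then closes with \cref{lemma_iter_2}, not \cref{iteration}. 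Your stated reason for the range $\ep\in(0,2]$ (``$p-2+\ep\geq p-2$'', true for every $\ep>0$) is vacuous and confirms that this absorption step is missing rather than merely compressed; the genuine constraint lives in the interpolation exponent $3-\ep$ and the requirement that the exponent $\frac{\mathbb{X}-2\ep}{\mathbb{X}}$ stay in $(0,1)$.

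Two smaller corrections. The Lipschitz part of the paper works on plain cylinders $Q_{(\rho,\tht)}=B_{\rho}\times(-\tht,\tht)$, not the intrinsic cylinders $Q_{\rho}^{\la}$ you mention (those appear only in the $C^{1,\alpha}$ part); the unified treatment is achieved not by intrinsic scaling but by taking $k\geq 1$ so that $\aa_n\leq\aa$ as in \cref{rmk5.6}, i.e.\ by \emph{not} balancing the two terms of $\aa_n$, which balancing is exactly what forces the degenerate/singular split in \cite{DB93}. For the same reason the factor $\bigwedge 1$ is not a cosmetic normalization: it is what guarantees $k\geq 1$ and hence legitimizes the bound $\aa_n\leq\aa$ in the first iteration.
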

Now we state the analogue of \cite[Theorem 5.2' from Chapter VIII]{DB93}:
\begin{corollary}[Singular case] \label{cor2}
    Let $\frac{2N}{N+2} < p\leq 2$ and  $u$ be a weak solution of \cref{lip_eqn} satisfying $v=|\nabla u| \in L^q_{\loc}$ for all $q \in [1,\infty)$, then $u$ is Lipschitz continuous. Moreover,  for any $\ep \in (2-p,3]$, the following estimate is satisfied:
\[
     \sup_{Q_{(\sigma\rho,\sigma\tht)}}v \leq  \lbr  2^{\frac{1}{\Sigma}}\mathbb{B}^{\frac{N+2}{2}}\lbr \iint_{Q_{(\rho,\tht)}} v^{\ep} \ dz\rbr^{\frac{2}{N+2}} \frac{\mathbf{C} \aa}{(1-\sigma)^2}\rbr^{\frac{\mathbb{X}\Sigma}{2(\ep+p-2)}} \lbr 4^{{\Sigma}}\rbr^{\frac{\mathbb{X}(\mathbb{X}-2(\ep+p-2))}{4(\ep+p-2)^2}}\bigwedge 1,
\]
where $\mathbb{X}$, $\Sigma$, $\aa$ and $\mathbb{B}$ are analogously computed constants with the choice $\al=\ga= 2-p$ and $\be = 1$. 
\end{corollary}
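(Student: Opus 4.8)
The plan is to establish the two assertions in turn, obtaining the quantitative bound from the proof of \cref{lipschitz} specialized to the singular regime. Lipschitz continuity is immediate: since $\frac{2N}{N+2}<p\le 2<\infty$, \cref{rough_lip} applies to $u$ and gives $v=|\nabla u|\in L^\infty_{\loc}$. This is precisely the a priori input used in the proof of \cref{lipschitz} to produce the estimates \cref{5.17} and \cref{5.18}, which is what legitimizes differentiating the equation and running the quantitative De Giorgi scheme.

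For the quantitative estimate I would re-trace the proof of \cref{lipschitz} verbatim, altering only the exponent assignment that controls the degeneracy of the coefficient. Recall that there one works with the truncations $(v-k)_+$ of $v=|\nabla u|$ on a nested family of cylinders, uses the energy inequalities from \cite[Chapter VIII]{DB93} for the prototype \eqref{prot}, applies the parabolic Sobolev embedding \cref{par_sob_emb}, and closes the iteration through \cref{iteration} and \cref{lemma_iter_2}; the way in which the powers of $v$ are extracted from the coefficient $(|\nabla u|^2+s^2)^{\frac{p-2}{2}}$ supplied by \eqref{ellipticity} --- precisely the flexibility in the Sobolev step emphasized in \cref{section1} --- is recorded by the parameters $\al,\be,\ga$ of \cref{def_const_1}, and hence by $\mathbb{B},\Sigma,\aa,\mathbb{X}$ of \cref{def_const_2}. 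For $p\le 2$, on the super-level set $\{v\ge k\}$ with $k>0$ one has the elementary chain
\[
 (s+v)^{p-2}\ \le\ (v^2+s^2)^{\frac{p-2}{2}}\ \le\ v^{\,p-2},
\]
which says that the singular degeneracy should be absorbed as the single negative power $v^{-(2-p)}$, the companion factors carrying only trivial powers; this is exactly the choice $\al=\ga=2-p$, $\be=1$. Threading this choice through the identical chain of estimates reproduces the conclusion of \cref{lipschitz}, now with the De Giorgi forcing term carrying the exponent $\ep$, with $\ep+p-2$ taking over the former role of $\ve$, and with $\mathbb{B},\Sigma,\aa,\mathbb{X}$ recomputed at the new $\al,\be,\ga$; this is the displayed inequality. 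The admissible range $\ep\in(2-p,3]$ is exactly the range over which all the iterations in the scheme --- in particular the exponent-lowering step \cref{lemma_iter_2} --- close, the strict lower endpoint being the positivity requirement $\ep+p-2>0$ produced by that step.

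The main obstacle, and the only place where $p\le 2$ is genuinely used, is verifying that with $\al=\ga=2-p$, $\be=1$ the energy inequalities of \cite[Chapter VIII]{DB93} remain valid with constants depending only on $N,p,C_0,C_1$ --- in particular that the negative power $v^{-(2-p)}$ produced on $\{v\ge k\}$ does not destroy the integrability demanded by \cref{par_sob_emb}. This is handled by $k>0$ along the iteration together with the a priori bound $v\in L^\infty_{\loc}$ from \cref{rough_lip}, which keep every intermediate integral finite with stable constants; granting this, the remainder is the established machinery of \cref{lipschitz} run mutatis mutandis. As a consistency check, the same argument with the complementary choice $\al=\ga=0$, $\be=p-1$ gives \cref{cor1}, the two choices coincide at $p=2$, and the resulting estimate is the analogue, within the present unified framework, of the classical \cite[Theorem 5.2' from Chapter VIII]{DB93}.
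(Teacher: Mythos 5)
Your proposal is correct and follows essentially the same route as the paper: Lipschitz continuity comes from \cref{rough_lip}, and the quantitative bound is obtained by specializing the unified scheme of \cref{lipschitz} to $\al=\ga=2-p$, $\be=p-1+\ga=1$ (so that $\al+\be+2-\ga=3$), running the first iteration to the analogue of \cref{sup_bnd_first}, and then closing the second iteration via \cref{lemma_iter_2}, where $\ep+p-2>0$ is exactly the condition $\ep>2-p$ and $\ep\le 3$ comes from interpolating the $\iint v^3$ forcing term. The only cosmetic difference is that the paper implements the "absorption of the singular weight" through the test-function choice $f(v)=v^{\al}(v-k_{n+1})_+^{\be}$ in \cref{energy_lemmma} together with the normalization $k\geq 1$ (the $\bigwedge 1$) in \cref{rmk5.6}, rather than through your coefficient-extraction heuristic, but this does not affect the argument.
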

\begin{remark}
    Instead of choosing $k \geq 1$, if we were to equate both the terms in the expression for $\aa_n$ from \cref{aa_n}, we would get the estimate in \cref{cor1} (resp.  \cref{cor2}) with $\bigwedge 1$ replaced with $\bigwedge \lbr \frac{\rho^2}{\tht} \rbr^{\frac{1}{p-2}}$ (resp.  $\bigwedge \lbr \frac{\rho^2}{\tht} \rbr^{\frac{1}{2-p}}$)  and $\aa$  replaced by the analogous expression that comes with this calculation. This is the version that is given in \cite[Theorem 5.1 and Theorem 5.2 of Chapter VIII]{DB93}.
\end{remark}
We now state the uniform $C^{1, \alpha}$ result.

\begin{theorem}\label{holder}
    Let $\frac{2N}{N+2} <p<\infty$ and $u$ be a weak solution of \cref{main_holder} with \cref{structure_aa_holder} in force. Moreover, assume that $|\nabla u| \in L^{\infty}_{\loc}$, then given any cylinder $Q_0 = B_{R_0} \times (-R_0^2,R_0^2)$,  there exists $\al= \al(N,p,C_0,C_1) \in (0,1)$ such that for any $z_0, z_1 \in Q_0$, there holds
    \[
        |\nabla u(z_0) - \nabla u(z_1)| \leq C \mu_0^a \lbr \frac{d(z_0,z_1)}{R_0}\rbr^{\al},
    \]
    where $\mu_0 = \max\{ 1,\sup_{4Q_0} |\nabla u|\}$, $C= C(N,p,C_0,C_1)$ and $a= a(N,p,\al)$.
\end{theorem}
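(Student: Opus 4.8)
The plan is to establish a geometric decay for the oscillation of $\nabla u$ along a sequence of nested cylinders and then to convert this into the pointwise H\"older estimate in the parabolic metric $d$. After a translation we may take $z_0=(0,0)$, and after a parabolic rescaling that normalises $\sup|\nabla u|\le 1$ on a unit cylinder while keeping the equation of the form \cref{main_holder} with structure constants depending only on $N,p,C_0,C_1$; the power $\mu_0^a$ in the statement will be the price of undoing this normalisation. Following the discussion in \cref{section1}, the entire iteration is carried out on the single family of cylinders $Q_\rho^\lambda$ introduced in \cref{section2}, in which both the spatial radius ($\lambda^{-1}\rho$) and the temporal half-width ($\lambda^{-p}\rho^2$) are contracted by powers of $\lambda$; it is precisely this extra spatial contraction, compared with the classical intrinsic cylinders, that allows the argument to proceed uniformly over the degenerate range $p\ge 2$ and the singular range $p\le 2$.

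First I would record the differentiated energy estimates. Since $|\nabla u|\in L^\infty_{\loc}$ is assumed, difference quotients of $u$ show that each $\partial_i u$, and the quantity $w:=(|\nabla u|^2+s^2)^{1/2}$, are weak sub-/supersolutions of parabolic equations whose measurable coefficients are elliptic with the weight $(|\nabla u|^2+s^2)^{(p-2)/2}$, by \cref{ellipticity}. From these I would derive Caccioppoli inequalities for $(\partial_i u-k)_\pm$ on the cylinders $Q_\rho^\lambda$ and, combining them with \cref{par_sob_emb}, \cref{sobolev-poincare}, \cref{Poincare} and the iteration \cref{iteration}, the standard De Giorgi measure-to-pointwise lemma adapted to this geometry.

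The core is the oscillation-decay step, organised as a dichotomy on a fixed cylinder $Q=Q_\rho^\lambda$ with $\lambda$ calibrated to the current oscillation $\omega:=\operatorname*{osc}_Q\nabla u$ (in practice one argues component-wise, with $\mu_\pm:=\sup_Q\partial_i u$ and $\inf_Q\partial_i u$). In the \emph{non-degenerate alternative}, where $\sup_Q|\nabla u|$ is comparable to $\omega$, the weight $(|\nabla u|^2+s^2)^{(p-2)/2}$ is pinched between two positive constants, the equation for $\nabla u$ is uniformly parabolic on $Q$, and the classical De Giorgi--Nash--Moser theory gives $\operatorname*{osc}_{Q_{\rho/\kappa}^\lambda}\nabla u\le \eta\,\omega$ with fixed $\eta\in(0,1)$ and $\kappa>1$. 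In the \emph{degenerate alternative} $\omega$ dominates $|\nabla u|$ on $Q$, so at least one of the level sets $\{\partial_i u>\mu_+-\omega/4\}$, $\{\partial_i u<\mu_-+\omega/4\}$ occupies only a small fraction of $Q$; here I would adapt the covering (stacking) argument of \cite[Chapter IX]{DB93} to the new cylinders to propagate that smallness from a single time-slice to a full sub-cylinder, and then run the De Giorgi iteration of the previous paragraph to improve the corresponding one-sided bound and again reduce the oscillation by a fixed factor. I expect this step --- making the covering argument and the two alternatives close uniformly, with the space-and-time scaling and with $p$ on both sides of $2$ --- to be the main obstacle.

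Finally I would iterate. Choosing $\omega_n$, $\rho_n$, $\lambda_n$ recursively so that $Q_{n+1}:=Q_{\rho_{n+1}}^{\lambda_{n+1}}\subset Q_n$ and $\operatorname*{osc}_{Q_n}\nabla u\le\omega_n$, the decay estimate forces $\omega_n\le\omega_0\,\sigma^n$ for some $\sigma\in(0,1)$, while the relation between the intrinsic radius and the Euclidean radius (which costs a fixed power of $\mu_0$) turns $\{\omega_n\}$ into a genuine modulus of continuity for $\nabla u$ measured in $d$. Given $z_0,z_1\in Q_0$, picking the index $n$ with $\rho_n\approx d(z_0,z_1)/R_0$ and unwinding the normalisation then yields $|\nabla u(z_0)-\nabla u(z_1)|\le C\mu_0^{a}\,(d(z_0,z_1)/R_0)^{\alpha}$ with $\alpha=\alpha(N,p,C_0,C_1)$ and $a=a(N,p,\alpha)$, as claimed.
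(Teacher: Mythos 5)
Your outline follows the same broad architecture as the paper (the cylinders $Q_\rho^\lambda$ scaled in both space and time, a two-alternative dichotomy, De Giorgi iteration plus the covering argument of \cite[Chapter IX]{DB93}, and linear theory in the nondegenerate regime), but the dichotomy as you formulate it contains genuine gaps. First, ``$\sup_Q|\nabla u|$ comparable to $\omega$'' does \emph{not} pinch the weight $(|\nabla u|^2+s^2)^{(p-2)/2}$: if $\nabla u$ ranges over a ball centered at the origin, sup and oscillation are comparable while $|\nabla u|$ comes arbitrarily close to zero, so the differentiated equation is not uniformly parabolic on $Q$. In the actual argument uniform parabolicity is obtained from a measure-density hypothesis, $|\{|\nabla u|<\mu/2\}|<\nu|Q_R^\mu|$ as in \cref{alt1}, which must first be upgraded to the pointwise bound $|\nabla u|\ge\mu/4$ on a smaller cylinder by a De Giorgi lemma (\cref{prop3.1}) before the linear theory of \cref{lemma_linear} can be invoked. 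Second, in your ``degenerate alternative'' the assertion that at least one of the near-extreme level sets of $\partial_i u$ automatically occupies a small fraction of $Q$ does not follow from ``$\omega$ dominates $|\nabla u|$''; in the paper the alternatives are \emph{defined} by the measure condition, and in the bad case (\cref{alt2}) one exploits the largeness of $\{|\nabla u|<\mu/2\}$ via a good time slice, logarithmic estimates whose test function is supported in $\{w_{x_i}\ge \frac{1}{4A}\}$ (which is what restores uniform ellipticity there), and a De Giorgi iteration, to reduce the \emph{sup} bound $\mu\mapsto\eta\mu$ rather than to cut the oscillation of a component near its supremum; a one-sided truncation at a level $k$ close to $\sup_Q\partial_i u$ only sees a controlled weight when that supremum is itself comparable to $\mu$, which is precisely what has to be arranged, not assumed.

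The second gap is the passage from decay to the two-point H\"older estimate. The decay produced by the alternatives lives on intrinsic cylinders $Q_{R_n}^{\mu_n}$ whose level $\mu_n$ and switching radius (\cref{switch_rad}), i.e.\ the scale at which one passes from sup-decay to Campanato-type excess decay and \cref{lemma8.8}, depend on the base point. Hence ``picking the index $n$ with $\rho_n\approx d(z_0,z_1)/R_0$ and unwinding the normalisation'' is not sufficient: for two points one must compare averages of $\nabla u$ over cylinders adapted to different levels $\mu_{n_0}\neq\mu_{n_1}$, estimate the ratios $|\C_i|/|\C_0\cap\C_1|$ (which cost powers of $\mu_{n_1}/\mu_{n_0}$), remove that dependence by a further case distinction with an auxiliary exponent $\gamma$ as is done after \cref{8.28_time}, handle the intermediate case via the radius $R_{n_\ast}$, and treat separately the regime where $s$ exceeds the current level $\mu_{n_0}$ (where one switches directly to \cref{lemma3.2}). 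This patching, together with the case where the two points do not lie in a common $Q_S^{\mu_0}$, constitutes a substantial part of the paper's proof and is not addressed by your sketch.
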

\begin{remark}\label{regularize}
We note that although the proof of Theorem \ref{holder} works in a uniform way for $1 < p < \infty$, but the restriction $p> \frac{2N}{N+2}$ is  only needed to regularize the equation  (as in Section 3 in  \cite{KM2})  and then getting uniform $C^{1, \alpha}$ estimates which only depends on the structure conditions. It is noted that the existence of solutions to the regularized problems in \cite{KM2} requires the use of Sobolev embedding  and that is precisely where one  requires $p > \frac{2N}{N+2}$. 
\end{remark}

\begin{remark}\label{mua} In the proof of $C^{1,\alpha}$ regularity, we shall denote the exponent '$a$' to be a number that depends on $N,p,C_0,C_1$ and the H\"older exponent $\al$. This exponent '$a$' appears on the right hand side of \cref{holder} and in the proof, by an abuse of notation, we redefine '$a$' at every step to be the larger of all the occurrences of '$a$'. This works in our case since $\mu_0 \geq 1$. 
\end{remark}

\section{Proof of rough Lipschitz bound - \texorpdfstring{\cref{rough_lip}}.}
\label{section4.5}
In order to prove uniform Lipschitz estimates, we will study the prototype equation so that we can follow some of the calculations from \cite[Chapter VIII]{DB93} for ease of reading. It must be noted that the result can be extended to more general equations of the form \cref{main_eqn} satisfying \cref{ellipticity} (see \cite[Section 1-(ii) of Chapter VIII]{DB93} for more on this) with standard modifications. Let us recall the prototype equation
\begin{equation}
    \label{lip_eqn}
    u_t - \dv( |\nabla u|^{p-2}  \nabla u) = 0.
\end{equation}

Let us first recall the well known energy estimate proved in  \cite[Proposition 3.2 of Chapter VIII]{DB93}.
\begin{lemma}
    Let $u$ be a local, weak solution of \cref{lip_eqn} and let $f(\cdot)$ be a non-negative, bounded, Lipschitz function on $\RR^+$. Then there exists a constant $C=C(N,p)$ such that on the  cylinder $Q_{(\rho,\tht)}(z_0) = B_{\rho}(x_0) \times (t_0-\tht, t_0)$, we have
    \begin{equation}\label{energy}
        \begin{array}{l}
            \left.\sup_{t_0-\tht\leq t \leq t_0} \int_{B_{\rho}(x_0)} \lbr \int_0^v s f(s) \ ds \rbr \zeta^2 \ dx \right|_{t_0-\tht}^{t} + \iint_{Q_{(\rho,\tht)}(z_0)} v^{p-2} |\nabla^2u|^2 f(v) \zeta^2 \ dz \\
            + \iint_{Q_{(\rho,\tht)}(z_0)} v^{p-1} |\nabla v|^2 f'(v) \zeta^2 \ dz + (p-2)  \iint_{Q_{(\rho,\tht)}(z_0)} v^{p-3} |\iprod{\nabla v}{\nabla u}|^2 f'(v) \zeta^2 \ dz \\
            \hfill\apprle_{C(N,p)} \iint_{Q_{(\rho,\tht)}(z_0)} v^{p} f(v) |\nabla \zeta|^2 \ dz + \iint_{Q_{(\rho,\tht)}(z_0)} \lbr \int_0^v sf(s) \ ds \rbr \zeta \zeta_t \ dz,
        \end{array}
    \end{equation}
    where we have denoted $v := |\nabla u|$.
\end{lemma}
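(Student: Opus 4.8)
My plan is the classical differentiate-and-test argument, adapted to the weight $f(v)$. Write $A(\xi)=|\xi|^{p-2}\xi$, so that $A'_{ij}(\xi)=|\xi|^{p-2}\delta_{ij}+(p-2)|\xi|^{p-4}\xi_i\xi_j$, and note that (formally) each $\pa_k u$ solves $\pa_t(\pa_k u)=\dv\!\big(A'(\nabla u)\,\nabla\pa_k u\big)$. The first point to settle is that this is legitimate for a mere weak solution, i.e.\ that $v^{(p-2)/2}\nabla^2 u\in L^2_{\loc}$ and that the test functions below are admissible: this is the standard second-order Caccioppoli estimate for the $p$-Laplacian, obtained through spatial difference quotients $\delta^h_\ell u$ together with the Steklov-averaged formulation of \cref{weak_sol} (or one argues on a regularized equation as in \cref{regularize}), supplemented by truncating $v$ at a level $M\to\infty$ to keep every integral finite. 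Granting this, fix the backward cylinder $Q_{(\rho,\tht)}(z_0)$ and a cutoff $\zeta$ supported in $B_\rho(x_0)$ in the spatial variable.

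Test the $k$-th equation with $\varphi_k:=\pa_k u\,f(v)\,\zeta^2$, sum over $k=1,\dots,N$, and integrate over $B_\rho(x_0)\times(t_0-\tht,t)$. For the parabolic term use $\sum_k\pa_t(\pa_k u)\,\pa_k u=\tfrac12\pa_t v^2=v\,v_t$ and introduce $G(v):=\int_0^v sf(s)\,ds$, so $G'(v)=vf(v)$ and $\sum_k\pa_t(\pa_k u)\,\pa_k u\,f(v)=\pa_t G(v)$; integrating in $t$ produces $\big[\int_{B_\rho}G(v)\zeta^2\,dx\big]_{t_0-\tht}^{t}-\iint G(v)\,2\zeta\zeta_t\,dz$, which is rigorous at the Steklov level and then passes to the limit $h\to0$. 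This yields the first term on the left and, moved across, the $\iint G(v)\zeta\zeta_t$ term on the right.

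For the diffusion term, expand $\nabla\varphi_k=\nabla\pa_k u\,f(v)\zeta^2+\pa_k u\,f'(v)\nabla v\,\zeta^2+\pa_k u\,f(v)\,2\zeta\nabla\zeta$, which splits $\iint A'_{ij}(\nabla u)\,\pa_j\pa_k u\,\pa_i\varphi_k\,dz$ into three pieces; throughout use the pointwise identities $\sum_k\pa_j u\,\pa_j\pa_k u=v\,\pa_j v$, $\sum_k(\nabla u\!\cdot\!\nabla\pa_k u)\,\pa_k u=v\,(\nabla u\!\cdot\!\nabla v)$ and $\sum_k(\nabla u\!\cdot\!\nabla\pa_k u)^2=v^2|\nabla v|^2$. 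The first piece equals $\iint\!\big(v^{p-2}|\nabla^2 u|^2+(p-2)v^{p-2}|\nabla v|^2\big)f(v)\zeta^2\,dz$, which, by $|\nabla v|\le|\nabla^2 u|$, is bounded below by $\min\{1,p-1\}\iint v^{p-2}|\nabla^2 u|^2 f(v)\zeta^2\,dz$ and so controls the second term on the left; the second piece equals exactly $\iint\!\big(v^{p-1}|\nabla v|^2+(p-2)v^{p-3}(\nabla u\!\cdot\!\nabla v)^2\big)f'(v)\zeta^2\,dz$, i.e.\ the third and fourth terms on the left; and the third (error) piece is bounded in absolute value by $C(p)\iint v^{p-1}|\nabla v|\,f(v)\,\zeta|\nabla\zeta|\,dz$.

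It then remains to absorb the error: Young's inequality gives $v^{p-1}|\nabla v|f(v)\zeta|\nabla\zeta|\le\varepsilon\,v^{p-2}|\nabla v|^2 f(v)\zeta^2+C_\varepsilon\,v^p f(v)|\nabla\zeta|^2\le\varepsilon\,v^{p-2}|\nabla^2 u|^2 f(v)\zeta^2+C_\varepsilon\,v^p f(v)|\nabla\zeta|^2$, and choosing $\varepsilon$ small depending only on $p$ moves the first term into the coercive piece; what survives on the right is $C(N,p)\iint v^p f(v)|\nabla\zeta|^2\,dz$ together with the $\iint G(v)\zeta\zeta_t$ term, and a routine supremum argument in the time variable then yields the stated inequality. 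The only genuine obstacle is the regularity justification of the first paragraph — that the formal differentiation and the weighted second-order estimates are valid for weak solutions — which is by now standard ($p$-Laplacian difference-quotient estimates plus Steklov time-averaging, cf.\ \cite[Chapter VIII]{DB93}); the rest is routine algebra, the only care being the sign bookkeeping so that the $|\nabla v|^2$ and $(\nabla u\!\cdot\!\nabla v)^2$ terms land on the correct side and all constants depend only on $N$ and $p$.
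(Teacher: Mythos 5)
Your argument is correct and is essentially the standard proof of this estimate: the paper does not prove the lemma itself but recalls it from \cite[Proposition 3.2, Chapter VIII]{DB93}, and your differentiate-and-test computation (testing the differentiated equation with $\pa_k u\, f(v)\zeta^2$, using $\sum_k \pa_j u\,\pa_j\pa_k u = v\pa_k v$, and absorbing the cutoff error by Young's inequality, with the formal steps justified by difference quotients/regularization and Steklov averaging) is precisely the argument given there.
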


Let us first fix some constants:
\begin{definition}\label{def_const_1}
Let $\al,\be,\ga$ be positive constants satisfying the following relations,
\begin{enumerate}[(i)]
    \item\label{item1} Choose $\ga$ such that $p-2+\ga >0$. So since we have $p>\frac{2N}{N+2}$, we let $\ga = \frac{4}{N+2}$.
    \item\label{item2} Choose $\al$ and $\be$ such that   $\al \geq \ga$ and $\be \geq 1$.
\end{enumerate}
For the proof of \cref{lipschitz},  we will  take $\al = \ga = \frac{4}{N+2}$ and $\be = p-1+\ga$ which is admissible since $p-2+\ga>0$. For the proof of \cref{cor1} ($p \geq 2$), we take $\al=\ga=0$ and $\be = p-1$ and for the proof of \cref{cor2} ($p \leq 2$), we take $\al=\ga=2-p$ and $\be = 1$. 
\end{definition}

\subsection{Energy type estimate}

Let radii $\rho, \tht$ be given and for some fixed $\sigma \in (0,1)$, let us define the following:
\begin{equation}\label{5.3}
\begin{array}{llcl}
     \tht_n:= \sigma \tht + \frac{(1-\sigma)\tht}{2^n} , \quad  & \rho_n:= \sigma \rho + \frac{(1-\sigma)\rho}{2^n} & \txt{and} &  Q_n:= Q_{(\rho_n,\tht_n)},\\
     \trho_n := \frac{\rho_n + \rho_{n+1}}{2}, \quad & \ttht_n := \frac{\tht_n + \tht_{n+1}}{2} & \txt{and}&  \tQ_n:= Q_{(\trho_n,\ttht_n)}.
     \end{array}
\end{equation}
With the above choices of radii, we note that $Q_0= Q_{(\rho,\tht)}$ and   $Q_{\infty}= Q_{(\sigma \rho,\sigma \tht)}$.
 For a fixed $k\in (0,\infty)$ to be eventually chosen, we denote 
\begin{equation}\label{k_n}
    k_n := k - \frac{k}{2^n},
\end{equation}
and consider the following cut-off functions:
\begin{equation}\label{5.5}
\begin{array}{lcl}
    \left\{ \begin{array}{l} \zeta_n = 1 \ \text{on} \ \tQ_n,\\ \zeta_n = 0 \ \text{on} \ \pa_p \tQ_n \end{array}\right. & \txt{with} &  |\nabla \zeta_n| \leq \frac{2^{n+2}}{(1-\sigma)\rho}\quad \text{and} \quad   \abs{\ddt{\zeta_n}} \leq \frac{2^{n+2}}{(1-\sigma)\tht}.
\end{array}
\end{equation}
Note that since we start off with the assumption $u \in W^{1,q}_{\loc}$ for any $q<\infty$, the choice of $f(v) = v^{\al} (v-k_{n+1})_+^{\be}$ in \cref{energy} is admissible, where  $\al,\be,\ga,\ve$ as chosen to satisfy \cref{def_const_1}, noting that $\be \geq 1$ due to \cref{item2} (see \cite[Corollary 3.1, Chapter VIII]{DB93} for more on the admissibility of the choice of $f(v)$).

Let us state the lemma that will be proved:
\begin{lemma}
    \label{energy_lemmma}
    With $\al,\be,\ga$ as in \cref{def_const_1} and $k,\tQ_n,\zeta_n$ as given above, we have the following energy estimate:
    \begin{equation}\label{energy_lemma_est}
    \begin{array}{l}
   (\alpha+ \beta+ 2 - \gamma)     \sup_{I_n} \lbr \frac{k}{2} \rbr^\ga \int_{B_n} \lbr\vkn^{\frac{\al+\be+2-\ga}{2}}\zeta_n\rbr^2 \ dx + \lbr \frac{k}{2} \rbr^{p-2+\ga}  \iint_{Q_n} \lbr \nabla \lbr\vkn^{\frac{\al+\be+2-\ga}{2}}\zeta_n\rbr\rbr^2  \ dz\\
        \hfill \apprle  (\al+\be+2)^2 \iint_{Q_n}v^{p+\al+\be} \lsb{\chi}{\{v \geq k_{n+1}\}×} |\nabla\zeta_n|^2 \ dz + ( \alpha+ \beta+2) \iint_{Q_n} v^{2+\al+\be} \lsb{\chi}{\{v \geq k_{n+1}\}×} |(\zeta_n)_t| \ dz.
    \end{array}
\end{equation}
\end{lemma}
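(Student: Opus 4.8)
The plan is to start from the energy estimate \cref{energy} with the admissible choice $f(v) = v^{\al}(v-k_{n+1})_+^{\be}$ and the cut-off $\zeta_n$ from \cref{5.5}, and to massage each of the four terms on the left-hand side and the two terms on the right-hand side into the claimed form. First I would record that on the support of $f$ one has $v \geq k_{n+1} \geq k/2$ (since $k_{n+1} = k - k/2^{n+1} \geq k/2$), which is the mechanism that produces the powers $(k/2)^\ga$ and $(k/2)^{p-2+\ga}$ in front of the two left-hand terms: one factors out $v^\ga$ (resp. $v^{p-2+\ga}$, using the degeneracy weight $v^{p-2}$) from the relevant integrands and bounds it below by $(k/2)^\ga$ (resp. $(k/2)^{p-2+\ga}$) on $\{v \geq k_{n+1}\}$. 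The role of the hypothesis $p-2+\ga>0$ from \cref{item1} is exactly to make this lower bound meaningful.

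Next I would handle the two "good" gradient terms. Writing $w := \vkn^{(\al+\be+2-\ga)/2}$, the point is that
\[
    \nabla\!\lbr w\zeta_n\rbr = (\nabla w)\zeta_n + w\,\nabla\zeta_n,
\]
so $\lbr\nabla(w\zeta_n)\rbr^2 \apprle |\nabla w|^2\zeta_n^2 + w^2|\nabla\zeta_n|^2$; the first piece is, up to the constant $(\al+\be+2-\ga)^2$, comparable to $v^{\al+\be-\ga}|\nabla v|^2\zeta_n^2$ restricted to $\{v>k_{n+1}\}$, which is controlled by the combination of the second, third and fourth terms on the left of \cref{energy} after multiplying through by the lower bound $(k/2)^{p-2+\ga}$ extracted from $v^{p-2+\ga}$; I would need the elementary inequality $|\iprod{\nabla v}{\nabla u}| \leq v|\nabla v|$ to absorb the $(p-2)$-term (handling the sign of $p-2$ by noting that when $p<2$ this term has the "wrong" sign but is dominated by the third term since $v^{p-1}|\nabla v|^2 f' \geq v^{p-3}|\iprod{\nabla v}{\nabla u}|^2 f'$ pointwise — this is where one uses $f'\geq 0$, i.e. $\al,\be\geq 0$ and monotonicity). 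The leftover term $w^2|\nabla\zeta_n|^2(k/2)^{p-2+\ga}$ is of lower order and gets folded into the right-hand side. Similarly, for the sup-term I would use that $\int_0^v sf(s)\,ds \apprge v^{\al+\be+2-\ga}\vkn^\ga$ on $\{v\geq k_{n+1}\}$ up to the combinatorial factor $(\al+\be+2-\ga)$, again after pulling out $(k/2)^\ga$; and $\int_0^v sf(s)\,ds \apprle v^{2+\al+\be}\vkn^{\be}\chi_{\{v\geq k_{n+1}\}} \apprle v^{2+\al+\be}\chi_{\{v\geq k_{n+1}\}}$ when it appears on the right (since $(v-k_{n+1})_+ \leq v$).

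Finally I would estimate the two source terms of \cref{energy}: $\iint v^p f(v)|\nabla\zeta_n|^2\,dz \apprle \iint v^{p+\al+\be}\chi_{\{v\geq k_{n+1}\}}|\nabla\zeta_n|^2\,dz$ (bounding $(v-k_{n+1})_+^\be \leq v^\be$), and $\iint\lbr\int_0^v sf(s)\,ds\rbr\zeta_n(\zeta_n)_t\,dz \apprle \iint v^{2+\al+\be}\chi_{\{v\geq k_{n+1}\}}|(\zeta_n)_t|\,dz$, using $0\le\zeta_n\le1$; tracking the combinatorial constants gives the stated powers $(\al+\be+2)^2$ and $(\al+\be+2)$. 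Enlarging $\tQ_n$ to $Q_n$ on the right (and noting $\zeta_n$ is supported in $\tQ_n\subset Q_n$) and collecting terms yields \cref{energy_lemma_est}.

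I expect the main obstacle to be the bookkeeping of the combinatorial constants in $\al,\be,\ga$ — in particular verifying that every left-hand term genuinely carries the factor $(\al+\be+2-\ga)$ (or its square) and that the $(p-2)$-term is correctly dominated regardless of the sign of $p-2$, since this sign issue is exactly what the unified approach hinges on. The manipulation of powers of $v$ and the extraction of $(k/2)^{p-2+\ga}$ are routine once the degeneracy weight $v^{p-2}$ in the second term of \cref{energy} is correctly paired with $f(v)=v^\al\vkn^\be$ to reconstruct $v^{\al+\be-\ga}|\nabla v|^2$ times $v^{p-2+\ga}$.
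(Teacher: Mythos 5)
Your proposal is correct and follows essentially the same route as the paper: starting from \cref{energy} with $f(v)=v^{\al}\vkn^{\be}$, extracting $k_{n+1}^{\ga}\geq (k/2)^{\ga}$ and $k_{n+1}^{p-2+\ga}\geq (k/2)^{p-2+\ga}$ on $\{v\geq k_{n+1}\}$ (using $1+\al\geq\ga$ and $\al\geq\ga$), discarding the third and fourth terms of \cref{energy} via $|\iprod{\nabla v}{\nabla u}|\leq v|\nabla v|$ and $p>1$, reconstituting $\lbr\nabla(\vkn^{\frac{\al+\be+2-\ga}{2}}\zeta_n)\rbr^2$ by adding the $\vkn^{\al+\be+2-\ga}|\nabla\zeta_n|^2$ term and folding it into the right-hand side exactly as in \cref{5.9}--\cref{5.10}, and bounding the source terms by $v^{p+\al+\be}$ and $v^{2+\al+\be}$ with the stated combinatorial constants. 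The only blemishes are cosmetic: your displayed lower bound for $\int_0^v sf(s)\,ds$ has the exponents on $v$ and $\vkn$ garbled (the intended and correct bound is $\int_0^v sf(s)\,ds\geq \frac{k_{n+1}^{\ga}}{\al+\be+2-\ga}\vkn^{\al+\be+2-\ga}$), and you control the gradient piece through the third and fourth terms where the paper uses the Hessian term with $|\nabla v|\leq|\nabla^2 u|$, both of which work.
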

\begin{proof}
Recalling the choice $f(v) = v^{\al} (v-k_{n+1})_+^{\be}$, we see that in order to prove the lemma, we make use of \cref{energy}, thus we estimate each of the terms from \cref{energy} as follows:
\begin{description}[leftmargin=*]
    \item[Estimate for the first term:]  Since $\zeta_n$ vanishes on the parabolic boundary of $Q_n$, the $\sup$ term appearing on the left hand side of \cref{energy} is zero, thus we can estimate this term as follows:
    \begin{equation*}
        \begin{array}{rcl}
            \sup_{t_0-\tht\leq t \leq t_0}\int_{B_n} \lbr \int_0^v s s^{\al} (s-(k_{n+1}))_+^\be \ ds \rbr \zeta_n^2 \ dx & \geq & \sup_{t_0-\tht\leq t \leq t_0} (k_{n+1})^{\ga} \int_{B_n} \lbr \int_0^v (s-(k_{n+1}))_+^{\al+ \be + 1 - \ga} \ ds \rbr \zeta_n^2 \ dx\\
            & = & \frac{(k_{n+1})^\ga}{\al+\be+2-\ga} \sup_{t_0-\tht\leq t \leq t_0}\int_{B_n} \lbr \vkn^{\frac{\al+\be+2-\ga}{2}}\zeta_n\rbr^2 \ dx.
        \end{array}
    \end{equation*}
    \textcolor{blue}{Note that here we required $1+\al \geq \ga$.}
    \item[Estimate for the second term:] We estimate the second term appearing on the left hand side of \cref{energy} as follows:
    \begin{equation*}
        \begin{array}{rcl}
            \iint_{Q_n} v^{p-2} |\nabla^ 2 u|^2 v^{\al} \vkn^{\be} \zeta_n^2 \ dz & \geq & (k_{n+1})^{p-2+\ga} \iint_{Q_n} |\nabla^ 2u|^2 \vkn^{\al+\be-\ga} \zeta_n^2 \ dz \\
            & = &\left( \frac{2}{\al+\be+2-\ga}\right)^2 (k_{n+1})^{p-2+\ga} \iint_{Q_n} \lbr \nabla\vkn^{\frac{\al+\be+2-\ga}{2}} \zeta_n\rbr^2  \ dz.
        \end{array}
    \end{equation*}
    \textcolor{blue}{Note that here we required $\al \geq \ga$.}
    \item[Estimate for the fifth term:] The first term appearing  on the right hand side of \cref{energy} is estimated as follows:
    \begin{equation*}%
        \iint_{Q_n} v^p f(v) |\nabla\zeta_n|^2 \ dz \leq \iint_{Q_n}v^{p+\al+\be} \lsb{\chi}{\{v \geq k_{n+1}\}×} |\nabla\zeta_n|^2 \ dz.
    \end{equation*}%

    \item[Estimate for the sixth term:] Analogously, the second term on the right hand side of \cref{energy} is estimated as follows:
    \begin{equation*}%
        \iint_{Q_n} \lbr \int_0^v s s^{\al} (s-k_{n+1})_+^{\be} \ ds \rbr \zeta_n (\zeta_n)_t \ dz  \leq \frac{1}{\al+\be+2}\iint_{Q_n} v^{2+\al+\be} \lsb{\chi}{\{v \geq k_{n+1}\}×} |(\zeta_n)_t| \ dz.
    \end{equation*}%
\end{description}

In \cref{energy}, in view of the fact that 
\[
 \iint_{Q_{(\rho,\tht)}(z_0)} v^{p-1} |\nabla v|^2 f'(v) \zeta^2 \ dz + (p-2)  \iint_{Q_{(\rho,\tht)}(z_0)} v^{p-3} |\iprod{\nabla v}{\nabla u}|^2 f'(v) \zeta^2 \ dz \geq 0
 \]
 for $p>1$ which follows by an application of Cauchy-Schwartz inequality, 
let us ignore the third and fourth term appearing on the left hand side and make use of the bounds obtained for the first, second, fifth and sixth terms  above to get
\begin{equation}\label{5.8}
    \begin{array}{l}
        \frac{(k_{n+1})^\ga}{\al+\be+2-\ga} \sup_{I_n} \int_{B_n} \lbr \vkn^{\frac{\al+\be+2-\ga}{2}}\zeta_n\rbr ^2 \ dx + \frac{4(k_{n+1})^{p-2+\ga}}{(\al+\be+2-\ga)^2}  \iint_{Q_n} \lbr \nabla\vkn^{\frac{\al+\be+2-\ga}{2}} \zeta_n \rbr^2 \ dz\\
        \hfill \apprle \iint_{Q_n}v^{p+\al+\be} \lsb{\chi}{\{v \geq k_{n+1}\}×} |\nabla\zeta_n|^2 \ dz + \frac{1}{\al+\be+2}\iint_{Q_n} v^{2+\al+\be} \lsb{\chi}{\{v \geq k_{n+1}\}×} |(\zeta_n)_t| \ dz.
    \end{array}
\end{equation}
From the product rule, we see that 
    \begin{equation}\label{5.9}
             \iint_{Q_n} \abs{\nabla\vkn^{\frac{\al+\be+2-\ga}{2}}}^2 \zeta_n^2 \ dz + \iint_{Q_n} \vkn^{{\al+\be+2-\ga}} |\nabla\zeta_n|^2 \ dz 
             =  \iint_{Q_n} \abs{\nabla\lbr\vkn^{\frac{\al+\be+2-\ga}{2}}\zeta_n \rbr}^2 \ dz,
    \end{equation}
    thus we estimate the second term appearing on the left hand side of \cref{5.9} as follows:
\begin{equation}\label{5.10}
    \begin{array}{rcl}
        \iint_{Q_n} \vkn^{{\al+\be+2-\ga}} |\nabla\zeta_n|^2 \ dz & = &   \iint_{Q_n} \frac{v^{{p+\al+\be}}}{v^{p-2+\ga}}\lsb{\chi}{\{v \geq k_{n+1}\}}  |\nabla\zeta_n|^2 \ dz \\
        & \leq & \frac{1}{(k_{n+1})^{p-2+\ga}} \iint_{Q_n} v^{p+\al+\be}\lsb{\chi}{\{v \geq k_{n+1}\}×} |\nabla\zeta_n|^2 \ dz.
    \end{array}
\end{equation}
Adding $\frac{4(k_{n+1})^{p-2+\ga}}{(\al+\be+2-\ga)^2}  \iint_{Q_n} \vkn^{{\al+\be+2-\ga}} |\nabla\zeta_n|^2 \ dz$ to both sides of \cref{5.8} and making use of \cref{5.9} and \cref{5.10}, we get
\begin{equation}\label{5.11}
    \begin{array}{l}
        \sup_{I_n} \frac{k_{n+1}^\ga}{\al+\be+2-\ga} \int_{B_n} \lbr \vkn^{\frac{\al+\be+2-\ga}{2}}\zeta_n\rbr^2 \ dx + \frac{4k_{n+1}^{p-2+\ga}}{(\al+\be+2-\ga)^2}  \iint_{Q_n} \lbr \nabla\lbr\vkn^{\frac{\al+\be+2-\ga}{2}}\zeta_n\rbr\rbr^2  \ dz\\
        \hfill \apprle \lbr 1 + \frac{2}{\al+\be+2-\ga}\rbr\iint_{Q_n}v^{p+\al+\be} \lsb{\chi}{\{v \geq k_{n+1}\}×} |\nabla\zeta_n|^2 \ dz + \frac{1}{\al+\be+2}\iint_{Q_n} v^{2+\al+\be} \lsb{\chi}{\{v \geq k_{n+1}\}×} |(\zeta_n)_t| \ dz.
    \end{array}
\end{equation}
Since $\al+\be+2-\ga \leq \al+\be+2$, we multiply \cref{5.11} with $(\al+\be+2-\ga)^2$ to get the desired estimate. 
\end{proof}

For any $q \in (0,\infty)$, let us recall the well known Chebyschev's  inequality:
\begin{equation}
    \label{chebyschev}
    \iint_{Q_n} \lsb{\chi}{\{v\geq k_{n+1}\}} \ dz \leq \frac{1}{k_{n}^{q}} \iint_{Q_n} \vkn[n]^{q} \ dz.
\end{equation}
\begin{remark}\label{rmk_cheb_imp}Following the calculation from \cite[Estimate (7.5) of Chapter V]{DB93}, for any $\de >1$ and some parabolic cylinder $Q$, we have
\begin{equation*}
        \iint_{Q} \vkn[n]^{\de} \ dz  \geq   \iint_{Q} \vkn[n]^{\de} \lsb{\chi}{\{v \geq k_{n+1}\}} \ dz  \geq \iint_Q v^{\de} \lbr 1- \frac{2^{n+1}-2}{2^{n+1}-1}\rbr^{\de}  \lsb{\chi}{\{v \geq k_{n+1}\}} \ dz 
         \apprge  \frac{1}{2^{n\de}} \iint_Q v^{\de}  \lsb{\chi}{\{v \geq k_{n+1}\}} \ dz.
\end{equation*}
\end{remark}

\subsection{Proof of rough Lipschitz bound}

Let us now prove that $|\nabla u| \in L^{\infty}$ with the estimate depending on the quantities $N,p,C_0,C_1$ and $\|\nabla u\|_{L^s_{\loc}}$ for some $s \in (0,\infty)$ which is finite by hypothesis. 

Let us take any $\al,\be,\ga$ such that the two conditions in \cref{def_const_1} are satisfied and define
\begin{equation*}
    Y_n:= \iint_{Q_n} \vkn[n]^{\al+ \be+2-\ga} \ dz.
\end{equation*}%
Then applying Sobolev-Poincare inequality from \cite[Proposition 3.1 of Chapter I]{DB93}, we get
\begin{equation}\label{5.15}
\begin{array}{rcl}
    Y_{n+1}  & \leq &  \lbr \iint_{Q_n} \lbr\vkn^{\frac{\al+\be+2-\ga}{2}}\zeta_n\rbr^{\frac{2(N+2)}{N}} \ dz \rbr^{\frac{N}{N+2}} \lbr \iint_{Q_n} \lsb{\chi}{\{\vkn[n+1]\geq 0\}}\rbr^{\frac{2}{N+2}}\\
    & \apprle &  \lbr \iint_{Q_n} \lbr \nabla\lbr\vkn^{\frac{\al+\be+2-\ga}{2}}\zeta_n\rbr\rbr^2 \ dz \rbr^{\frac{N}{N+2}} \lbr \sup_{I_n}\int_{B_n} \lbr\vkn^{\frac{\al+\be+2-\ga}{2}}\zeta_n\rbr^{2} \ dz \rbr^{\frac{2}{N+2}}\\
    && \times \lbr \iint_{Q_n} \lsb{\chi}{\{\vkn[n+1]\geq 0\}}\rbr^{\frac{2}{N+2}}\\
    & = & I^{\frac{N}{N+2}} \times II^{\frac{2}{N+2}} \times III^{\frac{2}{N+2}}.
\end{array}
\end{equation}
We shall estimate each of the terms appearing on the right hand side of \cref{5.15} as follows:
\begin{description}[leftmargin=*]
    \item[Estimate for $I$:] Making use of \cref{energy_lemma_est}, we get
    \begin{equation}
        \label{estimate_I_p}
            I  \apprle  \frac{1}{k^{p-2+\ga}}\lbr \iint_{Q_n}v^{p+\al+\be} \lsb{\chi}{\{v \geq k_{n+1}\}×} |\nabla\zeta_n|^2 \ dz + \iint_{Q_n} v^{2+\al+\be} \lsb{\chi}{\{v \geq k_{n+1}\}×} |(\zeta_n)_t| \ dz\rbr
    \end{equation}
    Let us estimate each of these terms as follows:
    \begin{equation}\label{5.17_p}
        \begin{array}{l}
            \iint_{Q_n}v^{p+\al+\be} \lsb{\chi}{\{v \geq k_{n+1}\}×} |\nabla\zeta_n|^2 \ dz  \leq  \frac{2^{2(n+2)}}{\rho^2}\iint_{Q_n}v^{A}v^{p+\al+\be-A} \lsb{\chi}{\{v \geq k_{n+1}\}×}\\
             \qquad \qquad \qquad \leq  \frac{2^{2(n+2)}}{\rho^2} \lbr \iint_{Q_n}v^{\al+\be+2-\ga} \lsb{\chi}{\{v \geq k_{n+1}\}} \ dz \rbr^{\frac{A}{\al+\be+2-\ga}} \lbr \iint_{Q_n}v^{\frac{(p+\al+\be)(\al+\be+2-\ga)}{\al+\be+2-\ga-A}} \rbr^{\frac{\al+\be+2-\ga-A}{\al+\be+2-\ga}},
        \end{array}
    \end{equation}
    where  $A = \frac{\al+\be+2-\ga}{2}$.
    \begin{equation}\label{5.18_p}
        \begin{array}{l}
            \iint_{Q_n}v^{2+\al+\be} \lsb{\chi}{\{v \geq k_{n+1}\}×} |(\zeta_n)_t| \ dz \leq  \frac{2^{(n+2)}}{\theta}\iint_{Q_n}v^{B}v^{2+\al+\be-B} \lsb{\chi}{\{v \geq k_{n+1}\}×}\\
           \qquad \qquad \qquad  \leq \frac{2^{(n+2)}}{\theta} \lbr \iint_{Q_n}v^{\al+\be+2-\ga} \lsb{\chi}{\{v \geq k_{n+1}\}} \ dz \rbr^{\frac{B}{\al+\be+2-\ga}} \lbr \iint_{Q_n}v^{\frac{(2+\al+\be)(\al+\be+2-\ga)}{\al+\be+2-\ga-B}} \rbr^{\frac{\al+\be+2-\ga-B}{\al+\be+2-\ga}},
        \end{array}
    \end{equation}
    where $B = \frac{\al+\be+2-\ga}{2}$.
Thus combining \cref{5.17_p} and \cref{5.18_p} into \cref{estimate_I_p} gives
\begin{equation}
    \label{estimate_I}
    I \apprle \frac{2^{2(n+2)} }{k^{p-2+\ga}}\lbr \frac{\|v\|_{2(p+\al+\be)}^{p+\al+\be}}{\rho^2} Y_n^{\frac12}  + \frac{\|v\|_{2(2+\al+\be)}^{2+\al+\be}}{\tht} Y_n^{\frac12}\rbr.
\end{equation}
    \item[Estimate for $II$:]
    Making use of \cref{energy_lemma_est} and proceeding analogous to \cref{estimate_I}, we get
    \begin{equation}
        \label{estimate_II}
            II  \apprle  \frac{2^{2(n+2)} }{k^{\ga}}\lbr \frac{\|v\|_{2(p+\al+\be)}^{p+\al+\be}}{\rho^2} Y_n^{\frac12}  + \frac{\|v\|_{2(2+\al+\be)}^{2+\al+\be}}{\tht} Y_n^{\frac12}\rbr.
    \end{equation}
    \item[Estimate for $III$:] We can apply \cref{chebyschev} to get
    \begin{equation}\label{estimate_III}
        III \apprle \frac{2^{n(\al+\be+2-\ga)}}{k^{\al+\be+2-\ga}} Y_n.
    \end{equation}
\end{description}
Let us denote a constant $\mathbf{B}_1$ to possibly depend on $N,p,C_0,C_1, \rho, \tht, \|v\|_{2(2+\al+\be)}, \|v\|_{2(2+\al+\be)},\|v\|_{2(p+\al+\be)},\al,\be,\ga$ noting that each of these quantities are finite due to the hypothesis. Thus making use of \cref{estimate_I}, \cref{estimate_II}, \cref{estimate_III} into \cref{5.15}, we get
\begin{equation*}%
Y_{n+1} \leq \frac{\mathbf{B}_1 }{k^{\mathbb{E}}} Y_n^{1 + \frac{1}{N+2}} \mathbf{D}^n,
\end{equation*}%
where $\mathbb{E} := \frac{(p-2+\ga)N + 2(\al+\be+2)}{N+2}$ and $\mathbf{D} = 2^{2 + \frac{2(\al+\be+2-\ga)}{N+2}}$.  If we make the choice of $k$ such that
\begin{equation*}%
    Y_0  \leq  \lbr \frac{\mathbf{B}_1}{k^{\mathbb{E}}} \rbr^{-(N+2)} \mathbf{D}^{-(N+2)^2},
\end{equation*}%
then from \cref{iteration},  we see that $\{Y_n\}$ converges to zero as $n \rightarrow \infty$. This concludes the proof of \cref{rough_lip}. Before proceeding further, we make the following discursive remark.

\begin{remark}
    In the strictly degenerate regime $p \geq 2$ or the singular regime $p \leq 2$, such an explicit Lipschitz estimate is given in \cite[Chapter VIII]{DB93}. In the proof of the uniform rough Lipschitz estimate  as in \cref{rough_lip} above,  we however closely follow the approach  in \cite{DBF} ( where this Lipschitz regularity result was first proved)  instead of that in \cite{DB93} because we couldn't find  a direct  adaptation of  the proof of Lemma 4.2 in Chapter VIII  of \cite{DB93} to our situation.  \end{remark}

\section{Proof of Uniform Lipschitz estimate - \texorpdfstring{\cref{lipschitz}}.}
\label{section5}

Thanks to \cref{rough_lip}, we see that $|\nabla u| \in L^{\infty}_{\loc}$ and thus we can prove the required quantitative estimates. The first step is to iterate the energy estimates by making careful choice of the level sets. 

\subsection{First Iteration}
Let us now take $\al=\ga$ and $\be = p-1+\ga >1$ and then prove the quantitative Lipschitz regularity.  The proof of the Lipschitz regularity requires two iterative steps, the first of which is proved in this subsection. With $k_n$ as defined in \cref{k_n},  let us set
\begin{equation*}
    Y_n:= \iint_{Q_n} \vkn[n]^{p+1+\ga} \ dz.
\end{equation*}%
Since we want to suitably control $Y_{n+1}$ in terms of $Y_n$, we proceed as follows:
\begin{equation}\label{1.19}
    \begin{array}{rcl}
        Y_{n+1} & \overset{\redlabel{514a}{a}}{\leq} & \iint_{Q_n} \lbr\vkn^{\frac{p+1+\ga}{2}}\zeta_n\rbr^2 \ dz \\
        & \overset{\redlabel{514b}{b}}{\leq}& \lbr  \iint_{Q_n} \lbr\vkn^{\frac{p+1+\ga}{2}}\zeta_n\rbr^{\frac{2(N+2)}{N}} \ dz\rbr^{\frac{N}{N+2}} \lbr \iint_{Q_n} \lsb{\chi}{\{v \geq k_{n+1}\}} \ dz \rbr^{\frac{2}{N+2}}\\
        & \overset{\redlabel{514c}{c}}{\apprle} & \lbr  \iint_{Q_n} \lbr \nabla\lbr \vkn^{\frac{p+1+\ga}{2}}\zeta_n\rbr\rbr^{2} \ dz\rbr^{\frac{N}{N+2}} \lbr \sup_{I_n} \int_{B_n}  \lbr \vkn^{\frac{\al+\be+2-\ga}{2}}\zeta_n\rbr^2 \ dz \rbr^{\frac{2}{N+2}}\\
        && \times\lbr \frac{Y_n}{k_{n+1}^{p+1+\ga}} \rbr^{\frac{2}{N+2}},
    \end{array}
\end{equation}
where to obtain \redref{514a}{a}, we enlarged the domain of integration and make use of \cref{5.3} and \cref{5.5}, to obtain \redref{514b}{b}, we applied H\"older's inequality and finally to obtain \redref{514c}{c}, we made use of Sobolev embedding theorem along with \cref{chebyschev}. Over here, we also note that with our choice of $\alpha, \beta$ and $\gamma$, we have that $\alpha+ \beta+ 2 - \gamma= p+1+ \gamma$.

We can estimate the first two terms appearing on the right hand side of \cref{1.19} by making use of \cref{energy_lemmma}. Let us thus estimate each of the terms appearing on the right hand side of \cref{energy_lemma_est} as follows: 
\begin{description}[leftmargin=*]
    \item[Estimate of first term:]  We estimate the first term appearing on the right hand side of \cref{energy_lemma_est} as follows:
    \begin{equation}\label{5.17}
        \begin{array}{rcl}
            \iint_{Q_n}v^{p+\al+\be} \lsb{\chi}{\{v \geq k_{n+1}\}×} |\nabla\zeta_n|^2 \ dz & = & \iint_{Q_n}v^{p+\ga + p+1+\ga -2} \lsb{\chi}{\{v \geq k_{n+1}\}×} |\nabla\zeta_n|^2 \ dz\\
            & \overset{\cref{5.5}}{\leq} & \frac{4^{n+2}}{(1-\sigma)^2\rho^2} \frac{\lbr\sup_{Q_n} v\rbr^{p + \ga}}{(k_{n+1})^{2}} \iint_{Q_n} v^{p+1+\ga} \lsb{\chi}{\{v \geq k_{n+1}\}} \ dz\\
            & \overset{\text{\cref{rmk_cheb_imp}}}{\leq} & \frac{4^{n+2}2^{n(p+1+\ga)}}{(1-\sigma)^2\rho^2} \frac{\lbr\sup_{Q_n} v\rbr^{p + \ga}}{(k_{n+1})^{2}} Y_n.
        \end{array}
    \end{equation}
    \item[Estimate of second term:] Analogously, we estimate the second term appearing on the right hand side of \cref{energy_lemma_est} as follows:
    \begin{equation}\label{5.18}
        \begin{array}{rcl}
            \iint_{Q_n}v^{2+\al+\be} \lsb{\chi}{\{v \geq k_{n+1}\}×} (\zeta_n)_t \ dz & = & \iint_{Q_n}v^{2+\ga+p-1+\ga+p-p} \lsb{\chi}{\{v \geq k_{n+1}\}×} (\zeta_n)_t \ dz\\
            & \overset{\cref{5.5}}{\leq} & \frac{2^{n+2}}{(1-\sigma)\tht} \frac{\lbr\sup_{Q_n} v\rbr^{p + \ga}}{(k_{n+1})^{p}} \iint_{Q_n} v^{p+1+\ga} \lsb{\chi}{\{v \geq k_{n+1}\}} \ dz\\
            & \overset{\text{\cref{rmk_cheb_imp}}}{\leq} & \frac{2^{n+2}2^{n(p+1+\ga)}}{(1-\sigma)^2\tht} \frac{\lbr\sup_{Q_n} v\rbr^{p + \ga}}{(k_{n+1})^{p}}Y_n.
        \end{array}
    \end{equation}
\end{description}

Making use of \cref{5.17} and \cref{5.18} into \cref{energy_lemma_est} gives the following two bounds:
\begin{equation}\label{energy_lemma_est_no_k}
    \begin{array}{l}
        \sup_{I_n}   \int_{B_n} \lbr \vkn^{\frac{p+1+\ga}{2}}\zeta_n\rbr^2 \ dx \\
\hspace*{2cm} \apprle \frac{4^{n+2}2^{n(p+1+\ga)}}{(1-\sigma)^2}\lbr\sup_{Q_n} v\rbr^{p+ \ga} \frac{1}{(k_{n+1})^{2}}\lbr \frac{1}{\rho^2} \frac{1}{(k_{n+1})^{\ga}}  + \frac{1}{\tht}\frac{1}{(k_{n+1})^{p+\ga-2}} \rbr Y_n.
    \end{array}
    \end{equation}
    \begin{equation}\label{energy_lemma_2_est_no_k}
    \begin{array}{l}
           \iint_{Q_n} \lbr \nabla\lbr\vkn^{\frac{p+1+\ga}{2}}\zeta_n\rbr\rbr^2  \ dz\\
\hspace*{2cm} \apprle \frac{4^{n+2}2^{n(p+1+\ga)}}{(1-\sigma)^2}\lbr\sup_{Q_n} v\rbr^{p + \ga} \frac{1}{(k_{n+1})^{p}}\lbr \frac{1}{\rho^2} \frac{1}{(k_{n+1})^{\ga}}  + \frac{1}{\tht}\frac{1}{(k_{n+1})^{p+\ga-2}} \rbr Y_n.
    \end{array}
    \end{equation}

    Before we proceed, let us define a few constants and  make an important remark:
    \begin{definition}\label{def_const_2}
    Let us define the following exponents required for the proof:
    \begin{multicols}{2}
    \begin{enumerate}[(i)]
    \item $\mathbb{B}:= 2^{p+2+\ga - \frac{2(p+1+\ga)}{N+2}}$
    \item $\Sigma:=\frac{N+2}{pN+4+2(p+1+\ga)}$ 
    \item $\aa:= \lbr \frac{2^{\ga}}{\rho^2}   + \frac{2^{p+\ga-2}}{\tht} \rbr$.
    \item $\mathbb{X}:= pN+4+2(p+1+\ga)$.
\end{enumerate}
\end{multicols}
\end{definition}

    \begin{remark}\label{rmk5.6}
        Since $1 \geq \frac{2^n-1}{2^n}\geq \frac12$ for any $n \geq 1$, we have 
        \begin{equation}\label{aa_n}\aa_n := \lbr \frac{1}{\rho^2} \frac{1}{(k_{n+1})^{\ga}}  + \frac{1}{\tht}\frac{1}{(k_{n+1})^{p+\ga-2}} \rbr \leq \lbr \frac{1}{\rho^2} 2^{\ga}  + \frac{1}{\tht}2^{p+\ga-2} \rbr = \aa,\end{equation} where $\aa$ is as defined in \cref{def_const_2} and $k_{n+1}$ is as defined in \cref{k_n} for some   $k \geq 1$. Here we also use the fact that $p+\ga-2 > 0$. 
        
        It is easy to see that if we were to  balance both the terms in $\aa_n$, then we go back to the original estimate arising in the proofs of   \cite[Theorems 5.1 and 5.2, Chapter VIII]{DB93}, which is where the distinction between the singular and degenerate cases was needed. 
    \end{remark}

Substituting \cref{energy_lemma_est_no_k} and \cref{energy_lemma_2_est_no_k} along with \cref{aa_n} into \cref{1.19} and recalling the definition of constants from \cref{def_const_2}, we have the following iterative estimate
\begin{equation*}%
        Y_{n+1} 
         \leq  \mathbf{C}_1  \frac{\mathbb{B}^{n}}{k^{\frac{pN+4+2(p+1+\ga)}{N+2}}} \frac{1}{(1-\sigma)^2} \lbr \sup_{Q_{(\rho,\tht)}} v\rbr^{p+\ga} Y_n^{\frac{2}{N+2}+1} \aa.
\end{equation*}%
Applying \cref{iteration}, we see that $Y_n \rightarrow 0$ as $n \rightarrow \infty$ if 
\begin{equation*}%
    Y_0 = \iint_{Q_{(\rho,\tht)}} v^{p+1+\ga} \ dz \leq  \lbr \frac{\mathbf{C}_1 \aa }{k^{\frac{pN+4+2(p+1+\ga)}{N+2}}(1-\sigma)^2} \lbr \sup_{Q_{(\rho,\tht)}} v\rbr^{p+\ga} \rbr^{-\frac{N+2}{2}} \mathbb{B}^{-\frac{(N+2)^2}{4}}.
\end{equation*}%
In particular, if we make the following choice of $k$
\begin{equation*}%
    k =  \lbr \mathbb{B}^{\frac{N+2}{2}}\lbr \iint_{Q_{(\rho,\tht)}} v^{p+1+\ga} \ dz\rbr^{\frac{2}{N+2}} \frac{\mathbf{C}_1 \aa}{(1-\sigma)^2} \lbr \sup_{Q_{(\rho,\tht)}} v\rbr^{p+\ga} \rbr^{\frac{N+2}{pN+4+2(p+1+\ga)}} \bigwedge 1,
\end{equation*}%
then $Y_n \rightarrow 0$ as $n \rightarrow \infty$. From this, we see that $Y_{\infty} = 0$ is equivalent to the following estimate:
\begin{equation}\label{sup_bnd_first}
    \sup_{Q_{(\sigma\rho,\sigma\tht)}} v \leq k =  \lbr \mathbb{B}^{\frac{N+2}{2}}\lbr \iint_{Q_{(\rho,\tht)}} v^{p+1+\ga} \ dz\rbr^{\frac{2}{N+2}} \frac{\mathbf{C}_1 \aa}{(1-\sigma)^2} \lbr \sup_{Q_{(\rho,\tht)}} v\rbr^{p+\ga} \rbr^{\frac{N+2}{pN+4+2(p+1+\ga)}} \bigwedge 1,
\end{equation}

\subsection{Second iteration}
The first iteration gives a bound of the form \cref{sup_bnd_first} which does not give the desired Lipschitz bound due the $\sup$ term appearing also on the right hand side. In order to overcome this, we now iterate \cref{sup_bnd_first} a second time in this subsection.

Let us fix some $\ve \in(0,1)$ (to be eventually chosen later) and  rewrite \cref{sup_bnd_first} as follows:
\begin{equation}\label{5.25}
    \sup_{Q_{(\sigma\rho,\sigma\tht)}} v \leq  \lbr \mathbb{B}^{\frac{N+2}{2}}\lbr \iint_{Q_{(\rho,\tht)}} v^{p+\ve} \ dz\rbr^{\frac{2}{N+2}} \frac{\mathbf{C}_1 \aa}{(1-\sigma)^2} \lbr \sup_{Q_{(\rho,\tht)}} v\rbr^{p+\ga + \frac{2(1+\ga-\ve)}{N+2}} \rbr^{{\Sigma}} \bigwedge 1.
\end{equation}
With $\sigma \in (0,1)$, consider the family of cylinders $Q_n:= Q_{(\rho_n,\tht_n)}$ where
\begin{equation*}%
    \rho_n:= \sigma \rho + (1-\sigma)\rho \sum_{i=1}^n 2^{-i} \txt{and} \tht_n:= \sigma \tht + (1-\sigma)\tht \sum_{i=1}^n 2^{-i}.
\end{equation*}%
Then we have    $Q_0 = Q_{(\sigma\rho,\sigma\tht)}$ and $Q_{\infty} = Q_{(\rho,\tht)}$. If we denote
\begin{equation*}
    M_n:= \sup_{Q_n} v,
\end{equation*}%
then \cref{5.25} applied over $Q_{n+1}$ and $Q_n$ can be rewritten  for any $\ve\in (0,1]$ as 
\begin{equation}\label{5.28}
M_{n} \leq 2^{{2n\Sigma}} M_{n+1}^{\frac{\mathbb{X} -2\ve}{\mathbb{X}}} \lbr  \mathbb{B}^{\frac{N+2}{2}} \lbr \iint_{Q_{(\rho,\tht)}} v^{p+\ve} \ dz\rbr^{\frac{2}{N+2}} \frac{\mathbf{C}_1 \aa}{(1-\sigma)^2}\rbr^{{\Sigma}} \bigwedge 1
\end{equation}
where $\mathbb{X} = (p+\ga)(N+2)+2(1+\ga) = pn+4 + 2(p+1+\ga)$ ( given our choice of $\ga$) and   thus we can rewrite \cref{5.28} as 
\begin{equation*}%
M_{n} \leq 2^{{2n\Sigma}} M_{n+1}^{1-\frac{2\ve}{\mathbb{X}}} \lbr  \mathbb{B}^{\frac{N+2}{2}} \lbr \iint_{Q_{(\rho,\tht)}} v^{p+\ve} \ dz\rbr^{\frac{2}{N+2}} \frac{\mathbf{C}_1 \aa}{(1-\sigma)^2}\rbr^{{\Sigma}} \bigwedge 1
\end{equation*}%
Thus iterating the above estimate using \cref{lemma_iter_2}, we get
\begin{equation*}%
    M_0 = \sup_{Q_{(\sigma\rho,\sigma\tht)}}v \leq  \lbr  2^{\frac{1}{\Sigma}}\mathbb{B}^{\frac{N+2}{2}}\lbr \iint_{Q_{(\rho,\tht)}} v^{p+\ve} \ dz\rbr^{\frac{2}{N+2}} \frac{\mathbf{C}_1 \aa}{(1-\sigma)^2}\rbr^{\frac{\mathbb{X}\Sigma}{2\ve}} \lbr 4^{{\Sigma}}\rbr^{\frac{\mathbb{X}(\mathbb{X}-2\ve)}{4\ve^2}}\bigwedge 1,
\end{equation*}%
which gives the desired estimate.

\section{Proof of \texorpdfstring{\cref{cor1}}.}

In \cref{5.11}, let us take $\al = \ga = 0$ and $\be = p-1$ which are admissible since $p \geq 2$. Then the analogue of \cref{sup_bnd_first} in the case $p \geq 2$ becomes
\begin{equation*}
        \sup_{Q_{(\sigma\rho,\sigma\tht)}} v \leq k =  \lbr \mathbb{B}^{\frac{N+2}{2}}\lbr \iint_{Q_{(\rho,\tht)}} v^{p+1} \ dz\rbr^{\frac{2}{N+2}} \frac{\mathbf{C}_1 \aa}{(1-\sigma)^2} \lbr \sup_{Q_{(\rho,\tht)}} v\rbr^{p-2} \rbr^{\frac{N+2}{pN-2N+2(p+1)}} \bigwedge 1,
\end{equation*}%
Note that the constants $\mathbb{B}$, $\mathbf{C}_1$ and $\aa$ are the analogous versions of those  defined in \cref{def_const_2} with this specific choices of  $\al,\be,\ga$, but by an abuse of notation, we still use the same symbols.

For any $\ep \in (0,2]$, we get the following analogue of \cref{5.28}
\begin{equation*}
M_{n} \leq 2^{{2n\Sigma}} M_{n+1}^{\frac{\mathbb{X} -2\ep}{\mathbb{X}}} \lbr  \mathbb{B}^{\frac{N+2}{2}} \lbr \iint_{Q_{(\rho,\tht)}} v^{p-2+\ep} \ dz\rbr^{\frac{2}{N+2}} \frac{\mathbf{C}_1 \aa}{(1-\sigma)^2}\rbr^{\frac{N+2}{pN+4+2(p+1)}} \bigwedge 1,
\end{equation*}%
where $\mathbb{X} ={N(p-2)+2(p+1)}$ and $\Sigma = {\frac{N+2}{N(p-2)+2(p+1)}}$. Thus iterating the above estimate using \cref{lemma_iter_2} gives the desired estimate.

\section{Proof of \texorpdfstring{\cref{cor2}}.}
In \cref{5.11}, let us take $\al = \ga = 2-p$ and $\be = p-1+\ga = 1$ which are admissible since $\frac{2N}{N+2} < p \leq 2$. Then the analogue of \cref{sup_bnd_first} in this case  becomes
\begin{equation*}
        \sup_{Q_{(\sigma\rho,\sigma\tht)}} v \leq k =  \lbr \mathbb{B}^{\frac{N+2}{2}}\lbr \iint_{Q_{(\rho,\tht)}} v^{3} \ dz\rbr^{\frac{2}{N+2}} \frac{\mathbf{C}_1 \aa}{(1-\sigma)^2} \lbr \sup_{Q_{(\rho,\tht)}} v\rbr^{2-p} \rbr^{\frac{N+2}{(2-p)N+6}} \bigwedge 1,
\end{equation*}%
Note that the constants $\mathbb{B}$, $\mathbf{C}_1$ and $\aa$ are the analogous versions of those  defined in \cref{def_const_2} with this specific choices of  $\al,\be,\ga$, but by an abuse of notation, we still use the same symbols.

For any $\ep \in (2-p,3]$, we get the following analogue of \cref{5.28}
\begin{equation*}
M_{n} \leq 2^{{2n\Sigma}} M_{n+1}^{\frac{\mathbb{X} -2(\ep+p-2)}{\mathbb{X}}} \lbr  \mathbb{B}^{\frac{N+2}{2}} \lbr \iint_{Q_{(\rho,\tht)}} v^{\ep} \ dz\rbr^{\frac{2}{N+2}} \frac{\mathbf{C}_1 \aa}{(1-\sigma)^2}\rbr^{\frac{N+2}{(2-p)N+6}} \bigwedge 1,
\end{equation*}%
where $\mathbb{X} :={6+p(N+2)}$ and $\Sigma = {\frac{N+2}{(2-p)N+6}}$ which are defined in \cref{def_const_2} with $\al=\ga =2-p$ and $\be = 1$. We see that $\ep+p-2 >0 \Longleftrightarrow 2-p < \ep$, thus iterating the above estimate using \cref{lemma_iter_2} gives the desired estimate.

\section{Proof of Uniform \texorpdfstring{$C^{1,\alpha}$}. estimate - \texorpdfstring{\cref{holder}}.}
\label{section6}
As previously mentioned in Remark \ref{regularize}, the proof of Theorem \ref{holder} is via   regularization scheme  as in \cite{KM2}. With no further mention, this aspect will be implicit in our proofs.   With $Q_0 = B_{R_0} \times (-R_0^2,R_0^2)$, let us consider equations of the form 
\begin{equation}
    \label{main_holder}
    u_t - \dv \aa(\nabla u) = 0 \txt{on} 4Q_0,
\end{equation}
with $\aa(\zeta)$ satisfying the following structural assumptions for some $s \in (0,1]$:
\begin{equation}
    \label{structure_aa_holder}
        \begin{array}{c}
        |\aa(z)| + | \aa'(z)| (|z|^2 + s^2)^{\frac12} \leq C_1 (|z|^2 + s^2)^{\frac{p-1}{2}}\\
        \iprod{\aa'(z) \zeta}{\zeta} \geq C_0 (|z|^2 + s^2)^{\frac{p-2}{2}}|\zeta|^2,
    \end{array}
\end{equation}
where we have denoted $\aa'(z) := \frac{d\aa(z)}{dz}$. Let us fix the following constant:
\begin{equation}\label{sup_u}
    \mu_0:= \max\{1, \sup_{4Q_0} |\nabla u|\},
\end{equation}
then for any $z \in Q_0$, we consider the cylinder $Q_S^{\mu_0}(z) =B_{\mu_0^{-1} S}(x) \times (t - \mu_0^{-p}S^2, t + \mu_0^{-p} S^2)$ to be the largest cylinder such that $Q_S^{\mu_0}(z) \subset 4Q_0$ and $Q_S^{\mu_0}(z) \cap (4Q_0 \setminus 2Q_0) \neq \emptyset$. Note that this fixes the radius $S$ and is independent of  the point $z \in Q_0$.  As a consequence, we have the following observations:
\begin{description}
        \descitem{O1}{obs1} Since $Q_S^{\mu_0}(z) \subset 4Q_0$, we see that  $S \leq \min\{\mu_0,\mu_0^{p/2}\}3R_0$ must hold. 
    \descitem{O2}{obs2} Moreover, since $Q_S^{\mu_0}(z)$ has to go outside $2Q_0$, we note that $S \geq  \min\{ \mu_0, \mu_0^{p/2}\} R_0$.
\end{description}
\begin{center}
\begin{tikzpicture}[remember picture,scale=0.5,>=latex]
\coordinate  (O) at (0,0);
\draw[thick, draw=blue] (-4,-2) rectangle (0,2);
\draw[thick, draw=blue] (-7,-5) rectangle (3,5);
\draw[draw=black,draw=none, fill=black, opacity=0.1] (-3,-1.5) rectangle (-2,5);
\draw[draw=black] (-3,-1.5) rectangle (-2,5);
\draw[black, fill] (-2.5,1.75) circle [radius =2pt];
\draw[black, fill] (-7,1.75) circle [radius =2pt];
\draw[black, fill] (-2.5,5) circle [radius =2pt];
\draw[draw=orange, dashed, <->] (3,1.75) -- (0,1.75);
\draw[draw=orange, dashed, <->] (-1,5) -- (-1,2);
\node  at (1.5,1.45) {\tiny $3R_0$};
\node  at (-0.4,3.5) {\tiny $15R_0^2$};

\draw[draw=teal, dashed, <->] (-4,-2.5) -- (0,-2.5);
\draw[draw=teal, dashed, <->] (-7,-5.5) -- (3,-5.5);
\draw[draw=teal, dashed, <->] (-3,-1) -- (-2,-1);
\draw[draw=teal, dashed, <->] (-4.5,-2) -- (-4.5,2);
\draw[draw=teal, dashed, <->] (-7.5,-5) -- (-7.5,5);
\draw[draw=teal, dashed, <->] (-1.5,-1.5) -- (-1.5,5);
\node  at (-1.3,-3) {\scriptsize $2R_0$};
\node  at (-1.3,-6) {\scriptsize $8R_0$};

\node  at (-5.4,0) {\scriptsize $2(R_0)^2$};
\node  at (-8.6,0) {\scriptsize $2(4R_0)^2$};

\node  at (-2.5,-0.7) {\tiny $2\mu_0^{-1} S$};
\node  at (-0.6,1) {\tiny $2\mu_0^{-p}S^2$};
\node  at (-2.5,1.5) {\tiny $(x,t)$};
\node  at (-6.4,1.4) {\tiny $(\bar{x},t)$};
\node  at (-2.5,5.3) {\tiny $(x,\bar{t})$};

\node  at (0.7,0) {$Q_0$};
\node  at (3.7,0) {$4Q_0$};
\node  at (-4,4) {\scriptsize $Q_S^{\mu_0}(z)$};
\end{tikzpicture}
\end{center}


 Since the proof will be independent of the point $z \in Q_0$, we shall ignore writing the location of the cylinder $Q_R^{\mu}$. The proof of gradient H\"older regularity requires two propositions which are given below.
\begin{proposition}\label{alt1}
    For some $\mu \leq \mu_0$ and $R \leq S$, there exists numbers $\nu \in (0,1/2)$ and $\kappa,\de \in (0,1)$ depending only on $(N,p,C_0,C_1)$ such that if 
    \begin{equation*}%
        \abs{\{z \in Q_R^{\mu} : |\nabla u(z)| < \mu/2\}}< \nu |Q_R^{\mu}| \txt{and}   \sup_{Q_R^{\mu}} |\nabla u| \leq \mu,\quad s\leq \mu,
    \end{equation*}%
    is satisfied, then the following conclusion follows:
    \begin{equation*}%
        \iint_{Q_{\de^{i+1}R}^{\mu}} |\nabla u - \avgs{\nabla u}{Q_{\de^{i+1}R}^{\mu}}|^2 \ dz \leq \ka \de^{N+2} \iint_{Q_{\de^{i}R}^{\mu}} |\nabla u - \avgs{\nabla u}{Q_{\de^{i}R}^{\mu}}|^2 \ dz
    \end{equation*}%
    for all $i \in \mathbb{Z}$.
\end{proposition}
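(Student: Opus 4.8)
The statement is a "first alternative" Campanato-type decay estimate: under a measure-theoretic smallness assumption on the set where $|\nabla u|$ is below $\mu/2$, together with the bound $\sup|\nabla u|\le\mu$ (and $s\le\mu$), the equation \cref{main_holder} behaves on $Q_R^\mu$ like a uniformly parabolic equation with smooth coefficients, and hence the spatial gradient has the expected excess decay. The natural route is a compactness/contradiction argument built on the intrinsic scaling $Q_R^\mu$: rescale $u$ to a function $\tilde u$ on a fixed cylinder $Q_1$ (setting $\tilde u(y,\tau) = \frac{\mu}{R}\,u(x+\mu^{-1}Ry,\, t+\mu^{-p}R^2\tau)$, so that $\nabla\tilde u = \mu^{-1}\nabla u$ and the structure conditions \cref{structure_aa_holder} are preserved with $s$ replaced by $s/\mu \le 1$), so that the hypotheses become $\sup_{Q_1}|\nabla\tilde u|\le 1$, $|\{|\nabla\tilde u|<1/2\}\cap Q_1| < \nu|Q_1|$, and $s/\mu\le 1$. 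Everything then reduces to a single scale-invariant statement on $Q_1$.

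\textbf{Key steps.} First I would set up the rescaling and reduce to $Q_1$ as above, noting the decay estimate is also scale-invariant so that proving the $i=0$ case on $Q_1$ suffices (the general $i\in\mathbb Z$ case follows by reapplying the rescaled statement on $Q_{\delta^i R}^\mu$, since $\sup|\nabla u|\le\mu$ persists on smaller cylinders and, crucially, the smallness of the bad set is \emph{propagated} — this is exactly why $\nu$ must be chosen small enough that after one step the hypothesis is recovered, which ties steps together into an induction on $i$). Second, I would run a compactness argument: suppose the conclusion fails for every $\nu,\kappa,\delta$; take sequences $u_k$, $\aa_k$, $s_k$ with $|\{|\nabla u_k|<1/2\}\cap Q_1|\to 0$, $\sup_{Q_1}|\nabla u_k|\le 1$; use the energy estimate \cref{energy} (applied to the rescaled equation, with a Caccioppoli choice of $f$) together with the gradient bound to get uniform $L^2$ bounds on $\nabla^2 u_k$ and on $\partial_t u_k$ in a suitable negative space, hence (via Aubin–Lions) strong $L^2_{loc}$ convergence $\nabla u_k \to \nabla u_\infty$; the measure condition forces $|\nabla u_\infty|\ge 1/2$ a.e., so the limit $u_\infty$ solves a \emph{uniformly parabolic} linear equation $\partial_t u_\infty = \dv(\mathbb A_\infty \nabla u_\infty)$ with bounded measurable — in fact, after passing to the limit of $\aa_k'$, constant-type — coefficients, where uniform ellipticity holds because $(|\zeta|^2+s_k^2)^{(p-2)/2}$ is bounded above and below on the region $\frac12\le|\zeta|\le 1$ uniformly in $k$ and $s_k\in[0,1]$. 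Third, invoke classical linear parabolic regularity ($C^{1,\alpha}$, indeed $C^\infty$ in space, estimates for $\nabla u_\infty$) to obtain the excess-decay inequality $\fiint_{Q_\delta}|\nabla u_\infty - \langle\nabla u_\infty\rangle_{Q_\delta}|^2 \le C_*\delta^2\fiint_{Q_1}|\nabla u_\infty - \langle\nabla u_\infty\rangle_{Q_1}|^2$; fix $\delta$ small so that $C_*\delta^2 \le \tfrac12\delta^{N+2}$, say, and then strong convergence transfers this (with constant $\kappa$ slightly worse than $\tfrac12$, still in $(0,1)$) to $u_k$ for $k$ large, contradicting the assumed failure. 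Finally I would check the induction closes: with $\kappa,\delta$ now fixed, choose $\nu$ small enough (using Chebyshev and the decay of the excess) that $|\{|\nabla u|<\mu/2\}\cap Q_{\delta R}^\mu| < \nu|Q_{\delta R}^\mu|$ follows from the step-$0$ hypotheses, so that \cref{alt1} may be iterated to yield all $i$.

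\textbf{Main obstacle.} The delicate point is the passage to the limit in the nonlinear term and the verification that the limit equation is genuinely uniformly parabolic: one must be careful that the bad set $\{|\nabla u_k| < 1/2\}$ shrinking in measure actually prevents $|\nabla u_\infty|$ from degenerating on a set of positive measure (here the \emph{strong} $L^2$ convergence of gradients is essential, not merely weak — hence the need for the second-derivative energy bound from \cref{energy}, which is where the a priori $|\nabla u|\in L^\infty_{loc}$ hypothesis of \cref{holder} and the lower structure bound in \cref{structure_aa_holder} are used), and that the ellipticity constants $C_0(\tfrac14+s_k^2)^{(p-2)/2}$, $C_1(1+s_k^2)^{(p-2)/2}$ stay pinched between two positive constants \emph{uniformly in $p$ on both sides of $2$ and uniformly in $s_k\in[0,1]$} — this uniformity, avoiding any split into singular/degenerate cases, is precisely the novelty flagged in the introduction, and it is what makes the choice of the intrinsic cylinder $Q_R^\mu$ (with the $\mu^{-1}$ in space and $\mu^{-p}$ in time) the right one. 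A secondary technical nuisance is bookkeeping the $i\in\mathbb Z$ range and ensuring the cylinders $Q_{\delta^i R}^\mu$ stay inside the domain where the hypotheses are available, which is handled by the scale-invariance and observations \descref{obs1}{O1}--\descref{obs2}{O2}.
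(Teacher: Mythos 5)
Your route is genuinely different from the paper's, and as sketched it has two real gaps. The first is the compactness mechanism itself: you derive strong $L^2$ convergence of $\nabla u_k$ from a uniform second-derivative bound obtained via \cref{energy}, but that estimate only controls the weighted quantity $\iint (s_k^2+|\nabla u_k|^2)^{\frac{p-2}{2}}|\nabla^2 u_k|^2$, and for $p>2$ the weight degenerates exactly on the bad set $\{|\nabla u_k|<1/2\}$ (with $s_k$ possibly arbitrarily small), so no uniform $L^2$ bound on $\nabla^2 u_k$ over the whole cylinder follows; Aubin--Lions then gives compactness of $u_k$, not of $\nabla u_k$, and without strong gradient convergence you cannot pass the measure condition to the limit to conclude $|\nabla u_\infty|\ge 1/2$ a.e. Ironically it is the degenerate case that breaks, which is the case the unified scheme is supposed to handle. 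The paper never needs second derivatives on the bad set: it runs a DeGiorgi iteration (\cref{prop3.1}, \cref{lemma3.1}) on the truncations $(w_{x_i}-k)_-$ with levels $k\ge\mu/4$, where by \cref{9.56} the weight is comparable to $\mu^{p-2}$ regardless of the sign of $p-2$; this upgrades the measure hypothesis to the pointwise bound $|\nabla u|\ge\mu/4$ on $Q_{R/2}^{\mu}$, after which the differentiated equation is uniformly parabolic with merely measurable coefficients (\cref{w_x_i_sol}, \cref{lemma3.2}) and the linear De Giorgi--Nash--Moser decay of \cref{lemma_linear} applies.

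The second gap is the claim that the estimate holds for all $i$: your one-step lemma needs the measure hypothesis at scale $\de^{i}R$ with the same $\nu$, and the trivial inclusion only yields the bound $\nu\de^{-(N+2)}|Q_{\de R}^{\mu}|$; the proposed recovery via ``Chebyshev plus excess decay'' requires both that the excess be small compared to $\mu^2$ and that $\abs{\avgs{\nabla u}{Q_{\de R}^{\mu}}}$ stay above $\mu/2$ (cancellation of the vector average must be excluded), neither of which is available at the first step, so the induction does not close as written. The paper gets all scales for free: once the pointwise nondegeneracy holds on $Q_{R/2}^{\mu}$, the linearized equation is uniformly parabolic on every smaller cylinder and \cref{lemma_linear} iterates with no further measure hypotheses (\cref{prop3.3}). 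Two smaller slips: the requirement $C_*\de^{2}\le\tfrac12\de^{N+2}$ is impossible for small $\de$ --- the $\de^{N+2}$ in the statement is just the volume ratio $|Q_{\de^{i+1}R}^{\mu}|/|Q_{\de^{i}R}^{\mu}|$, so in normalized form you only need $C_*\de^{2\be}\le\ka<1$; and under \cref{structure_aa_holder} the limit equation, after differentiation, has only bounded measurable coefficients, so you get the small H\"older exponent of \cref{lemma_linear}, not $C^\infty$ in space (which, fortunately, is all that is needed).
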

\begin{proposition}\label{alt2}
   For some $\mu \leq \mu_0$ and $R \leq S$ with $\nu$ as fixed in \cref{alt1}, there exists numbers $\sigma , \eta \in (0,1)$ such that if 
    \begin{equation*}%
        \abs{\{z \in Q_R^{\mu} : |\nabla u(z)| < \mu/2\}}\geq \nu |Q_R^{\mu}| \txt{and} s \leq \mu,
    \end{equation*}%
    is satisfied, then the following conclusion follows:
    \begin{equation*}%
        |\nabla u(z)| \leq \eta \mu \qquad \text{for all}\  z \in Q_{\sigma R}^{\mu}.
    \end{equation*}%
\end{proposition}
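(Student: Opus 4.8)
The plan is to run the ``second alternative'' of De~Giorgi's scheme for the gradient: the measure surplus $\abs{\cbrk{z\in Q^\mu_R:v(z)<\mu/2}}\ge\nu|Q^\mu_R|$, where $v:=|\nabla u|$, must be turned into a strictly sub-$\mu$ bound for $v$ on a smaller intrinsic cylinder. Throughout I would use, as is the case in the iteration in which \cref{alt2} is invoked, that $\sup_{Q^\mu_R}|\nabla u|\le\mu$. Differentiating \cref{main_holder} shows $\nabla u$ solves a linear system with coefficient matrix $\aa'(\nabla u)$, and convexity of $\zeta\mapsto|\zeta|$ together with \cref{structure_aa_holder} makes $v$ a nonnegative weak subsolution of a scalar divergence-form parabolic equation, for which a Caccioppoli inequality for $(v-k)_+$ (the analogue of \cref{energy} for the structure \cref{structure_aa_holder}) holds. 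The point that keeps everything uniform in $p$ is that on $\cbrk{v>k}$ with $k\ge\mu/2$, and since $s\le\mu$, one has $(\mu/2)^2\le|\nabla u|^2+s^2\le 2\mu^2$, so $(|\nabla u|^2+s^2)^{(p-2)/2}$ is comparable to $\mu^{p-2}$ in both the singular and the degenerate range; together with the intrinsic time-scale $\mu^{-p}R^2$ this normalises every energy estimate below to a scale-invariant form, which is precisely the role of the new scaling.

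First I would pass from the cylinder to a time slice and propagate forward. By Fubini the hypothesis yields a level $t_0$ in the lower half of the time interval of $Q^\mu_R$ with $\abs{\cbrk{x\in B_{\mu^{-1}R}:v(x,t_0)\ge\mu/2}}\le\lbr 1-\tfrac{\nu}{2}\rbr|B_{\mu^{-1}R}|$. Inserting this into the logarithmic (De~Giorgi) estimate built from the Caccioppoli inequality for $(v-k)_+$ with $k$ near $\mu$---as in \cite[Chapter~IX]{DB93}, but with the coefficient replaced by the comparable constant $\mu^{p-2}$---propagates smallness of the super-level set forward in time: there are $\la,c_0\in(0,1)$, depending only on $(N,p,C_0,C_1)$, with $\abs{\cbrk{x:v(x,t)\ge(1-\la)\mu}\cap B_{\mu^{-1}R}}\le\lbr 1-\tfrac{\nu}{4}\rbr|B_{\mu^{-1}R}|$ for all $t\in(t_0,t_0+c_0\mu^{-p}R^2)$. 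Next I would amplify this deficit: iterating the Poincar\'e inequality \cref{Poincare} over the levels $k_j:=(1-\la)\mu+\la\mu(1-2^{-j})$, combined with Cauchy--Schwarz and the Caccioppoli bound for $\iint|\nabla(v-k_j)_+|^2$, produces a recursion forcing $\abs{\cbrk{v\ge(1-2^{-j})\mu}}$ to decay geometrically in $j$ on a slightly smaller sub-cylinder. Thus, for any prescribed $\nu_1>0$ there are $j_{*}=j_{*}(N,p,C_0,C_1,\nu_1)$ and $\sigma_1=\sigma_1(N,p,C_0,C_1,\nu_1)\in(0,1)$ with $\abs{\cbrk{v\ge(1-2^{-j_{*}})\mu}\cap Q^\mu_{\sigma_1 R}}\le\nu_1|Q^\mu_{\sigma_1 R}|$.

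Then I would close with one ordinary De~Giorgi iteration. Set $\eta:=1-2^{-j_{*}-1}\in(0,1)$ and $\sigma:=\sigma_1/2$, and on the cylinders $Q_n$ shrinking from $Q^\mu_{\sigma_1 R}$ to $Q^\mu_{\sigma R}$ run the iteration for the subsolution $v$ with levels $l_n\downarrow\eta\mu$ and $l_0:=(1-2^{-j_{*}})\mu$. After the intrinsic rescaling that makes $Q^\mu_{\sigma_1 R}$ a unit cylinder, the Caccioppoli estimate and the parabolic Sobolev embedding \cref{par_sob_emb} give $Y_{n+1}\le Cb^nY_n^{1+\kappa}$ for $Y_n:=|Q_n|^{-1}\mu^{-2}\iint_{Q_n}(v-l_n)_+^2$, with $C,b,\kappa$ depending only on $(N,p,C_0,C_1)$ and $Y_0\le\nu_1$. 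Choosing $\nu_1\le C^{-\frac{1}{\kappa}}b^{-\frac{1}{\kappa^{2}}}$ as required in \cref{iteration} forces $Y_n\to 0$, i.e.\ $v\le\eta\mu$ a.e.\ on $Q^\mu_{\sigma R}$; since every constant above depends only on $(N,p,C_0,C_1)$, so do $\sigma$ and $\eta$, which is the assertion.

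I expect the main obstacle to be the pair of measure steps in the second paragraph. As $\nu$ is fixed (and may be tiny), no single iteration can lower $\sup v$; one must first upgrade ``$\cbrk{v<\mu/2}$ fills a fixed fraction of the whole cylinder'' to ``$\cbrk{v>(1-\text{small})\mu}$ fills an arbitrarily small fraction of a sub-cylinder at \emph{every} time slice''. Carrying this out needs the forward time-propagation (logarithmic) estimate and careful tracking of the intrinsic geometry as $\mu$ and the radii change, and it is exactly there that the scaling $Q^\mu_R=B_{\mu^{-1}R}\times(t-\mu^{-p}R^2,t+\mu^{-p}R^2)$---which makes the coefficient $\sim\mu^{p-2}$ and absorbs it into the time variable---lets the argument run identically for $p<2$ and $p\ge 2$.
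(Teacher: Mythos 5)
Your scheme is, at its core, the same De Giorgi ``second alternative'' machinery the paper uses: a good time slice obtained by Fubini (the paper's \cref{cht}), a logarithmic estimate to propagate the measure information forward in time (\cref{log_estimate} and \cref{lemma921}), a De Giorgi--Poincar\'e iteration over levels using \cref{Poincare} to make the super-level set arbitrarily small (\cref{lemma9.23}), and a final De Giorgi iteration via \cref{par_sob_emb} and \cref{iteration} (\cref{main_prop}); the uniformity in $p$ comes in both cases from the fact that near the top levels $k\geq \mu/2$, with $s\leq\mu$ and the a priori bound $\sup_{Q_R^\mu}|\nabla u|\leq\mu$ available from the iteration, the coefficient $\aa'(\nabla u)$ is two-sidedly comparable to $\mu^{p-2}$, which the intrinsic cylinders $Q^\mu_R$ absorb. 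The one genuine difference is the object you iterate on: you run the scheme directly on $v=|\nabla u|$, claiming it is a subsolution of the linearized equation, whereas the paper works component-wise on $w_{x_i}$ (rescaled as in \cref{9.69}), for which the differentiated equation \cref{w_x_i} is an honest linear equation and uniform ellipticity on the relevant set follows from $|\nabla w|\geq w_{x_i}\geq \tfrac1{4A}$; it then needs both one-sided truncations (\cref{alt2_cond}, \cref{prop3.1} and \cref{prop_upper_bnd}) and the bookkeeping converting componentwise bounds back into a statement about $|\nabla u|$. Your scalar route is more direct for the stated conclusion $|\nabla u|\leq\eta\mu$, but the step you state loosely is exactly the one that needs care: $|\nabla u|$ itself is not smooth at critical points, so the clean way is to show $W=|\nabla u|^2$ (or $(s^2+|\nabla u|^2)^{1/2}$) satisfies $\partial_t W-\dv\lbr \aa'(\nabla u)\nabla W\rbr\leq 0$ weakly, which uses positivity of the symmetric part of $\aa'$ and must be justified through the regularization of \cref{regularize}, and then to observe that all test functions in the log/Caccioppoli/De Giorgi steps are supported in $\{v>k\}$, $k\geq\mu/2$, where the ellipticity is uniform. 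With that point made precise, your argument closes and yields universal $\sigma,\eta$ exactly as the paper's does.
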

\begin{remark}\label{remark8.3}
    Let us explain a little more about the condition $s \leq \mu$ in \cref{alt2}. Since we apply \cref{alt2} starting from $Q_{S}^{\mu_0}$, we automatically have $\sup_{Q_S^{\mu_0}} |\nabla u| \leq \mu_0$ and since $ s \in (0,1) \Longrightarrow s \leq \mu_0$,  we get $$s + \sup_{Q_S^{\mu_0}} |\nabla u| \leq 2\mu_0,$$ which is the hypothesis needed in \cref{prop3.1}. From this, we get that $\sup_{Q_{R_1}^{\mu_1}} |\nabla u| \leq \mu_1$, but there is no bound obtainable for $s$ in terms of $\mu_1$. Thus we need to assume $s \leq \mu_1$ in order to satisfy the hypothesis from \cref{prop3.1} to be able to iterate \cref{alt2}. We proceed this way until either the measure density hypothesis fails or $s \geq \mu_i$. 
    
    If the measure density condition fails first at step $i_0$ and  $s \leq \mu_{i_0}$, then the conclusion of \cref{alt2} at step $i_0$ gives $\sup_{Q_{R_{i_0}}^{\mu_{i_0}}} |\nabla u|\leq \mu_{i_0}$, which satisfies the hypothesis needed to apply \cref{alt1}. 
    
    On the other hand if at step $i_0$, if  $s > \mu_{i_0}$, then we are automatically in the uniformly parabolic situation  after the rescaling as in \eqref{w_rescale} and we can directly apply \cref{lemma3.2} to get the conclusion of \cref{alt1}. This is possible since we have 
    \[
        \mu_{i_0} \leq s \leq s + \sup_{Q_{R_{i_0}}^{\mu_{i_0}}} |\nabla u|\leq \mu_{i_0-1} + \mu_{i_0} = \lbr \frac{1+\eta}{\eta}\rbr \mu_{i_0},
    \]
    which immediately gives the uniform ellipticity condition in  \cref{w_x_i_sol}. Recall that $\eta$ is a universal constant. 
\end{remark}

\subsection{Covering argument}
  Let us first fix some notation needed for the proof:
\begin{definition}\label{iter_const_def}
   Let $\mu_0$ be as in \cref{sup_u} and $\eta$, $\sigma$ be as in \cref{alt2}. Then let us denote
    \begin{equation*}
        \mu_{n+1} := \eta \mu_n,   \qquad R_0 := S, \qquad R_{n+1} := c_0  R_n = c_0^{n+1} S,
    \end{equation*}
    where $c_0 := \frac12\sigma \min\{{\eta}, {\eta}^{p/2}\}$. Here we note that $c_0 \in (0,1)$ since $\sigma,\eta \in (0,1)$ and $\mu \geq 1$.
\end{definition}
We have the following result regarding the intrinsic geometry.
\begin{claim}\label{cyl_incl}
    With the notation as in \cref{iter_const_def}, we have the inclusion $Q_{c_0 R}^{\eta \mu} \subset {Q_{\sigma R}^{\mu }}$ for any $R>0$.
\end{claim}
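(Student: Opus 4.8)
The plan is to unwind the definitions on both sides of the claimed inclusion $Q_{c_0 R}^{\eta\mu} \subset Q_{\sigma R}^{\mu}$ and compare the spatial radii and the temporal half-lengths separately. Recall from notation \descref{not2}{not2} that for a parameter $\la \geq 1$ and a base radius $r$, the cylinder $Q_r^{\la}(z)$ equals $B_{\la^{-1}r}(x) \times (t - \la^{-p}r^2, t + \la^{-p}r^2)$. So the left-hand side $Q_{c_0 R}^{\eta\mu}$ is the product of the ball of radius $(\eta\mu)^{-1} c_0 R$ with the time interval of half-length $(\eta\mu)^{-p}(c_0 R)^2$, while the right-hand side $Q_{\sigma R}^{\mu}$ is the product of the ball of radius $\mu^{-1}\sigma R$ with the time interval of half-length $\mu^{-p}(\sigma R)^2$. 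Since both cylinders are centered at the same point $z$, the inclusion holds if and only if the spatial radius on the left is at most that on the right and the temporal half-length on the left is at most that on the right.

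First I would check the spatial condition: we need $(\eta\mu)^{-1} c_0 R \leq \mu^{-1}\sigma R$, i.e.\ $c_0 \leq \eta\sigma$. By the definition of $c_0$ in \cref{iter_const_def}, $c_0 = \tfrac12\sigma\min\{\eta,\eta^{p/2}\} \leq \tfrac12\sigma\eta \leq \sigma\eta$, since $\eta \in (0,1)$ forces $\min\{\eta,\eta^{p/2}\} \leq \eta$. So the spatial inclusion is immediate. Next I would check the temporal condition: we need $(\eta\mu)^{-p}(c_0 R)^2 \leq \mu^{-p}(\sigma R)^2$, i.e.\ $\eta^{-p} c_0^2 \leq \sigma^2$, i.e.\ $c_0 \leq \sigma \eta^{p/2}$. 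Again from the definition, $c_0 = \tfrac12\sigma\min\{\eta,\eta^{p/2}\} \leq \tfrac12\sigma\eta^{p/2} \leq \sigma\eta^{p/2}$. Hence both conditions hold and the inclusion follows. (One should also note $\eta\mu \geq 1$ so that $Q^{\eta\mu}_{\cdot}$ is a legitimate intrinsic cylinder in the sense of \descref{not2}{not2}; this is recorded after \cref{iter_const_def} — indeed $\mu \geq 1$ and in the iteration the relevant factor combining with $\eta$ keeps the scaling parameter $\geq 1$, or more simply we only ever use the inclusion as a set-theoretic containment, for which the formula still makes sense.)

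There is no real obstacle here; the claim is a bookkeeping statement, and the only mild subtlety is remembering that the intrinsic time-scaling carries the factor $\la^{-p}$ (not $\la^{-2}$), which is exactly why the constant $c_0$ is defined with $\min\{\eta,\eta^{p/2}\}$ rather than simply $\eta$ — the two cases $p \geq 2$ and $p < 2$ determine whether $\eta$ or $\eta^{p/2}$ is the binding constraint, and taking the minimum handles both uniformly. I would write the proof as a short two-line verification: reduce to the two scalar inequalities $c_0 \leq \sigma\eta$ and $c_0 \leq \sigma\eta^{p/2}$, and observe both are consequences of $c_0 = \tfrac12\sigma\min\{\eta,\eta^{p/2}\}$.
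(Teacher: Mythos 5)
Your proof is correct and is essentially the same as the paper's: both reduce the inclusion to the two scalar inequalities $c_0 \leq \sigma\eta$ (spatial) and $c_0 \leq \sigma\eta^{p/2}$ (temporal), and then note both follow from the definition $c_0 = \tfrac12\sigma\min\{\eta,\eta^{p/2}\}$ with $\eta,\sigma \in (0,1)$.
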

\begin{proof}
    In order for this to hold, we must have the following:
    \begin{equation*}%
            \left\{\begin{array}{rcl} (\eta \mu )^{-1} c_0 R & \leq & \mu^{-1}\sigma  R\\ (\eta \mu )^{-p} (c_0R)^2 & \leq & \mu^{-p} (\sigma R)^2 \end{array}\right.
             \Longleftrightarrow \left\{\begin{array}{rcl}  c_0 & \leq &  \eta \sigma \\  c_0^2 & \leq & \eta^{p}\sigma ^2 \end{array}\right.
    \end{equation*}%
From the restriction $\sigma< \eta <1$ and the choice $c_0=\frac12\sigma \min\{{\eta}, {\eta}^{p/2}\}$, we see that the above two restrictions hold.
\end{proof}

\begin{claim}\label{claim2.6}
    We have $R_n = c_0^n S$ and $\eta^n = \lbr \frac{R_n}{S} \rbr^{\al_1}$ where $\al_1^{-1} = - \log_{\frac{1}{\eta}} c_0$.
\end{claim}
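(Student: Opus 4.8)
The plan is to read off both identities directly from the recursion defining the radii in \cref{iter_const_def}. For the first identity there is essentially nothing to do: since $R_0 = S$ and $R_{n+1} = c_0 R_n$, a one-line induction gives $R_n = c_0^n S$ (this is in fact already recorded in the statement of \cref{iter_const_def}).

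For the second identity I would start from $R_n / S = c_0^n$ and raise both sides to the power $\al_1$, obtaining $(R_n/S)^{\al_1} = c_0^{n\al_1}$. Hence it suffices to verify that $c_0^{\al_1} = \eta$, since then $\eta^n = c_0^{n\al_1} = (R_n/S)^{\al_1}$ for every $n$. Taking natural logarithms — legitimate because $c_0,\eta$ are positive, being elements of $(0,1)$ by \cref{iter_const_def} and \cref{alt2} — this is equivalent to $\al_1 \ln c_0 = \ln \eta$, that is, $\al_1 = \ln\eta/\ln c_0$. On the other hand, the defining relation $\al_1^{-1} = -\log_{1/\eta} c_0$ unwinds through the change-of-base formula to
\[
  \al_1^{-1} = -\frac{\ln c_0}{\ln(1/\eta)} = -\frac{\ln c_0}{-\ln\eta} = \frac{\ln c_0}{\ln\eta},
\]
so indeed $\al_1 = \ln\eta/\ln c_0$, which is exactly what was needed. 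Note that $\al_1 > 0$, because $\ln c_0$ and $\ln\eta$ are both negative ($c_0,\eta \in (0,1)$), so the exponent in the statement is well defined and positive.

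There is no genuine obstacle here; the claim is a bookkeeping identity about the geometric sequences $R_n$ and $\mu_n = \eta^n\mu_0$. The only point deserving a moment's care is the sign of the logarithms: since \emph{both} $c_0$ and $\eta$ lie strictly below $1$, the ratio $\ln c_0/\ln\eta$ is positive, and this is precisely why the minus sign appears in the definition $\al_1^{-1} = -\log_{1/\eta} c_0$ and why $\al_1$ turns out to be a positive exponent, as it must be for the subsequent Hölder-type iteration.
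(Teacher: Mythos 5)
Your proposal is correct and follows essentially the same route as the paper: both unwind $\al_1^{-1} = -\log_{1/\eta} c_0$ via the change-of-base formula to $\al_1 = \log\eta/\log c_0$ and then check that $\eta^n = (R_n/S)^{\al_1}$ is just a restatement of $R_n = c_0^n S$. Your remark on the signs of the logarithms is a harmless (and accurate) addition.
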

\begin{proof}
    From direct computation, we see that $\frac{1}{\al_1} = -\frac{\log c_0}{\log \frac{1}{\eta}} = \frac{-\log c_0}{-\log \eta} = \frac{\log c_0}{\log \eta}.$ Thus we get
    \begin{equation*}
         \eta^n = \lbr \frac{R_n}{S} \rbr^{\frac{\log \eta}{\log c_0}}
         \Longleftrightarrow  n \log \eta = \frac{\log \eta}{\log c_0} \log \lbr \frac{R_n}{S}\rbr 
         \Longleftrightarrow   \log c_0^n =  \log \lbr \frac{R_n}{S}\rbr 
         \Longleftrightarrow   S c_0^n =  R_n.
    \end{equation*}
This proves the claim.
\end{proof}

\subsubsection{Switching radius}
Since we have to study the interplay between \cref{alt1} and \cref{alt2}, we need to define what is known as the switching radius:
\begin{definition}[Switching Radius]\label{switch_rad}
    With the notation from \cref{iter_const_def}, suppose the hypothesis of \cref{alt2} holds at level $(R_1,\mu_1),(R_2,\mu_2) \ldots (R_i,\mu_i)$, i.e., 
\begin{equation}\label{8.6}
    |\{ Q_{R_i}^{\mu_i} : |\nabla u| < \mu_i/2\}| \geq \nu |Q_{R_i}^{\mu_i}| \txt{and} s \leq \mu_i,
\end{equation}
holds, then applying \cref{alt2}, we conclude
\begin{equation*}%
    |\nabla u| \leq \eta \mu_{i} = \mu_{i+1} \txt{on} Q_{\sigma R_{i}}^{\mu_{i}} \overset{\text{\cref{cyl_incl}}}{\supseteq} Q_{R_{i+1}}^{\mu_{i+1}}.
\end{equation*}%
Continuing this way, we denote  $n_0$ (called the switching number) and radius $R_{n_0}$ (called the switching radius) to be the first instance where one of the conditions from  \cref{8.6} fails, i.e., 
\begin{equation}\label{8.8eqn}
    |\{ Q_{R_{n_0}}^{\mu_{n_0}} : |\nabla u| < \mu_{n_0}/2\}| < \nu |Q_{R_{n_0}}^{\mu_{n_0}}| \txt{or} \mu_{n_0} < s.
\end{equation}
 or both holds for the first time. 
\end{definition}

From \cref{switch_rad}, we see that the following estimates hold:
\begin{itemize}
\item For $n= 1,\ldots,n_0$, due to \cref{alt2} and Claim \ref{cyl_incl} there holds\begin{equation}\label{2.12}
    \sup_{Q_{R_n}^{\mu_n}} |\nabla u| \leq \mu_n = \eta^n \mu_0.
\end{equation}
\item We also have for any $n = 0,1,\ldots, n_0$, 
\begin{equation} \label{claim2.7}
  \sup_{Q_{R_{n}}^{\mu_n}}|\nabla u| \leq \mu_0 \lbr \frac{R_n}{S}\rbr^{\al_1}.\end{equation}

    This follows from \cref{2.12} and \cref{claim2.6}.
    \item If at the switching number $n_0$ it happens that $s> \mu_{n_0}$, then we have following bounds:
\[
    \sup_{Q_{R_{n_0}}^{\mu_{n_0}}} |\nabla u| \leq \mu_{n_0} \txt{and} \mu_{n_0} \leq s \leq s+ \sup_{Q_{R_{n_0}}^{\mu_{i_0}}} |\nabla u| \leq \mu_{n_0-1} +\mu_{n_0} \leq  \left( \frac{1+\eta}{\eta} \right) \mu_{n_0}
\]
which in particular implies that the hypothesis required to apply \cref{lemma3.2} is satisfied after rescaling as in \eqref{w_rescale}. Thus even in this case, we can directly apply \cref{lemma3.2}  to get the conclusion given in \cref{8.10} below, see more details in \cref{remark8.3}.    

\item If  however $s \leq \mu_{n_0}$ and instead the measure condition  from  \cref{8.8eqn} holds which is    the hypothesis required in  \cref{alt1},  then we have,
\begin{equation}\label{8.10}
    \fiint_{Q_{\de^iR_{n_0}}^{\mu_{n_0}}} |\nabla u - (\nabla u)_i|^2 \ dz\leq \ka^i \fiint_{Q_{R_{n_0}}^{\mu_{n_0}}} |\nabla u - (\nabla u)_0|^2 \ dz \overset{\redlabel{8.10a}{a}}{\leq} \ka^i   \mu_{n_0}^2 \txt{for} i=1,2,\ldots,
\end{equation}
where to obtain \redref{8.10a}{a}, we have used  the notation $(\nabla u)_i := \fiint_{Q_{\de^iR_{n_0}}^{\mu_{n_0}}} \nabla u \ dz$ and the bound   $ \sup_{Q_{R_{n_0}}^{\mu_{n_0}}} |\nabla u| \leq \mu_{n_0}$, which  holds due to \cref{2.12}. Moreover, we made use of  $\fiint |f - (f)|^2 \ dz = \inf_{a \in \RR} \fiint |f-a|^2\ dz \leq \fiint |f|^2$.

\end{itemize}

\begin{remark}
    From triangle inequality, for $i = 1,2,\ldots$, we also have 
    \begin{equation}\label{8.10_trian}
    \begin{array}{rcl}
    \fiint_{Q_{\de^{i-1}R_{n_0}}^{\mu_{n_0}}} |\nabla u - (\nabla u)_i|^2 \ dz & \leq & \fiint_{Q_{\de^{i-1}R_{n_0}}^{\mu_{n_0}}} |\nabla u - (\nabla u)_{i-1}|^2 \ dz + \frac{|{Q_{\de^{i-1}R_{n_0}}^{\mu_{n_0}}}|}{|{Q_{\de^{i}R_{n_0}}^{\mu_{n_0}}}|}\fiint_{Q_{\de^{i-1}R_{n_0}}^{\mu_{n_0}}} |\nabla u - (\nabla u)_{i-1}|^2 \ dz\\
    & \overset{\cref{8.10}}{\leq} &  \ka^{i-1} \lbr 1 + \frac{1}{\de^{N+2}} \rbr \mu_{n_0}^2.
    \end{array}
\end{equation}

\end{remark}

\begin{lemma}\label{lemma8.8}
Let $\kappa, \de, \eta$ be as given in \cref{alt1} and \cref{alt2}, then we have the following important consequences from \cref{2.12,8.10}.
\begin{description}
    \descitem{C1}{conc1}  The sequence $\{(\nabla u)_i\}_{i=1}^{\infty}$ from \cref{8.10} is a Cauchy sequence and converges to $\nabla u(z_0)$ where  $z_0=(x_0,t_0)$ is the center of the parabolic cylinders considered in \cref{2.12} and \cref{8.10}.
    \descitem{C2}{conc2} The following decay estimate holds:
    \begin{equation*}%
     |\nabla u(x_0,t_0) - (\nabla u)_i| \leq C \ka^i \mu_{n_0}^2 \txt{for all} i = 1,2\ldots.
 \end{equation*}%
    \descitem{C3}{conc3}  For any $0<\rho<R_{n_0}$ with $(\nabla u)_{\rho} := \fiint_{Q_{\rho}^{\mu_{n_0}}(x_0,t_0)} \nabla u \ dz$, the following decay estimate holds:
    \begin{equation*}%
     |\nabla u(x_0,t_0) - (\nabla u)_{\rho}| \leq C(\de) \ka^{i} \mu_{n_0}^2,
 \end{equation*}%
 where $i \in \ZZ$ is such that $\de^i R_{n_0} \leq \rho \leq \de^{i-1} R_{n_0}$.
    \descitem{C4}{conc4} Let us define $\al_3 := \min \{ \al_1, \al_2/2\}$ where $\al_2 := -\log_{\frac{1}{\eta}} \de$ and $\al_1$ is from \cref{claim2.6},  then  for any $0<\rho<S$,  with $(\nabla u)_{\rho} := \fiint_{Q_{\rho}^{\mu_{n_0}}(x_0,t_0)} \nabla u \ dz$, there holds
 \begin{equation*}%
     |\nabla u(x_0,t_0) - (\nabla u)_{\rho}| \leq C \mu_0 \lbr \frac{\rho}{S}\rbr^{\al_3}.
 \end{equation*}%
    \descitem{C5}{conc5} With $\al_3$ as defined in \descref{conc4}{C4}, for any $0<\rho\leq S$ with $(\nabla u)_{\rho} := \fiint_{Q_{\rho}^{\mu_{n_0}}(x_0,t_0)} \nabla u \ dz$, we also have 
     \begin{equation*}%
         \fiint_{Q_{\rho}^{\mu_{n_0}}} |\nabla u - (\nabla u)_{\rho}|^2 \ dz \leq C \mu_0^2 \lbr \frac{\rho}{S}\rbr^{2\al_3}.
     \end{equation*}%
\end{description}
\end{lemma}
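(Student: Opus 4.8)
The plan is to deduce all five assertions from the two facts already in hand: the $\de$-adic oscillation decay \cref{8.10} on the shrinking cylinders $Q_{\de^{i}R_{n_0}}^{\mu_{n_0}}(z_0)$, and the sup decay supplied by \cref{2.12} and \cref{claim2.7} on the intrinsic cylinders down to the switching radius $R_{n_0}$. Throughout I would use the same elementary bookkeeping already used for \cref{8.10_trian}: passing from $Q_{\de^{i}R_{n_0}}^{\mu_{n_0}}$ to $Q_{\de^{i+1}R_{n_0}}^{\mu_{n_0}}$, or to any intermediate cylinder $Q_{\rho}^{\mu_{n_0}}$ with $\de^{i+1}R_{n_0}\leq\rho\leq\de^{i}R_{n_0}$, changes the parabolic measure by at most the factor $\de^{\pm(N+2)}$, so the associated averages are comparable up to such factors. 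I would establish \descref{conc1}{C1}--\descref{conc5}{C5} in that order, each feeding the next.

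\emph{Convergence of the averages (\descref{conc1}{C1}, \descref{conc2}{C2}).} Since $(\nabla u)_{i+1}$ is the mean of $\nabla u$ over $Q_{\de^{i+1}R_{n_0}}^{\mu_{n_0}}\subset Q_{\de^{i}R_{n_0}}^{\mu_{n_0}}$, Jensen's inequality together with the measure comparison and \cref{8.10} give $|(\nabla u)_{i+1}-(\nabla u)_i|\leq \de^{-\frac{N+2}{2}}\big(\fiint_{Q_{\de^{i}R_{n_0}}^{\mu_{n_0}}}|\nabla u-(\nabla u)_i|^2\,dz\big)^{1/2}\leq \de^{-\frac{N+2}{2}}\ka^{i/2}\mu_{n_0}$. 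As $\ka\in(0,1)$ this is summable, so $\{(\nabla u)_i\}$ is Cauchy, and summing its tail from index $i$ yields the decay in \descref{conc2}{C2}. The cylinders $Q_{\de^{i}R_{n_0}}^{\mu_{n_0}}(z_0)$ shrink to $z_0$, so at every Lebesgue point of $\nabla u$ --- hence a.e., since $\nabla u\in L^{\infty}_{\loc}$ --- the limit of $(\nabla u)_i$ is $\nabla u(z_0)$, giving \descref{conc1}{C1}; once \descref{conc5}{C5} is available at such points, Campanato's characterization forces $\nabla u$ to coincide a.e.\ with a continuous function, and after passing to that representative the identification and all later estimates hold at every $z_0$.

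\emph{Sub-switching scales and the full decay (\descref{conc3}{C3}, \descref{conc4}{C4}, \descref{conc5}{C5}).} For $0<\rho<R_{n_0}$, choose $i\geq1$ with $\de^{i}R_{n_0}\leq\rho\leq\de^{i-1}R_{n_0}$, so $Q_{\de^{i}R_{n_0}}^{\mu_{n_0}}\subset Q_{\rho}^{\mu_{n_0}}\subset Q_{\de^{i-1}R_{n_0}}^{\mu_{n_0}}$; then $|(\nabla u)_{\rho}-(\nabla u)_{i-1}|\leq\de^{-(N+2)}\fiint_{Q_{\de^{i-1}R_{n_0}}^{\mu_{n_0}}}|\nabla u-(\nabla u)_{i-1}|\,dz$, which with \descref{conc2}{C2} at index $i-1$ gives \descref{conc3}{C3}. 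For \descref{conc4}{C4} I would split at $R_{n_0}$. When $0<\rho\leq R_{n_0}$, I start from \descref{conc3}{C3}, rewrite the geometric factor as a power of the radius ratio via $\de^{i}R_{n_0}\leq\rho$ and the defining relation for $\al_2$ (so that, up to a $\de$-dependent factor, $\ka^{i}\leq(\rho/R_{n_0})^{\al_2}$), and substitute $\mu_{n_0}=\eta^{n_0}\mu_0=\mu_0(R_{n_0}/S)^{\al_1}$ from \cref{2.12} and \cref{claim2.6}; writing $\rho/S=(\rho/R_{n_0})(R_{n_0}/S)$ and using $\al_3=\min\{\al_1,\al_2/2\}$ (so $\al_3\leq\al_1$ and $\al_3\leq\al_2/2$) together with the fact that both ratios lie in $(0,1)$, the bound collapses to $C\mu_0(\rho/S)^{\al_3}$. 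When $R_{n_0}\leq\rho<S$, the radius $\rho$ lies within a fixed factor of some $R_n$ with $n\leq n_0$, so \cref{2.12} and \cref{claim2.7} bound $\sup|\nabla u|$ on the corresponding intrinsic cylinder by $\mu_n\leq C\mu_0(\rho/S)^{\al_1}$, whence $|\nabla u(z_0)-(\nabla u)_{\rho}|\leq 2\sup|\nabla u|\leq C\mu_0(\rho/S)^{\al_3}$; this proves \descref{conc4}{C4}. Estimate \descref{conc5}{C5} follows the same way, starting from $\fiint_{Q_{\rho}^{\mu_{n_0}}}|\nabla u-(\nabla u)_{\rho}|^2\,dz\leq\de^{-(N+2)}\ka^{i-1}\mu_{n_0}^2$ for $\rho\leq R_{n_0}$ (using that the mean minimizes the $L^2$ deviation) and from $\fiint|\nabla u-(\nabla u)_{\rho}|^2\,dz\leq\fiint|\nabla u|^2\,dz\leq C\mu_0^2(\rho/S)^{2\al_1}$ for $R_{n_0}\leq\rho<S$, converting exponents exactly as above.

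\emph{Main obstacle.} The delicate part is the exponent bookkeeping in the last step: one must translate the geometric decay $\ka^{i}$ into the correct power of $\rho/R_{n_0}$ (this is precisely the role of $\al_2$ and the relation between $\ka$ and $\de$), translate the sup decay into the correct power of $R_{n_0}/S$ (through $\al_1$ and \cref{claim2.6}), and then check that the two regimes --- below and above the switching radius --- glue into one estimate governed by the single exponent $\al_3=\min\{\al_1,\al_2/2\}$, with no loss as $\rho$ crosses $R_{n_0}$. A secondary, more technical point is upgrading the a.e.\ identification of the Cauchy limit in \descref{conc1}{C1} with $\nabla u(z_0)$ to every point $z_0\in Q_0$, which is handled by first proving the Campanato estimate \descref{conc5}{C5} at Lebesgue points and then passing to the continuous representative of $\nabla u$.
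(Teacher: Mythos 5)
Your proposal is correct and follows essentially the same route as the paper: telescoping the averages via the measure-ratio factor $\de^{-(N+2)}$ and \cref{8.10} to get the Cauchy property and the decay of $|\nabla u(z_0)-(\nabla u)_i|$, comparing $(\nabla u)_\rho$ to the nearest dyadic average for \descref{conc3}{C3}, and splitting at the switching radius $R_{n_0}$ with the exponent conversions through $\al_1,\al_2,\al_3$ (via \cref{claim2.6,claim2.7}) for \descref{conc4}{C4} and \descref{conc5}{C5}. The only (harmless) deviations are cosmetic: you carry the unsquared bound $C\ka^{i/2}\mu_{n_0}$ where the paper's displayed \descref{conc2}{C2} has a typographical $\ka^i\mu_{n_0}^2$, and you add the optional Campanato/continuous-representative remark where the paper simply invokes Lebesgue points.
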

\begin{proof} From simple triangle inequality, we have 
\begin{equation}\label{triang_inq}
    |(\nabla u)_{i+1} - (\nabla u)_i|^2 \leq 2 |\nabla u - (\nabla u)_{i+1}|^2 + 2 |\nabla u - (\nabla u)_i|^2.
\end{equation}
    \begin{description}[leftmargin=*]
        \item[Proof of \descref{conc1}{C1}:] From Lebesgue differentiation theorem, we see that $(\nabla u)_i \overset{i \nearrow \infty}{\longrightarrow} \nabla u(z_0)$ provided $z_0$ is a Lebesgue point. So all that remains to show that is that the sequence $(\nabla u)_i$ is Cauchy which follows from the following sequence of estimates noting that $\ka \in (0,1)$: 
        \begin{equation}\label{8.16}
    \begin{array}{rcl}
    |(\nabla u)_{i+1} - (\nabla u)_i|^2 & \overset{\cref{triang_inq}}{\leq} & 2 \fiint_{Q_{\de^{i+1}R_{n_0}}^{\mu_{n_0}}}  |\nabla u - (\nabla u)_{i+1}|^2 \ dz + 2 \frac{|Q_{\de^iR_{n_0}}^{\mu_{n_0}}|}{|Q_{\de^{i+1}R_{n_0}}^{\mu_{n_0}}|} \fiint_{Q_{\de^iR_{n_0}}^{\mu_{n_0}}} |\nabla u - (\nabla u)_i|^2 \ dz \\
    & \leq &  2 \ka^{i+1} \mu_{n_0}^2 + \frac{2}{\de^{N+2}} \ka^i \mu_{n_0}^2  = 2 \ka^i \mu_{n_0}^2 \lbr \ka + \frac{1}{\de^{N+2}} \rbr.
    \end{array}
\end{equation}

        \item[Proof of \descref{conc2}{C2}:] By adding and subtracting, for any $j \geq 1$, we have
\begin{equation*}
    \begin{array}{rcl}
        |(\nabla u)_{i+j} - (\nabla u)_i|^2 & \overset{\cref{8.16}}{\leq} & 2 \ka^i \mu_{n_0}^2 \lbr \ka + \frac{1}{\de^{N+2}} \rbr ( 1 + \ka + \ka^2 + \ldots + \ka^j) \\
        & \leq & 2 \ka^i \mu_{n_0}^2 \lbr \ka + \frac{1}{\de^{N+2}} \rbr \frac{1}{1-\ka} 
         =  C(\ka,\de) \ka^i \mu_{n_0}^2.
    \end{array}
\end{equation*}
 In particular, letting $j \rightarrow \infty$, the following holds:
 \begin{equation*}
     |\nabla u(x_0,t_0) - (\nabla u)_i|^2 \leq C \ka^i \mu_{n_0}^2 \txt{for all} i = 1,2\ldots.
 \end{equation*}%
        \item[Proof of \descref{conc3}{C3}:]  Let $\rho < R_{n_0}$ be given, then there exists $i \in \ZZ$ such that 
    \begin{equation}\label{8.18}\de^i R_{n_0} \leq \rho \leq \de^{i-1} R_{n_0},\end{equation} holds, where $\de$ is the constant from \cref{alt1}. Thus, we have the following sequence of estimates:
    \begin{equation}\label{8.19}
     \begin{array}{rcl}
         |(\nabla u)_{\rho} - (\nabla u)_i|^2 & \leq & \fiint_{Q_{\rho}^{\mu_{n_0}}} |\nabla u - (\nabla u)_i|^2 \ dz 
           \leq  \frac{|Q_{\de^{i-1}R_{n_0}}^{\mu_{n_0}}|}{|Q_{\rho}^{\mu_{n_0}}|} \fiint_{Q_{\de^{i-1}R_{n_0}}^{\mu_{n_0}}} |\nabla u - (\nabla u)_i|^2 \ dz \\
          & \overset{\redlabel{8.10a}{a}}{\leq} & \frac{[\de^{i-1} R_{n_0}]^{N+2}}{\rho^{N+2}} C \ka^{i-1} \mu_{n_0}^2 
          \overset{\cref{8.18}}{\leq}  \frac{C}{\de^{N+2}} \ka^{i-1} \mu_{n_0}^2,
     \end{array}
 \end{equation}
 where to obtain \redref{8.10a}{a}, we made use of \cref{8.10} and \cref{8.10_trian}.
Thus from triangle inequality, we get the desired conclusion:
 \begin{equation}\label{2.10}
         |\nabla u(x_0,t_0) - (\nabla u)_{\rho}|^2 \leq  2 |\nabla u(x_0,t_0) - (\nabla u)_{i}|^2 + 2|(\nabla u)_i - (\nabla u)_{\rho}|^2 
          \leq  C_{(\de,\ka)} \ka^{i} \mu_{n_0}^2.
 \end{equation}

        \item[Proof of \descref{conc4}{C4}:]  We split the proof into two cases, either $\rho \leq R_{n_0}$ or $R_{n_0} \leq \rho \leq S$. 
        \begin{description}
            \item[Case $\rho \leq R_{n_0}$:] In this case, there exists $i \in \ZZ$ such that \cref{8.18} holds. Following the calculation from \cref{claim2.6}, we see that $\ka^i \leq \lbr \frac{\rho}{R_{n_0}} \rbr^{\al_2}$. Using this, we get the following sequence of estimates:
            \begin{equation}\label{8.21}
             \begin{array}{rcl}
                 |\nabla u(x_0,t_0) - (\nabla u)_{\rho}| & \overset{\cref{2.10}}{\leq} & C \ka^{\frac{i}{2}} \mu_{n_0} \leq C \lbr \frac{\rho}{R_{n_0}} \rbr^{\frac{\al_2}{2}} \mu_{n_0} 
                 = C \lbr \frac{\rho}{R_{n_0}} \rbr^{\frac{\al_2}{2}} \eta^{n_0}\mu_0 \\
                 & \overset{\text{\cref{claim2.6}}}{=} & C \lbr \frac{\rho}{R_{n_0}} \rbr^{\frac{\al_2}{2}} \lbr \frac{R_{n_0}}{S} \rbr^{\al_1} \mu_0 \\
                 & \leq & \lbr \frac{\rho}{R_{n_0}} \rbr^{\al_3} \lbr \frac{R_{n_0}}{S} \rbr^{\al_3} \mu_0 \\
                 & = & C \lbr \frac{\rho}{S} \rbr^{\al_3} \mu_0 .
             \end{array}
         \end{equation}
         In the above estimate, we noted that $\rho \leq R_{n_0} \leq R$ and $\al_3 = \min\{ \al_1,\al_2/2\}$.
            \item[Case $R_{n_0} \leq \rho \leq S$:] From the definition of $R_{n_0}$ (see \cref{switch_rad}), we see that there exists some $i \in \{1,2,\ldots, n_0\}$ such that $c_0^i S \leq \rho \leq c_0^{i-1} S$ (recall $c_0 \in (0,1)$ from \cref{cyl_incl}). Thus we get the following sequence of estimates:
            \begin{equation}\label{8.22}
            \begin{array}{rcl}
                |\nabla u(x_0,t_0) - (\nabla u)_{\rho}| & \overset{\text{\cref{alt2}}}{\leq} & 2 \sup_{Q_{R_{i-1}}^{\mu_{i-1}}} |\nabla u| 
                  \overset{\cref{2.12}}{\leq}  2 \eta^{i-1} \mu_0 \\
                 & \overset{\text{\cref{claim2.6}}}{=} & 2 \lbr \frac{R_{i-1}}{S} \rbr^{{\al_1}} \mu_0 \overset{\text{\cref{iter_const_def}}}{=} 2 \lbr \frac{c_0^{i-1} S}{S} \rbr^{{\al_1}} \mu_0\\
                 & \leq  & \frac{2}{c_0^{\al_1}} \lbr \frac{\rho}{S} \rbr^{{\al_1}}  \mu_0 \\
                 & \leq & C \lbr \frac{\rho}{S} \rbr^{\al_3} \mu_0 .
            \end{array}
        \end{equation}
        \end{description}
        \item[Proof of \descref{conc5}{C5}:] If $\rho \geq R_{n_0}$, then the conclusion follows directly from \cref{8.22}. In the case $\rho \leq R_{n_0}$, there exists $i \in \ZZ$ such that \cref{8.18} holds. Using this, we get
        \begin{equation*}%
        \begin{array}{rcl}
            \fiint_{Q_{\rho}^{\mu_{n_0}}} |\nabla u - (\nabla u)_{\rho}|^2 \ dz & \leq & 2 \fiint_{Q_{\rho}^{\mu_{n_0}}} |\nabla u - (\nabla u)_{i}|^2 \ dz + 2  |(\nabla u)_i - (\nabla u)_{\rho}|^2 \\
            & \apprle & \frac{| Q_{\de^{i-1}R_{n_0}}^{\mu_{n_0}}|}{| Q_{\rho}^{\mu_{n_0}}|}\fiint_{ Q_{\de^{i-1}R_{n_0}}^{\mu_{n_0}}} |\nabla u - (\nabla u)_{i-1}|^2 \ dz +  |(\nabla u)_{i-1} - (\nabla u)_{\rho}|^2   + 2  |(\nabla u)_i - (\nabla u)_{\rho}|^2 \\ 
            & \overset{\redlabel{8.23a}{a}}{\leq} & \frac{C}{\de^{N+2}} \ka^{i-1} \mu_{n_0}^2 \\
            & \overset{\cref{8.21}}{\leq} & C  \lbr \frac{\rho}{S} \rbr^{2\al_3} \mu_0 ^2,
        \end{array}
    \end{equation*}%
    where to obtain \redref{8.23a}{a}, we made use of \cref{8.10} along with \cref{8.19}.
    \end{description}
    This completes the proof of the lemma. 
    \end{proof}
\subsubsection{Proof of gradient H\"older continuity in time in the cylinder  \texorpdfstring{$Q_{R}^{\mu}$}.}
Let us fix any  point $z_0= (x_0,t_0) \in Q_0$ and let  $\tz_1=(x_0,t_1) \in Q_0$ be given. In this case, we assume that the two points $z_0, \tz_1  \in Q_S^{\mu_0}$ belong to the same cylinder for some cylinder $Q_S^{\mu_0}$. We will prove that the gradient is H\"older continuous at $z_0$ and since the point $z_0 \in Q_0$ is arbitrary, this proves H\'older regularity in cylinders of the form $Q_S^{\mu_0}$.

\begin{remark}
From \cref{switch_rad}, let us denote $\mu_{n_0}$ (respectively $\mu_{n_1}$) to be the switching number and $R_{n_0}$ (respectively $R_{n_1}$) to be the switching radius corresponding to the point $z_0$ (respectively $\tz_1$).  Note that even though these two switching numbers depend on the point, all the estimates and constants in \cref{lemma8.8} are independent of the point. 
\end{remark}

Let $\rho:=d(z_0,\tz_1)$, then from triangle inequality, we have 
      \begin{equation}\label{8.26}
        |\nabla u(x_0,t_0) - \nabla u(x_0,t_1)| \leq |\nabla u(x_0,t_0) - (\nabla u)_{\C_0}|+|(\nabla u)_{\C_1} - \nabla u(x_0,t_1)|+|(\nabla u)_{\C_0} - (\nabla u)_{\C_1}|,
\end{equation}
where we have used the notation $(\nabla u)_{\C_i} := \fiint_{\C_i} \nabla u \ dz$ and $\C_0,\C_1$ are cylinders that will eventually be chosen. In what follows, we shall use $i\in \{0,1\}$ to denote quantities which are related to $z_0$ and $z_1$ respectively. 

 Without loss of generality, let us assume
    \begin{equation}\label{ass_1}
        \mu_{n_0} \leq \mu_{n_1}.
    \end{equation}
    
\begin{description}[leftmargin=*]
    \item[\underline{Case $\min\{\mu_{n_0}^{-p/2}R_{n_0},\mu_{n_1}^{-p/2}R_{n_1}\} \geq 2\rho$:}]  Let $\rho = \sqrt{|t_0-t_1|}$ and  set $S_i:= \mu_{n_i}^{p/2} \sqrt{|t_0-t_1|} = \mu_{n_i}^{p/2} \rho$.  Now let us consider the cylinder $\C_0=Q_{S_0}^{\mu_{n_0}}(z_0)$ and $\C_1 = Q_{S_1}^{\mu_{n_1}}(\tz_1)$, then we have the following observations:
        \begin{center}
\begin{tikzpicture}[line cap=round,line join=round,>=latex,x=0.4cm,y=0.4cm]
\draw [line width=1pt,color=orange,opacity=0.5] (-5.5,6) rectangle (3.5,-4);
\draw [line width=1pt,color=blue,opacity=0.3] (-7.5,1) rectangle (5.5,-9);
\draw [line width=0.1pt,opacity=0.3,dashed,<->] (-1,8) -- (-1,-11);
\draw [line width=0.1pt,opacity=0.3,dashed,<->] (-10,-1.5) -- (8,-1.5);
\draw[color=orange] (4.2,2.5) node {$\C_0$};
\draw[color=blue] (6.2,-6.5) node {$\C_1$};
\draw[draw=teal, dashed, <->] (-6,6) -- (-6,1);
\draw[draw=teal, dashed, <->] (-6,1) -- (-6,-4);
\draw[draw=teal, dashed, <->] (-6,-4) -- (-6,-9);
\draw[draw=teal, dashed, <->] (-7.5,-9.5) -- (-1,-9.5);
\draw[draw=teal, dashed, <->] (-1,-9.5) -- (5.5,-9.5);
\draw[draw=teal, dashed, <->] (-5.5,6.5) -- (-1,6.5);
\draw[draw=teal, dashed, <->] (-1,6.5) -- (3.5,6.5);
\draw[color=black] (-1,-1.5) node {$\C_0\cap \C_1$};
\begin{scriptsize}
\draw [fill=black] (-1,1) circle (2pt);
\draw[color=black] (1,1.4) node {$(x_0,t_0)=z_0$};
\draw [fill=black] (-1,-4) circle (2pt);
\draw[color=black] (0.7,-3.6) node {$(x_0,t_1)=\tz_1$};
\draw[color=teal] (-3,7) node {$S_0 \mu_{n_0}^{-1} $};
\draw[color=teal] (1.5,7) node {$S_0 \mu_{n_0}^{-1} $};
\draw[color=teal] (-4,-10.2) node {$S_1 \mu_{n_1}^{-1} $};
\draw[color=teal] (2.5,-10.2) node {$S_1 \mu_{n_1}^{-1} $};
\draw[color=teal] (-7.5,3.5) node {$\mu_{n_0}^{-p} S_0^2$};
\draw[color=teal] (-12.5,-1) node {$|t_0-t_1|=\mu_{n_1}^{-p} S_1^2 = \mu_{n_0}^{-p} S_0^2$};
\draw[color=teal] (-9.5,-6.5) node {$\mu_{n_1}^{-p} S_1^2$};
\end{scriptsize}
\draw [pattern=north west lines, pattern color=orange, opacity=0.3,draw=none] (-5.5,6) rectangle (3.5,-4);
\draw [pattern=north east lines, pattern color=blue, opacity=0.2,draw=none] (-7.5,1) rectangle (5.5,-9);
\end{tikzpicture}
\end{center}
        
        \begin{itemize}
            \item $\C_0,\C_1 \in Q_S^{\mu_0}$. In order to see this, we need to show $S_i\mu_{n_i}^{-1} \leq \mu_0^{-1} S$ and $\mu_{n_i}^{-p} S_i^2 \leq \mu^{-p} S^2$. The space inclusion holds due to the following calculations:
            \[
                S_i\mu_{n_i}^{-1} \leq \mu_0^{-1} S \Longleftrightarrow \rho \leq \lbr \frac{\eta}{c_0}\rbr^{n_i} \mu_{n_i}^{-\frac{p}{2}}R_{n_i} \Longleftarrow \rho \leq \lbr \frac{2}{\sigma}\rbr^{n_i} \mu_{n_i}^{-\frac{p}{2}}R_{n_i},
            \]
and the last inequality holds true since we are in the case $2\rho \leq \mu_{n_i}^{-\frac{p}{2}}R_{n_i}$ where  $\sigma \in (0,1)$. The time inclusion is trivial since $\mu_{n_i}^{-p}S_i^2 = |t_0-t_1| = \rho^2 \leq \mu_0^{-p} S^2$ which holds true since $z_0,z_1 \in Q_S^{\mu_0}$.

            \item Clearly, we see that $\C_0 \cap \C_1 \neq \emptyset$ and we additionally have the following:
            \begin{equation*}
            |\C_0 \cap \C_1| \approx \min\{\mu_{n_0}^{\frac{p}{2}-1},\mu_{n_1}^{\frac{p}{2}-1}\}^N |t_0-t_1|^{\frac{N+2}{2}}, \quad  |\C_0| \approx  \mu_{n_0}^{N\lbr\frac{p}{2}-1\rbr} |t_0-t_1|^{\frac{N+2}{2}}, \quad |\C_1| \approx  \mu_{n_1}^{N\lbr\frac{p}{2}-1\rbr} |t_0-t_1|^{\frac{N+2}{2}}.
            \end{equation*}
        \end{itemize}
             Thus, making use of \cref{ass_1}, we additionally observe the following:
            \begin{equation}\label{area_estimate_time}
            \begin{array}{rcl}
                    \frac{|\C_0|}{|\C_0 \cap \C_1|} & = &   \frac{\mu_{n_0}^{\frac{N(p-1)}{2}} \mu_{n_0}^{-\frac{N}{2}}}{\min \left\{\mu_{n_0}^{\frac{N(p-1)}{2}} \mu_{n_0}^{-\frac{N}{2}},\mu_{n_1}^{\frac{N(p-1)}{2}} \mu_{n_1}^{-\frac{N}{2}} \right\}} \overset{\cref{ass_1}}{\leq}  = \lbr \frac{\mu_{n_1}}{\mu_{n_0}}\rbr^{\frac{N}{2}}, \\
                    \frac{|\C_1|}{|\C_0 \cap \C_1|}&  = &   \frac{\mu_{n_1}^{\frac{N(p-1)}{2}} \mu_{n_1}^{-\frac{N}{2}}}{\min \left\{\mu_{n_0}^{\frac{N(p-1)}{2}} \mu_{n_0}^{-\frac{N}{2}},\mu_{n_1}^{\frac{N(p-1)}{2}} \mu_{n_1}^{-\frac{N}{2}} \right\}} \overset{\cref{ass_1}}{\leq}   \lbr \frac{\mu_{n_1}}{\mu_{n_0}}\rbr^{\frac{N(p-1)}{2}}.
                    \end{array}
            \end{equation}

        Thus the first two terms on the right hand side of \cref{8.26} is estimated as follows (recall notation from  \cref{mua}):
        \begin{equation}\label{8.27_time}
\begin{array}{rcl}
    |\nabla u(x_0,t_i) - (\nabla u)_{\C_i}| & \overset{\text{\descref{conc4}{C4}}}{\leq} &  C \mu_0 \lbr \frac{S_i}{S}\rbr^{\al_3}  = \  C \mu_0 \lbr \frac{\mu_{n_i}^{p/2} \rho}{S}\rbr^{\al_3} \\
    & \overset{\text{\descref{obs2}{O2}}}{\leq} &  C \mu_0^a \lbr \frac{\sqrt{|t_0-t_1|}}{\min\{\mu_0,\mu_0^{p/2}\} R_0}\rbr^{\al_3}
     \leq  C \mu_0^a \lbr \frac{\sqrt{|t_0-t_1|}}{ R_0}\rbr^{\al_3}.
    \end{array}
\end{equation}
The third term on the right hand side of \cref{8.26} is estimated as follows:
\begin{equation}\label{8.28_time}
    \begin{array}{rcl}
        |(\nabla u)_{\C_0} - (\nabla u)_{\C_1}| & \leq & \fiint_{\C_0\cap \C_1} |\nabla u(z) - (\nabla u)_{\C_0}| \ dz + \fiint_{\C_0\cap \C_1} |\nabla u(z) - (\nabla u)_{\C_1}| \ dz\\
        & \leq & \frac{|\C_0|}{|\C_0 \cap \C_1|}  \fiint_{\C_0} |\nabla u(z) - (\nabla u)_{\C_0}| \ dz + \frac{|\C_1|}{|\C_0 \cap \C_1|}\fiint_{\C_1} |\nabla u(z) - (\nabla u)_{\C_1}| \ dz\\
        & \overset{\text{\descref{conc5}{C5}},\cref{area_estimate_time}}{\leq}& C \mu_0 \lbr \frac{\mu_{n_1}}{\mu_{n_0}}\rbr^{\frac{N}{2}}\lbr \frac{S_0}{S} \rbr^{\al_3} +C \mu \lbr \frac{\mu_{n_1}}{\mu_{n_0}}\rbr^{\frac{N(p-1)}{2}} \lbr \frac{S_1}{S} \rbr^{\al_3} \\
        & \overset{\cref{8.27_time}}{\leq} & C \mu_0^a \lbr \lbr \frac{\mu_{n_1}}{\mu_{n_0}}\rbr^{\frac{N(p-1)}{2}} + \lbr \frac{\mu_{n_1}}{\mu_{n_0}}\rbr^{\frac{N}{2}}\rbr \lbr \frac{\sqrt{|t_0-t_1|}}{ R_0}\rbr^{\al_3} \\
    \end{array}
\end{equation}

\item[Removing dependence on $\{\mu_{n_i}\}$:] The estimates in  \cref{8.28_time} still contain $\mu_{n_0}$ and $\mu_{n_1}$ which we control as follows. Let  $\ga \in (0,1)$  be a constant satisfying
\begin{equation*}
\ga \leq \frac{\al_3}{2N} \frac{1}{\max\{p-1,1\}}.
\end{equation*}%
We then  consider the following two cases, either $\mu_{n_0} \geq \mu_{n_1} \lbr \frac{\rho}{R_0}\rbr^{\ga}$ or $\mu_{n_0} \leq \mu_{n_1} \lbr \frac{\rho}{R_0}\rbr^{\ga}$. \textit{Let us first obtain the H\"older continuity of the gradient in the case }
\begin{equation}\label{case1}\mu_{n_0} \geq \mu_{n_1} \lbr \frac{\rho}{R_0}\rbr^{\ga}.\end{equation}
Thus making use of \cref{case1} along with \cref{8.27_time} and \eqref{8.28_time}, we get the following estimates:
\begin{equation*}\begin{array}{rcl}
    |\nabla u(x_0,t_0) - \nabla u(x_0,t_1)| &\leq& C \mu_0^a \lbr \frac{\sqrt{|t_0-t_1|}}{ R_0}\rbr^{\al_3 -\frac{\ga N}{2} \max\{p-1,1\}}, \\
\end{array}\end{equation*}

  \textit{Now we consider the case \cref{case1} fails, then we see that $z_0,\tz_1$ both belong to $\C_0$, since we are in the case $2\rho \leq \min\{\mu_{n_0}^{-p/2}R_{n_0},\mu_{n_1}^{-p/2}R_{n_1}\}$. Thus, we have}
  \[
    |\nabla u(x_0,t_0) - \nabla u(x_0,t_1)| \leq 2\mu_{n_0} \overset{\text{\cref{case1} fails}}{\leq} 2 \mu_{n_1} \lbr \frac{\rho}{R_0}\rbr^{\gamma} \leq 2 \mu_0 \lbr \frac{\sqrt{|t_0-t_1|}}{R_0}\rbr^{\gamma}
  \]
    \item[\underline{Case $\max\{\mu_{n_0}^{-p/2}R_{n_0},\mu_{n_1}^{-p/2}R_{n_1}\} \leq 2\rho$:}] In this case, we directly proceed as follows  (recall notation from  \cref{mua}):
    \begin{equation*}%
        \begin{array}{rcl}
            |\nabla u(z_0) - \nabla u(\tz_1)| & \leq &  \sup_{Q_{R_{n_0}}^{\mu_{n_0}}(z_0)} |\nabla u| + \sup_{Q_{R_{n_1}}^{\mu_{n_1}}(\tz_1)} |\nabla u|         \overset{\cref{2.12}}{\leq}   \eta^{n_0} \mu_0 + \eta^{n_1} \mu_0\\
            & \overset{\text{\cref{claim2.6}}}{=} &  \mu_0  \lbr \frac{R_{n_0}}{S} \rbr^{\al_1} + \mu_0  \lbr \frac{R_{n_1}}{S} \rbr^{\al_1}\leq  C \mu_0^a \lbr \frac{\rho}{S} \rbr^{\al_1} \\
            & \overset{\text{\descref{obs2}{O2}}}{\leq} & C \mu_0^{a}  \lbr \frac{\sqrt{|t_0-t_1|}}{R_0} \rbr^{\al_1}.
        \end{array}
    \end{equation*}%

    \item[\underline{Case $\min\{\mu_{n_0}^{-p/2}R_{n_0},\mu_{n_1}^{-p/2}R_{n_1}\} \leq 2\rho \leq \max\{\mu_{n_0}^{-p/2}R_{n_0},\mu_{n_1}^{-p/2}R_{n_1}\}$:}] Recalling \cref{cyl_incl} and \cref{ass_1}, we see that this case becomes  $\mu_{n_0}^{-p/2}R_{n_0} \leq 2\rho \leq \mu_{n_1}^{-p/2}R_{n_1}$. Let $n_0 \leq n_{\ast} \leq n_1$ be a number such that $\mu_{n_{\ast}+1}^{-p/2}R_{n_{\ast}+1} \leq 2\rho \leq \mu_{n_{\ast}}^{-p/2}R_{n_{\ast}}$ holds. Recalling \cref{cyl_incl}, we then  have 
    \[
        2\rho \leq \min\{\mu_{n_{\ast}}^{-p/2} R_{n_{\ast}}, \mu_{n_1}^{-p/2}R_{n_1}\}.
    \]
    Thus we can replace $\C_0$ in the first case with $\C_{\ast}:= Q_{R_{n_\ast}}^{\mu_{n_\ast}}(z_0)$ and make use of \cref{claim2.7} followed by going through the calculations of the first case to obtain the desired regularity.  More precisely, we argue as follows.    
    
    The first term on the right hand side of \cref{8.26} is estimated as follows:
    \begin{equation}\label{first_term}
        |\nabla u(x_0,t_0) - \avgs{\nabla u}{\C_{\ast}}| \leq 2 \mu_{n_{\ast}} \overset{\text{\cref{claim2.7}}}{\leq} C \mu_0 \lbr \frac{R_{n_{\ast}}}{S}\rbr^{\al_1} \leq C \mu_0^a \lbr \frac{\mu_{n_{\ast}+1}^{p/2}\rho}{S} \rbr^{\al_1} \leq  C \mu_0^a \lbr \frac{\rho}{S}\rbr^{\al_1}.\end{equation}
    The second term on the right hand side of \cref{8.26} is estimated exactly as \cref{8.27_time}. In order to estimate the last term on the right hand side of \cref{8.26}, we make the following observations:
     \begin{equation}\label{all_cases}
                    \frac{|\C_1|}{|\C_{\ast} \cap \C_1|}  \apprle   \frac{\mu_{n_1}^{\frac{N(p-2)}{2}} }{\min \left\{\mu_{n_1}^{\frac{N(p-2)}{2}} ,\mu_{n_{\ast}}^{\frac{N(p-2)}{2}}  \right\}}   \apprle \lbr \frac{\mu_{n_1}}{\mu_{n_{\ast}}}\rbr^{\frac{N(p-1)}{2}},
            \end{equation}
            where to obtain the last estimate, we used the fact that $n_{\ast} \leq n_1$ and proceeded as in \cref{area_estimate_time}.
Now we estimate the last term on the right hand side of \cref{8.26} as follows:
\begin{equation*}
    \begin{array}{rcl}
        |(\nabla u)_{\C_{\ast}} - (\nabla u)_{\C_1}| & \leq & \fiint_{\C_{\ast}\cap \C_1} |\nabla u(z) - (\nabla u)_{\C_\ast}| \ dz + \fiint_{\C_\ast\cap \C_1} |\nabla u(z) - (\nabla u)_{\C_1}| \ dz\\
        & \leq & 2\mu_{n_{\ast}} + \frac{|\C_1|}{|\C_\ast \cap \C_1|}\fiint_{\C_1} |\nabla u(z) - (\nabla u)_{\C_1}| \ dz\\
        & \overset{\redlabel{8.33a}{a}}{\leq}& C \mu_0 \lbr \frac{\rho}{S}\rbr^{\al_1} +C \lbr \frac{\mu_{n_1}}{\mu_{n_\ast}}\rbr^{{\frac{N(p-1)}{2}}} \lbr \frac{S_1}{S} \rbr^{\al_3} \\
        & \overset{\cref{8.27_time}}{\leq} & C \mu_0^a \lbr 1 + \lbr \frac{\mu_{n_1}}{\mu_{n_\ast}}\rbr^{\frac{N(p-1)}{2}}\rbr \lbr \frac{\sqrt{|t_0-t_1|}}{ R_0}\rbr^{\al_3},
    \end{array}
\end{equation*}
where to obtain \redref{8.33a}{a}, we made use of \cref{first_term} along with \cref{all_cases} and \descref{conc5}{C5}. From here onwards, we can proceed as the first case  to remove the dependence of the estimate on $\lbr \frac{\mu_{n_1}}{\mu_{n_\ast}}\rbr^{\frac{N(p-1)}{2}}$.
\end{description}
\subsubsection{Proof of gradient H\"older continuity in space in the cylinder  \texorpdfstring{$Q_{R}^{\mu}$}.}
Let us fix any  point $z_0= (x_0,t_0) \in Q_0$ and let  $\tz_1=(x_1,t_0) \in Q_0$ be given. In this case, we assume that the two points $z_0, \tz_1  \in Q_S^{\mu_0}$ belong to the same cylinder for some cylinder $Q_S^{\mu_0}$. We will prove that the gradient is H\"older continuous at $z_0$ and since the point $z_0 \in Q_0$ is arbitrary, this proves H\'older regularity in cylinders of the form $Q_S^{\mu_0}$.

Let $\rho:=d(z_0,\tz_1)$, we have the same estimate as \cref{8.26} and  without loss of generality, let us assume \cref{ass_1} holds. Since the proof is very similar to the time case, we only give a rough sketch. 
    \begin{description}[leftmargin=*]
            \item[\underline{Case $\min\{\mu_{n_0}^{-1}R_{n_0},\mu_{n_1}^{-1}R_{n_1}\} \geq 2\rho$:}] Let us set $\rho = |x_0-x_1|$ and $S_i := \mu_{n_i} |x_0-x_1| = \mu_{n_i} \rho$ and construct the cylinders $\C_0=Q_{S_0}^{\mu_{n_0}}(z_0)$ and $\C_1 = Q_{S_1}^{\mu_{n_1}}(\tz_1)$, then we have the following observations:
            
            \begin{center}
\begin{tikzpicture}[line cap=round,line join=round,>=latex,x=0.4cm,y=0.4cm]
\draw [line width=1pt,color=orange,opacity=0.5] (-9,3) rectangle (1.5,-6);
\draw [line width=1pt,color=blue,opacity=0.3] (-4,5) rectangle (6.5,-8);
\draw [line width=0.1pt,opacity=0.3,dashed,<->] (-1,7) -- (-1,-10);
\draw [line width=0.1pt,opacity=0.3,dashed,<->] (-11,-1.5) -- (8.5,-1.5);
\draw[color=orange] (-6.2,-6.5) node {$\C_0$};
\draw[color=blue] (4.2,-8.5) node {$\C_1$};
\draw[draw=teal, dashed, <->] (-9,3.5) -- (-4,3.5);
\draw[draw=teal, dashed, <->] (-4,3.5) --(1.5,3.5);
\draw[draw=teal, dashed, <->] (1.5,3.5) -- (6.5,3.5);
\draw[draw=teal, dashed, <->] (-9.5,3) -- (-9.5,-1.5);
\draw[draw=teal, dashed, <->] (-9.5,-1.5) -- (-9.5,-6);
\draw[draw=teal, dashed, <->] (7,5) -- (7,-1.5);
\draw[draw=teal, dashed, <->] (7,-1.5) -- (7,-8);
\draw[color=black] (-1,-1.5) node {$\C_0\cap \C_1$};
\begin{scriptsize}
\draw [fill=black] (-4,-1.5) circle (2pt);
\draw[color=black] (-4,-2.3) node {$(x_0,t_0)=z_0$};
\draw [fill=black] (1.5,-1.5) circle (2pt);
\draw[color=black] (1.5,-0.7) node {$(x_1,t_0)=\tz_1$};
\draw[color=teal] (-6.5,4) node {$S_0 \mu_{n_0}^{-1} $};
\draw[color=teal] (-1,6) node {$S_0 \mu_{n_0}^{-1}= S_1\mu_{n_1}^{-1} = |x_0-x_1| $};
\draw[color=teal] (4,4) node {$S_1 \mu_{n_1}^{-1} $};
\draw[color=teal] (-10.5,1) node {$\mu_{n_0}^{-p} S_0^2$};
\draw[color=teal] (-10.5,-3.5) node {$\mu_{n_0}^{-p} S_0^2$};
\draw[color=teal] (8.5,2) node {$\mu_{n_1}^{-p} S_1^2$};
\draw[color=teal] (8.5,-4.5) node {$\mu_{n_1}^{-p} S_1^2$};
\end{scriptsize}
\draw [pattern=north west lines, pattern color=orange, opacity=0.3,draw=none] (-9,3) rectangle (1.5,-6);
\draw [pattern=north east lines, pattern color=blue, opacity=0.2,draw=none] (-4,5) rectangle (6.5,-8);
\end{tikzpicture}
\end{center}
\begin{itemize}
\item In order for $\C_i \in Q_R^{\mu}$, we need $S_i\mu_{n_i}^{-1} \leq \mu_0^{-1} S$ and $\mu_{n_i}^{-p} S_i^2 \leq \mu_0^{-p} S^2$. The time inclusion holds due to the following calculations:
            \[
                S_i^2\mu_{n_i}^{-p} \leq \mu_0^{-p} S^2 \Longleftrightarrow \rho \leq \lbr \frac{\eta^{p/2}}{c_0}\rbr^{n_i} \mu_{n_i}^{-1}R_{n_i} \Longleftarrow \rho \leq \lbr \frac{2}{\sigma}\rbr^{n_i} \mu_{n_i}^{-1}R_{n_i},
            \]
and the last inequality holds true since we are in the case $2\rho \leq \mu_{n_i}^{-1}R_{n_i}$ where  $\sigma \in (0,1)$. The space inclusion is trivial since $\mu_{n_i}^{-1}S_i = |x_0-x_1| = \rho \leq \mu_0^{-1} S^2$ which holds true since $z_0,\tz_1 \in Q_S^{\mu_0}$.

\item The analogue of \cref{area_estimate_time} becomes 
\begin{equation*}
            \begin{array}{rcl}
                    \frac{|\C_0|}{|\C_0 \cap \C_1|} & = &   \frac{\mu_{n_0}^{1-p} \mu_{n_0}}{\min \left\{\mu_{n_0}^{1-p} \mu_{n_0},\mu_{n_1}^{{1-p}} \mu_{n_1} \right\}} \overset{\cref{ass_1}}{\leq}  = \lbr \frac{\mu_{n_1}}{\mu_{n_0}}\rbr^{p-1}, \\
                    \frac{|\C_1|}{|\C_0 \cap \C_1|}&  = &  \frac{\mu_{n_1}^{1-p} \mu_{n_1}}{\min \left\{\mu_{n_0}^{1-p} \mu_{n_0},\mu_{n_1}^{{1-p}} \mu_{n_1} \right\}} \overset{\cref{ass_1}}{\leq}   \lbr \frac{\mu_{n_1}}{\mu_{n_0}}\rbr.
                    \end{array}
            \end{equation*}
            \item Rest of the calculations goes through verbatim as in the time case.
\end{itemize}

                \item[\underline{Case $\max\{\mu_{n_0}^{-1}R_{n_0},\mu_{n_1}^{-1}R_{n_1}\} \leq 2\rho$:}] This case follows exactly as the time case. 
                   \item[\underline{Case $\min\{\mu_{n_0}^{-1}R_{n_0},\mu_{n_1}^{-1}R_{n_1}\} \leq 2\rho \leq \max\{\mu_{n_0}^{-1}R_{n_0},\mu_{n_1}^{-1}R_{n_1}\}$:}] Recalling \cref{cyl_incl} and \cref{ass_1}, this case becomes  $\mu_{n_0}^{-1}R_{n_0} \leq 2\rho \leq \mu_{n_1}^{-1}R_{n_1}$.  We can now replace $\C_0$ with $\C_{\ast} = Q_{R_{n_{\ast}}}^{\mu_{n_{\ast}}}(z_0)$ where $n_0 \leq n_{\ast} \leq n_1$ is a number satisfying $\mu_{n_{\ast}+1}^{-1}R_{n_{\ast}+1} \leq 2\rho \leq \mu_{n_{\ast}}^{-1}R_{n_{\ast}}$.  Rest of the calculations go through  exactly as in the time case.
    \end{description}

\subsubsection{Proof of gradient H\"older continuity in \texorpdfstring{$Q_0$}.}

In the previous subsection, we proved gradient H\"older continuity at any two points  provided both of them belonged to $z_0,\tz_1 \in Q_S^{\mu_0}$. In this subsection, we consider the case where both of the points does not belong to a single $Q_S^{\mu_0}$, i.e., if we consider the cylinder $Q_S^{\mu_0}(z_0)$, we are in the case $\tz_1 \notin Q_S^{\mu_0}(z_0)$. Thus we observe
\begin{equation}
\label{8.35}
    \max\{ \mu_0, \mu_0^{p/2}\} d(z_0,z_1)  \geq S \overset{\text{\descref{obs2}{O2}}}{\geq}\min\{ \mu_0, \mu_0^{p/2}\} 3R_0,
\end{equation}
holds, using which we get
\begin{equation*}%
        \frac{|\nabla u(z_0) - \nabla u(z_1)|}{d(z_0,z_1)}  \overset{\cref{8.35}}{\leq} \frac{2\mu_0}{R_0} \frac{    \max\{ \mu_0, \mu_0^{p/2}\}}{    \min\{ \mu_0, \mu_0^{p/2}\}}.
\end{equation*}%
This completes the proof of gradient H\"older continuity.

\subsection{Proof of first alternative - \texorpdfstring{\cref{alt1}}.}
Since $u$ is a weak solution of \cref{main_holder} on $Q_r^{\mu}$ for some $r \in (0,R]$, let us perform the following rescaling: Define
\begin{equation}\label{w_rescale}
        w(x,t) = \frac{u(\mu^{-1}rx,\mu^{-p}r^2t)}{\mu^{-1}r},
    \end{equation}
    then $w$ solves
    \begin{equation}\label{def_w}
        \mu^{p-1} w_t - \mu \dv \aa(\nabla w) = 0 \txt{on} Q_1 := B_1 \times I_1.
    \end{equation}
Let us first prove an energy estimate satisfied by \cref{def_w}.
\begin{lemma}
    \label{energy_w} Let $k \in \RR$ and  $\phi \in C^{\infty}(Q_1)$ be any cut-off function with $\phi = 0$ on $\pa_p(Q_1)$, then the following estimate holds:
\begin{equation*}
            \begin{array}{l}
                \sup_{t \in I_1}\mu^{p-1} \int_{B_1} (w_{x_i} - k)_-^2 \phi^2  \ dx+ {C_0}\mu \iint_{Q_1} (s^2+ |\nabla w|^2)^{\frac{p-2}{2}} |\nabla (w_{x_i} - k)_-|^2 \phi^2 \ dz \\
                \hspace*{3cm} \leq     2\mu^{p-1} \iint_{Q_1} (w_{x_i} - k)_-^2 \phi \phi_t \ dz + \frac{4C_1^2}{C_0}\mu  \iint_{Q_1} (s^2+ |\nabla w|^2)^{\frac{p}{2}} |\nabla  \phi|^2 \lsb{\chi}{\{w_{x_i} \leq k\}}  \ dz    \\
                \hspace*{5cm}  + 2C_1\mu \iint_{Q_1} (s^2+ |\nabla w|^2)^{\frac{p-1}{2}} (w_{x_i} -k)_- (|\phi||\nabla^2\phi| + |\nabla \phi|^2)\ dz.
            \end{array}
        \end{equation*}
\end{lemma}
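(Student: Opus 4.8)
\medskip

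\noindent\emph{Proof proposal.} The plan is to run the classical Caccioppoli (energy) estimate for the spatial derivative $w_{x_i}$ of a quasilinear parabolic equation, while bookkeeping the $\mu$--factors. By the regularization scheme invoked in \cref{regularize} we may treat $w$ as smooth enough to differentiate (rigorously one works with Steklov averages/difference quotients, which we suppress as agreed). Differentiating \cref{def_w} in the $x_i$ direction and writing $v:=w_{x_i}$, one obtains that $v$ is a weak solution of the linear equation $\mu^{p-1}v_t-\mu\,\dv\!\big(\aa'(\nabla w)\nabla v\big)=0$ on $Q_1$, i.e. $\mu^{p-1}\iint_{Q_1}v_t\,\psi\,dz+\mu\iint_{Q_1}\aa'(\nabla w)\nabla v\cdot\nabla\psi\,dz=0$ for every $\psi$ vanishing on $\pa_pQ_1$. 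I would test this with $\psi=-(v-k)_-\phi^2=-(k-v)_+\phi^2$.

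For the parabolic term, $v_t(k-v)_+=-\tfrac12\partial_t[(v-k)_-^2]$ a.e., so integrating in time over $(\inf I_1,t^\ast)$ and using $\phi=0$ at the bottom of $Q_1$ produces $\tfrac{\mu^{p-1}}{2}\int_{B_1}(v-k)_-^2\phi^2\,dx\big|_{t^\ast}$ together with a term $\mu^{p-1}\iint_{Q_1}(v-k)_-^2\phi\phi_t\,dz$; the standard splitting (keep the gradient term, then take $\sup_{t^\ast\in I_1}$) gives the first boundary term on the left and the first term on the right. For the diffusion term, write $\nabla\psi=\chi_{\{v<k\}}\nabla v\,\phi^2-2(v-k)_-\phi\nabla\phi$; the first summand contributes $\mu\iint_{Q_1}\chi_{\{v<k\}}\aa'(\nabla w)\nabla v\cdot\nabla v\,\phi^2\,dz\ge C_0\mu\iint_{Q_1}(s^2+|\nabla w|^2)^{\frac{p-2}{2}}|\nabla(v-k)_-|^2\phi^2\,dz$ by the lower bound in \cref{structure_aa_holder}, i.e. the coercive term on the left. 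This term must be left untouched — integrating it by parts would create third order derivatives of $w$.

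The remaining piece $2\mu\iint_{Q_1}(v-k)_-\phi\,\aa'(\nabla w)\nabla v\cdot\nabla\phi\,dz$ is the heart of the matter. Here I would use the chain rule $\aa'(\nabla w)\nabla w_{x_i}=\partial_{x_i}\big[\aa(\nabla w)\big]$ to write the integrand as $(v-k)_-\phi\,\partial_{x_i}[\aa(\nabla w)]\cdot\nabla\phi$ and integrate by parts in the $x_i$ variable, moving the $x_i$--derivative onto $(v-k)_-\phi\nabla\phi$. This yields three terms: one with $\partial_{x_i}(v-k)_-$ (a second derivative of $w$) against $\aa(\nabla w)$, one with $(v-k)_-\phi_{x_i}\nabla\phi$, and one with $(v-k)_-\phi\,\partial_{x_i}\nabla\phi$. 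In the last two, the growth bound $|\aa(\nabla w)|\le C_1(s^2+|\nabla w|^2)^{\frac{p-1}{2}}$ from \cref{structure_aa_holder} gives directly the third term on the right, $2C_1\mu\iint(s^2+|\nabla w|^2)^{\frac{p-1}{2}}(v-k)_-(|\phi||\nabla^2\phi|+|\nabla\phi|^2)\,dz$. For the first term, since $|\partial_{x_i}(v-k)_-|\le|\nabla(v-k)_-|$ and it is supported in $\{v\le k\}$, I would split $(s^2+|\nabla w|^2)^{\frac{p-1}{2}}=(s^2+|\nabla w|^2)^{\frac{p-2}{4}}(s^2+|\nabla w|^2)^{\frac p4}$ and apply Young's inequality: one part is absorbed into the coercive term $C_0\mu\iint(s^2+|\nabla w|^2)^{\frac{p-2}{2}}|\nabla(v-k)_-|^2\phi^2\,dz$, and the remainder contributes the second term on the right, $\tfrac{4C_1^2}{C_0}\mu\iint(s^2+|\nabla w|^2)^{\frac p2}|\nabla\phi|^2\chi_{\{v\le k\}}\,dz$. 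Collecting everything and taking $\sup_{t\in I_1}$ on the boundary term finishes the proof. The main obstacle is precisely this cross term: one must exploit the divergence structure via the identity $\aa'(\nabla w)\nabla w_{x_i}=\partial_{x_i}\aa(\nabla w)$ and integrate by parts in $x_i$, so that the unavoidable second derivative of $w$ is paired with the lower–growth factor $\aa(\nabla w)$ (exponent $p-1$) and can be absorbed into the coercive term, while simultaneously generating the $\nabla^2\phi$ contribution; handling the equation as a generic linear equation in $v$ and applying Young's inequality naively would instead leave an uncontrolled term of the form $(s^2+|\nabla w|^2)^{\frac{p-2}{2}}(v-k)_-^2|\nabla\phi|^2$ that does not match the stated right-hand side.
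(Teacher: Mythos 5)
Your proposal is correct and follows essentially the same route as the paper: differentiate the rescaled equation in $x_i$, test with $(w_{x_i}-k)_-\phi^2$, keep the coercive term from the ellipticity bound, and handle the cross term by integrating by parts in $x_i$ via $\aa'(\nabla w)\nabla w_{x_i}=\partial_{x_i}\aa(\nabla w)$, then use the growth bound plus Young's inequality to absorb the second-derivative piece into the coercive term. The constant bookkeeping ($2\mu^{p-1}$, $\tfrac{4C_1^2}{C_0}\mu$, $2C_1\mu$) also matches the paper's estimates for terms I--III.
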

\begin{proof}
    Let us differentiate \cref{def_w} with respect to $x_i$ for some $i \in\{1,2,\ldots,N\}$ and then take $(w_{x_i}-k)_-\phi^2$ with $\phi \in C_c^{\infty}$  as a test function to get
    \begin{equation*}
            \mu^{p-1} \iint_{Q_1} (w_{x_i})_t (w_{x_i}-k)_-\phi^2\ dz + \mu \iint_{Q_1} \iprod{\partial_i \aa(\nabla w)}{\nabla \phi^2} (w_{x_i}-k)_-\ dz + \mu \iint_{Q_1} \iprod{\partial_i \aa(\nabla w)}{\nabla (w_{x_i}-k)_-} \phi^2 \ dz = 0.
    \end{equation*}
    Recalling the notation from \cref{structure_aa_holder}, we see that $\pa_i \aa(\nabla w) = \aa'(\nabla w) \nabla w_{x_i}$.  Let us estimate each of the terms as follows:
\begin{description}
    \item[Estimate for $I$:] This can be estimated as follows:
    \begin{equation}\label{est_I}
        \mu^{p-1} \iint_{Q_1} (w_{x_i})_t (w_{x_i}-k)_-\phi^2 = \frac{\mu^{p-1}}{2} \iint_{Q_1} \lbr(w_{x_i}-k)_-^2\phi^2\rbr_t \ dz - {\mu^{p-1}} \iint_{Q_1}  (w_{x_i}-k)_-^2 \phi \phi_t \ dz.
    \end{equation}
    \item[Estimate for $II$:] Integrating by parts, we get
    \begin{equation}\label{est_II}
        \begin{array}{rcl}
            \mu \iint_{Q_1} \iprod{\partial \aa(\nabla w)}{\nabla \phi} \phi (w_{x_i}-k)_- \ dz & = & - \mu \iint_{Q_1} \iprod{\aa(\nabla w)}{\pa_{x_i} \nabla \phi^2}  \phi (w_{x_i}-k)_- \ dz\\
            && -\mu \iint_{Q_1} \iprod{ \aa(\nabla w)}{\nabla \phi^2} \pa_{x_i} (w_{x_i}-k)_- \ dz \\
            & \overset{\redlabel{9.11a}{a}}{\leq} & C_1\mu  \iint_{Q_1} (s^2 + |\nabla w|^2)^{\frac{p-1}{2}} (|\nabla ^2\phi| + |\nabla \phi|^2) |(w_{x_i}-k)_-|\ dz \\
            && + C_1\mu\ve \iint_{Q_1} (s^2 + |\nabla w|^2)^{\frac{p-2}{2}} |\phi|^2 |\nabla  (w_{x_i}-k)_-|^2 \ dz\\
            &&+ C_1\frac{\mu}{\ve} \iint_{Q_1} (s^2 + |\nabla w|^2)^{\frac{p}{2}}|\nabla \phi|^2 \ dz,
        \end{array}
    \end{equation}
    where to obtain \redref{9.11a}{a}, we made use of \cref{structure_aa_holder}, the trivial bound $|\phi| \leq 1$ and   Young's inequality.
    \item[Estimate for $III$:] We estimate this term as follows:
    \begin{equation}\label{est_III}
            \mu \iint_{Q_1} \iprod{\partial_i \aa(\nabla w)}{\nabla (w_{x_i}-k)_-} \phi^2 \ dz  \overset{\cref{structure_aa_holder}}{\geq} C_0\mu \iint_{Q_1} (s^2+|\nabla w|^2)^{\frac{p-2}{2}} |\nabla (w_{x_i}-k)_-|^2 \phi^2 \ dz.
    \end{equation}
\end{description}
Combining \cref{est_I}, \cref{est_II} and \cref{est_III} gives the desired estimate provided $\epsilon$ is chosen small enough. 

\end{proof}

\subsubsection{DeGiorgi type iteration for \texorpdfstring{$u_{x_i}$}.}
The main proposition we prove is the following:
\begin{proposition}\label{prop3.1}
    Let $r \in (0,R]$ and assume that 
    \begin{equation}\label{3.3}
        s + \sup_{Q_r^{\mu}} \|\nabla u\| \leq A \mu
    \end{equation}
    holds for some $A \geq 1$. For any $i \in \{1,2,\ldots,N\}$,  there exists universal constant $\nu \in (0,1/2)$ such that if
    \begin{equation*}%
        | \{ (x,t) \in Q_r^{\mu} : u_{x_i} < \mu/2\}| \leq \nu |Q_r^{\mu}|,
    \end{equation*}%
    holds, then we have the following conclusion:
    \begin{equation*}%
        u_{x_i} \geq \frac{\mu}{4} \txt{on} Q_{r/2}^{\mu}.
    \end{equation*}%
\end{proposition}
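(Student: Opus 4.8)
The plan is to run a De Giorgi iteration on the rescaled function $w$ defined by \cref{w_rescale}, working with the truncations $(w_{x_i}-k_j)_-$ along a decreasing sequence of levels $k_j \searrow \mu/4$ and a shrinking family of cylinders $Q_j$ interpolating between $Q_1$ and $Q_{1/2}$ (in the rescaled picture). First I would normalize: since $w_{x_i}(x,t) = u_{x_i}(\mu^{-1}rx,\mu^{-p}r^2t)$, the hypothesis $|\{Q_r^\mu : u_{x_i}<\mu/2\}|\le\nu|Q_r^\mu|$ transfers to $|\{Q_1 : w_{x_i}<\mu/2\}|\le\nu|Q_1|$, and \cref{3.3} gives $s+\sup_{Q_1}|\nabla w|\le A\mu$. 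Set $k_j := \tfrac{\mu}{4}+\tfrac{\mu}{4}2^{-j}$ (so $k_0=\mu/2$, $k_\infty=\mu/4$), choose standard cutoffs $\phi_j$ equal to $1$ on $Q_{j+1}$, vanishing on $\partial_p Q_j$ with $|\nabla\phi_j|\apprle 2^j$, $|\nabla^2\phi_j|+|\partial_t\phi_j|\apprle 4^j$, and define the De Giorgi quantity
\[
    Y_j := \frac{1}{|Q_j|}\iint_{Q_j} (w_{x_i}-k_j)_-^2 \ dz \ \Big/\ \mu^2 ,
\]
or a similar scale-invariant version. The goal is a recursive inequality $Y_{j+1}\le C\, b^{j} \, Y_j^{1+\kappa}$ with $\kappa>0$ so that \cref{iteration} forces $Y_\infty=0$, which says exactly $w_{x_i}\ge\mu/4$ on $Q_{1/2}$, hence $u_{x_i}\ge\mu/4$ on $Q_{r/2}^\mu$.

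The engine is \cref{energy_w} applied with $k=k_j$ and $\phi=\phi_j$. On the support of $(w_{x_i}-k_j)_-$ one has $w_{x_i}\le k_j\le\mu/2$, so $|\nabla w|$ is comparable to $\mu$ from above by \cref{3.3}, and the factors $(s^2+|\nabla w|^2)^{(p-2)/2}$, $(s^2+|\nabla w|^2)^{p/2}$, $(s^2+|\nabla w|^2)^{(p-1)/2}$ appearing in the energy estimate can all be replaced, up to constants depending only on $A,p$, by the corresponding powers of $\mu$; crucially the lower ellipticity term contributes $C_0\mu\cdot\mu^{p-2}\iint|\nabla(w_{x_i}-k_j)_-|^2\phi_j^2$. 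After dividing through by $\mu^{p-1}$ (absorbing all the $\mu$-powers, which balance because the equation \cref{def_w} carries exactly the weights $\mu^{p-1}$ in time and $\mu$ in space), \cref{energy_w} becomes a clean parabolic energy inequality for $v_j:=(w_{x_i}-k_j)_-$:
\[
    \sup_{t\in I_j}\int_{B_j} v_j^2\phi_j^2\ dx + \iint_{Q_j}|\nabla(v_j\phi_j)|^2\ dz \apprle_{A,p} 4^j\, \mu^2\,\iint_{Q_j}\chi_{\{w_{x_i}<k_j\}}\ dz + \text{(lower order)}.
\]
Then I would apply the parabolic Sobolev embedding \cref{par_sob_emb} (with $\tp=2$, so $\tq=2\frac{N+2}{N}$) to $v_j\phi_j$, together with the level-gap estimate $(k_j-k_{j+1})^2=\mu^2 4^{-j-2}$ that converts a bound on $\{w_{x_i}<k_{j+1}\}$ into a power of $Y_j$ via Chebyshev (\cref{chebyschev}). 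Combining Hölder's inequality between the $L^2$ and $L^{2(N+2)/N}$ norms yields the gain: $\iint_{Q_j}v_j^2 \apprle (\text{energy})\cdot|\{v_j>0\}\cap Q_j|^{2/(N+2)}$, and estimating $|\{v_j>0\}|\le (k_j-k_{j+1})^{-2}\iint v_{j}^2$ produces $Y_{j+1}\le C\,b^j Y_j^{1+\frac{2}{N+2}}$. Choosing $\nu$ small forces $Y_0\le C^{-(N+2)/2}b^{-(N+2)^2/4}$, the smallness condition in \cref{iteration}.

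The main obstacle is bookkeeping the $\mu$-powers and the dependence on $A$: one must verify that after dividing \cref{energy_w} by $\mu^{p-1}$ every remaining coefficient is a pure constant times a dimensionless quantity, and in particular that the first-order "lower order" term $2C_1\mu\iint(s^2+|\nabla w|^2)^{(p-1)/2}v_j(|\phi||\nabla^2\phi|+|\nabla\phi|^2)$ — which is linear, not quadratic, in $v_j$ — can be absorbed. This is handled by Young's inequality: write $\mu^{p-1}\cdot\mu^{p-1}v_j\cdot 4^j \le \varepsilon\,\mu^{p-1}\mu^{p-2}|\nabla(v_j\phi_j)|^2$-type terms plus $C_\varepsilon 4^{2j}\mu^{p}\chi_{\{v_j>0\}}$, i.e. one pays an extra $\mu^{-(p-2)}$ which is bounded since $\mu\ge 1$ (here $\mu\le\mu_0$ and $\mu_0\ge1$), or more cleanly one keeps track that the linear term contributes $|\{v_j>0\}|^{1/2}Y_j^{1/2}$ which, by Chebyshev, is itself dominated by a higher power of $Y_j$ times $b^j$. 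A second minor point is the admissibility of differentiating \cref{def_w} in $x_i$ and using $(w_{x_i}-k)_-\phi^2$ as a test function — this is exactly what the regularization scheme of \cite{KM2} (invoked in \cref{regularize}) provides, so it is legitimate in the approximating problems and the final estimate, being quantitative and independent of the regularization, passes to the limit.
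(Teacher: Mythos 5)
Your overall strategy (rescale via \cref{w_rescale}, run a De Giorgi iteration on negative truncations of $w_{x_i}$ with the energy estimate of \cref{energy_w}, parabolic Sobolev embedding and \cref{iteration}) is the same as the paper's proof of \cref{lemma3.1}, but there is a genuine gap at the step you call "crucial". You claim that on the support of $(w_{x_i}-k_j)_-$ the weight $(s^2+|\nabla w|^2)^{\frac{p-2}{2}}$ in the coercivity term of \cref{energy_w} can be replaced by $\mu^{p-2}$ up to constants depending on $A,p$, using only the upper bound \cref{3.3_w}. This is true for $p\le 2$ (negative exponent, so an upper bound on $|\nabla w|$ gives a lower bound on the weight), but false for $p>2$: there the hypothesis $s+\sup|\nabla w|\le A\mu$ bounds the weight from \emph{above}, and on the set $\{w_{x_i}<k_j\}$, where $\nabla (w_{x_i}-k_j)_-$ is supported, $w_{x_i}$ may be close to zero or negative and $|\nabla w|$ may be arbitrarily small, so no lower bound $(s^2+|\nabla w|^2)^{\frac{p-2}{2}}\gtrsim \mu^{p-2}$ is available. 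Consequently your "clean parabolic energy inequality" with the unweighted gradient term $\iint|\nabla(v_j\phi_j)|^2$ does not follow from \cref{energy_w} in the degenerate range, and this is exactly the point where the singular/degenerate dichotomy would resurface.

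The paper circumvents this with an extra device you are missing: instead of $(w_{x_i}-k_m)_-$ itself, it feeds the Sobolev embedding the double truncation $\tw_m$ (truncated at both $k_m$ and $k_{m+1}$), whose spatial gradient is supported in $Q_1\setminus\{w_{x_i}<k_{m+1}\}$. There one has $w_{x_i}\ge k_{m+1}\ge \mu/4$, hence $|\nabla w|\ge \mu/4$, which yields $\mu^{p-1}\apprle (s^2+|\nabla w|^2)^{\frac{p-1}{2}}\le A\mu\,(s^2+|\nabla w|^2)^{\frac{p-2}{2}}$ (this is \cref{9.56} together with \cref{3.24}); only then is the gradient piece of the $V^{2,2}$ norm controlled by the coercivity term of \cref{energy_w}, uniformly in $p$. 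Your iteration scheme can be repaired by inserting exactly this two-level truncation (the level gap $k_m-k_{m+1}\approx \mu\,2^{-m}$ still converts the measure of the sublevel set into the quantity being iterated, as in \cref{3.21}), but as written the key coercivity step is unjustified for $p>2$. A secondary error: your absorption of the linear term invokes $\mu\ge 1$, which does not hold --- only $\mu\le\mu_0$ with $\mu_0\ge 1$ is available, and in the covering argument $\mu=\eta^n\mu_0$ can be arbitrarily small; your alternative absorption via Chebyshev is the right fix, and in fact the paper avoids the issue entirely because after restricting to $\{w_{x_i}\ge k_{m+1}\}$ all terms scale like $\mu^{p+1}$ (see \cref{3.25}).
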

Following the rescaling from \cref{w_rescale}, we can restate the following equivalent version of \cref{prop3.1} for $w$ as follows:
\begin{lemma}\label{lemma3.1}
    Suppose 
    \begin{equation}\label{3.3_w}
        s + \sup_{Q_1} \|\nabla w\| \leq A \mu
    \end{equation}
    holds for some $A \geq 1$. For any $i \in \{1,2,\ldots,N\}$,  there exists universal constant $\nu \in (0,1/2)$ such that if
    \begin{equation}\label{9.48}
        | \{ (x,t) \in Q_1 : w_{x_i} < \mu/2\}| \leq \nu |Q_1|,
    \end{equation}
    holds, then we have the following conclusion:
    \begin{equation*}
        w_{x_i} \geq \frac{\mu}{4} \txt{on} Q_{1/2}.
    \end{equation*}
\end{lemma}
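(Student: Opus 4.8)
The plan is to prove the equivalent rescaled statement by a De~Giorgi iteration for the scalar $w_{x_i}$ from below, based on the Caccioppoli inequality of \cref{energy_w}, with one non-routine ingredient: a truncation that forces the degenerate/singular weight $(s^2+|\nabla w|^2)^{\frac{p-2}{2}}$ to behave uniformly for all $\tfrac{2N}{N+2}<p<\infty$. First I would fix $\sigma=\tfrac18$, the decreasing levels $k_n:=\tfrac\mu4+\tfrac\mu{2^{n+2}}$ (so $k_0=\mu/2$, $k_n\downarrow\mu/4$), the shrinking cylinders $Q_n:=B_{r_n}\times(-r_n^2,r_n^2)$ with $r_n:=\tfrac12+\tfrac1{2^{n+1}}$ (so $Q_0=Q_1$, $Q_\infty=Q_{1/2}$), and cut-offs $\phi_n\in C^\infty(Q_n)$ with $\phi_n\equiv1$ on $Q_{n+1}$, $\phi_n=0$ on $\pa_pQ_n$ and $|\nabla\phi_n|^2+|\nabla^2\phi_n|+|\phi_{n,t}|\lesssim 4^n$. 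The quantity to iterate is $A_n:=|\{w_{x_i}<k_n\}\cap Q_n|$, and it suffices to prove $A_n\to0$, since then $|\{w_{x_i}<\mu/4\}\cap Q_{1/2}|=0$. One tests \cref{energy_w} with $k=k_n$ and $\phi=\phi_n$ and then works with the truncation $G_n:=\min\{(w_{x_i}-k_n)_-,\sigma\mu\}$, for which $\nabla G_n=-\nabla w_{x_i}\,\chi_{\{k_n-\sigma\mu<w_{x_i}<k_n\}}$.

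The key observation is that on $\{\nabla G_n\neq0\}=\{k_n-\sigma\mu<w_{x_i}<k_n\}$ one has $w_{x_i}>k_n-\sigma\mu\ge\mu/8$, hence $|\nabla w|\ge w_{x_i}>\mu/8$; so for $p\ge2$ the weight satisfies $(s^2+|\nabla w|^2)^{\frac{p-2}{2}}\ge(\mu/8)^{p-2}$, while for $p\le2$ the a~priori bound \cref{3.3_w} gives $s^2+|\nabla w|^2\le2A^2\mu^2$ and hence $(s^2+|\nabla w|^2)^{\frac{p-2}{2}}\ge(2A^2)^{\frac{p-2}{2}}\mu^{p-2}$. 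In both regimes the diffusion weight is $\gtrsim_{A,p}\mu^{p-2}$ precisely where $\nabla G_n$ lives — this is the unified substitute for intrinsic scaling. At the same time \cref{3.3_w} bounds each factor $(s^2+|\nabla w|^2)^{\bullet}$ on the right of \cref{energy_w} by a constant depending on $A,p$ times the matching power of $\mu$, and $0\le G_n\le\sigma\mu$, $0\le(w_{x_i}-k_n)_-\le2A\mu$ on the support. Feeding this into \cref{energy_w}, restricting the diffusion term on the left to $\{\nabla G_n\neq0\}$, using $(w_{x_i}-k_n)_-\ge G_n$ in the supremum term, and dividing by $\mu^{p-1}$ gives
\begin{equation*}
\sup_{-r_n^2<t<r_n^2}\int_{B_{r_n}}G_n^2\phi_n^2\,dx+\iint_{Q_n}|\nabla G_n|^2\phi_n^2\,dz\ \lesssim\ 4^n\mu^2A_n ,
\end{equation*}
and therefore $\iint_{Q_n}|\nabla(G_n\phi_n)|^2\,dz\lesssim 4^n\mu^2A_n$ as well.

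Now I would apply the parabolic Sobolev inequality \cref{par_sob_emb} with $\tp=2$ to $G_n\phi_n\in V^{2,2}_0(Q_n)$ (extended by zero) and then H\"older's inequality on $\{G_n\phi_n\neq0\}\subseteq\{w_{x_i}<k_n\}\cap Q_n$, obtaining $\iint_{Q_{n+1}}G_n^2\,dz\le\iint_{Q_n}(G_n\phi_n)^2\,dz\lesssim 4^n\mu^2A_n^{1+\frac2{N+2}}$. On the other hand, $(w_{x_i}-k_n)_-\ge k_n-k_{n+1}=\mu/2^{n+3}\le\sigma\mu$ on $\{w_{x_i}<k_{n+1}\}$, so $G_n\ge\mu/2^{n+3}$ there and $(\mu/2^{n+3})^2A_{n+1}\le\iint_{Q_{n+1}}G_n^2\,dz$; combining,
\begin{equation*}
A_{n+1}\ \le\ C\,b^{\,n}\,A_n^{1+\frac2{N+2}},\qquad b=16,\quad C=C(N,p,C_0,C_1,A).
\end{equation*}
With $X_n:=A_n/|Q_1|$ this is exactly the recursion of \cref{iteration} with $\alpha=\tfrac2{N+2}$, so there is $\nu=\nu(N,p,C_0,C_1,A)\in(0,\tfrac12)$ — universal, since $A$ is universally bounded in the applications (e.g.\ $A=2$ under the hypotheses of \cref{alt2}) — such that $X_0\le\nu$ forces $X_n\to0$. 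Since \cref{9.48} states precisely $X_0=|\{w_{x_i}<\mu/2\}\cap Q_1|/|Q_1|\le\nu$, we conclude $A_\infty=0$, i.e.\ $w_{x_i}\ge\mu/4$ a.e.\ on $Q_{1/2}$.

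I expect the only delicate step — and the whole point of the ``unified'' mechanism — to be the uniform lower bound $(s^2+|\nabla w|^2)^{\frac{p-2}{2}}\gtrsim_{A,p}\mu^{p-2}$ on the support of $\nabla G_n$: for degenerate $p$ it is guaranteed by truncating at the height $\sigma\mu<\mu/4$ (which keeps $w_{x_i}$, and hence $|\nabla w|$, bounded below wherever the gradient of the tested function is nonzero), and for singular $p$ it is guaranteed by the a~priori Lipschitz bound \cref{3.3_w} (which keeps $|\nabla w|$ bounded above); once this is in place the rest is a textbook De~Giorgi iteration and the singular/degenerate dichotomy never reappears.
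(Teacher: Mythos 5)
Your proposal is correct and follows essentially the same route as the paper: a De Giorgi iteration on $w_{x_i}$ below the level $\mu/2$ driven by the Caccioppoli inequality of \cref{energy_w}, where a truncation guarantees $w_{x_i}\gtrsim\mu$ (hence the weight $(s^2+|\nabla w|^2)^{\frac{p-2}{2}}\gtrsim_{A,p}\mu^{p-2}$) wherever the truncated gradient is active, while the a priori bound \cref{3.3_w} controls all right-hand side terms, followed by the parabolic Sobolev embedding and \cref{iteration}. The only deviations — fixed levels $k_n=\tfrac\mu4+\tfrac\mu{2^{n+2}}$ with a fixed truncation height $\mu/8$ instead of the paper's $H$-dependent levels truncated at $k_m-k_{m+1}$ (which also lets you skip the paper's Step 1), and a brief $p\ge2$ versus $p\le2$ check for the weight bound in place of the paper's passage through the $(p-1)/2$ power times $A\mu$ — are cosmetic and do not change the argument.
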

\begin{proof}
Let us define the following constants:
\begin{equation*}
        k_m:= k_0 - \frac{H}{8(1+A)} \lbr 1 - \frac{1}{2^m}\rbr \txt{where} H:= \sup_{Q_1} (w_{x_i}-k_0)_- \txt{and} k_0:= \frac{\mu}{2}.
    \end{equation*}%
    This choice of $k_m$ satisfy the following bounds:
    \begin{equation}\label{3.18}
        k_m - k_{m+1} \geq \frac{\mu}{2^{m+6}(1+A)}, \qquad k_m \geq \frac{\mu}{4} \txt{and} k_m \rightarrow k_{\infty} = k_0 - \frac{H}{8(1+A)} > \frac{\mu}{4}.
    \end{equation}
    Let us now define the following sequence of dyadic parabolic cylinders:
    \begin{equation*}
        Q_m:= Q_{\rho_m} \txt{where} \rho_m := \frac12 + \frac{1}{2^{m+1}}.
    \end{equation*}
    Note that $Q_m \rightarrow Q_{1/2}$ and $Q_0 = Q_1$. 
    Furthermore, let us consider the following cut-off function $\eta_m \in C^{\infty}(Q_m)$ with $\eta_m = 0$ on $\pa_pQ_m$ and $\eta_m \equiv 1$ on $ Q_{m+1}$. The following bounds hold,
    \begin{equation*}
        |\nabla^ 2\eta_m| + |\nabla\eta_m|^2 + |(\eta_m)_t| \leq C(n) 4^m.
    \end{equation*}%
    We will split the proof of the lemma into several steps.
    \begin{description}[leftmargin=*]
        \ditem{\underline{Step $1$:}}{step1} In this step, we show that without loss of generality, we can assume $4H \geq \mu$. Suppose not, then we would have $4H < \mu$ which implies
        \begin{equation*}
        \sup_{Q_1} (w_{x_i}-k_0)_- = \frac{\mu}{2} - \inf_{Q_1} w_{x_i} < \frac{\mu}{4}.
    \end{equation*}
    This says  $w_{x_i} > \frac{\mu}{4}$ and the desired conclusion of \cref{lemma3.1}  follows.
        \ditem{\underline{Step $2$:}}{step2} In this step, we will apply \cref{par_sob_emb} to obtain an estimate for the level sets. In order to do this, let us define 
        \begin{equation*}
            A_m := \{(x,t) \in Q_m: w_{x_i} < k_m\},
    \end{equation*}%
    and consider the following function
    \begin{equation*}
        \tw_m := \left\{ \begin{array}{ll}
                            0 &\ \  \text{if} \ \ w_{x_i} > k_m, \\
                            k_m - w_{x_i} &\ \  \text{if} \ \ k_m \geq w_{x_i} > k_{m+1}, \\
                            k_m - k_{m+1} &\ \  \text{if} \ \  k_{m+1} \geq w_{x_i}. 
                        \end{array}\right.
    \end{equation*}%
    Since $\eta_m = 1$ on $Q_{m+1}$, we obtain the following sequence of estimates: 
    \begin{equation}\label{3.21}
        \begin{array}{rcl}
            \mu^{p-1} (k_m - k_{m+1})^2 |A_{m+1}| & = & \mu^{p-1} \|\tw_m\|_{L^2(A_{m+1})}^2\\
            & \overset{\redlabel{321a}{a}}{\leq} & \mu^{p-1} \|\tw_m\eta_m\|_{L^2(Q_{m})}^2\\
            & \overset{\redlabel{321b}{b}}{\apprle} &  \mu^{p-1} \|\tw_m\eta_m\|_{V^{2,2}(Q_{m})}^2 |A_m|^{\frac{2}{N+2}},
        \end{array}
    \end{equation}
    where to obtain \redref{321a}{a}, we enlarged the domain and made use of the fact that $\spt(\eta_m) \subset Q_m$ and to obtain \redref{321b}{b}, we made use of \cite[Corollary 3.1 - Page 9]{DB93} noting that $\tw_m \eta_m$ is non-negative.  This additionally also implies
    \[
        |\{ Q_m: \tw_m \eta_m >0 \}| \leq | \{Q_m: \tw_m >0\}| = |A_m|.
    \]
    Then, observing that 
    \[
        \tw_m \leq (w_{x_i} - k_m)_- \txt{and} |\nabla \tw_m| \leq |\nabla (w_{x_i} - k_m)_-|  \lsb{\chi}{Q_1 \setminus \{w_{x_i} < k_{m+1}\}},
    \]
    and recalling \cref{func_space},  we get
    \begin{equation}\label{9.55}
        \begin{array}{rcl}
            \mu^{p-1} \|\tw_m\eta_m\|_{V^2(Q_{m})}^2 & \leq & \sup_{-1<t<0} \mu^{p-1} \int_{B_1} (w_{x_i}-k_m)_-^2 \eta_m^2 \ dx \\
            && \qquad + \mu^{p-1} \iint_{Q_1} |\nabla (w_{x_i} - k_m)_-|^2 \lsb{\chi}{Q_1 \setminus \{w_{x_i} < k_{m+1}\}} \eta_m^2 \ dz\\
            &&\qquad  + \mu^{p-1} \iint_{Q_1} (w_{x_i} - k_m)_-^2 |\nabla \eta_m|^2\ dz.
        \end{array}
    \end{equation}
        \ditem{\underline{Step $3$:}}{step3} In this step, we shall estimate \cref{9.55} and obtain a suitable decay of the level set $A_m$ as follows: From \cref{3.18}, we see that 
        \begin{equation}\label{9.56}
            \mu \leq 4 k_{m+1} \leq 4 w_{x_i} \leq 4|\nabla w| \txt{on} Q_1 \setminus \{w_{x_i} < k_{m+1}\}.
        \end{equation}
        Thus making use if \cref{9.56} into \cref{9.55}, we get
    \begin{equation}\label{3.23}
        \begin{array}{rcl}
            \mu^{p-1} \|\tw_m\eta_m\|_{V^2(Q_{m})}^2 & \leq & \sup_{-1<t<0} \mu^{p-1} \int_{B_1} (w_{x_i}-k_m)_-^2 \eta_m^2 \ dx \\
            && \qquad +  \iint_{Q_1} (s^2 + |\nabla w|^2 )^{\frac{p-1}{2}} |\nabla (w_{x_i} - k_m)_-|^2 \lsb{\chi}{Q_1 \setminus \{w_{x_i} < k_{m+1}\}} \eta_m^2 \ dz\\
            &&\qquad  + \mu^{p-1} \iint_{Q_1} (w_{x_i} - k_m)_-^2 |\nabla \eta_m|^2\ dz.
        \end{array}
    \end{equation}
    From \cref{3.3_w}, we see that 
    \begin{equation}\label{3.24}    
            A \mu \iint_{Q_1} (s^2 + |\nabla w|^2)^{\frac{p-2}{2}} |\nabla (w_{x_i}-k)_-|^2 \phi^2 \ dz \geq  \iint_{Q_1} (s^2 + |\nabla w|^2)^{\frac{p-1}{2}} |\nabla (w_{x_i}-k)_-|^2 \phi^2\ dz.
    \end{equation}
    Thus substituting \cref{3.24} into \cref{3.23} and making use of \cref{energy_w} to estimate each of the terms appearing on the right hand side of \cref{3.23} using \cref{3.3_w}, we get
\begin{equation}\label{3.25}
        \begin{array}{rcl}
            \mu^{p-1} \|\tw_m\eta_m\|_{V^2(Q_{m})}^2 & \leq & \mu \iint_{Q_1}(s^2+ |\nabla w|^2)^{\frac{p}{2}} |\nabla \eta_m|^2\lsb{\chi}{\{w_{x_i} \leq k_m\}}\ dz\\
            && + \mu^{p} \iint_{Q_1} (w_{x_i} - k_m)_- ( |\eta_m||\nabla^2\eta_m| + |\nabla \eta_m|^2)\\
            && + \mu^{p-1} \iint_{Q_1} (w_{x_i} - k_m)_-^2  |\nabla  \eta_m|^2 \\
            & \leq & C \mu^{p+1} 4^m  |A_m|.
\end{array}
\end{equation}
Thus combining \cref{3.25} with \cref{3.21} and making use of \cref{3.18}, we get
\begin{equation*}
    \mu^{p-1}\frac{\mu^2}{4^{m+6} (1+A)^2} |A_{m+1}| \leq C 4^m\mu^{p+1} |A_m|^{1+\frac{2}{N+2}} \Longleftrightarrow |A_{m+1}| \leq C16^m  |A_m|^{1+\frac{2}{N+2}}.
\end{equation*}%
We can now apply \cref{iteration} to obtain the existence of a universal constant $\nu \in (0,1)$ such that if
\[
    |A_0|= |\{Q_1: w_{x_i} < \mu/2\}|  < \nu |Q_1|,
\]
then $|A_m| \rightarrow 0$ as $m \rightarrow \infty$. In particular, this says
\[
    w_{x_i} \geq \frac{\mu}{4} \txt{on} Q_{1/2}.
\]
    \end{description}
This completes the proof of the lemma. 
\end{proof}

Following analogous calculations, we can also obtain the dual version of \cref{prop3.1}.
\begin{proposition}\label{prop_upper_bnd}
    Let $r \in (0,R]$ and assume that 
    \begin{equation*}
        s + \sup_{Q_r^{\mu}} \|\nabla u\| \leq A \mu
    \end{equation*}
    holds for some $A \geq 1$. For any $i \in \{1,2,\ldots,N\}$,  there exists universal constant $\nu \in (0,1/2)$ such that if
    \begin{equation}\label{9.48_alt}
        | \{ (x,t) \in Q_r^{\mu} : u_{x_i} > -\mu/2\}| \leq \nu |Q_r^{\mu}|,
    \end{equation}
    holds, then we have the following conclusion:
    \begin{equation*}
        u_{x_i} \leq -\frac{\mu}{4} \txt{on} Q_{r/2}^{\mu}.
    \end{equation*}
\end{proposition}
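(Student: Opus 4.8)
The plan is to deduce \cref{prop_upper_bnd} from \cref{prop3.1} by exploiting the reflection symmetry $u \mapsto -u$ of the equation. Set $v := -u$ and $\tilde{\aa}(\zeta) := -\aa(-\zeta)$. Since $\nabla v = -\nabla u$, one checks directly from \cref{main_holder} that
\[ v_t - \dv \tilde{\aa}(\nabla v) = -\lbr u_t - \dv\aa(\nabla u)\rbr = 0 \txt{on} 4Q_0, \]
so $v$ is a weak solution of an equation of the form \cref{main_holder} with $\tilde{\aa}$ in place of $\aa$. Moreover $\tilde{\aa}'(\zeta) = \aa'(-\zeta)$, and since $|\zeta|^2+s^2$ is invariant under $\zeta \mapsto -\zeta$, the structure conditions \cref{structure_aa_holder} for $\tilde{\aa}$ follow immediately from those for $\aa$, with the \emph{same} constants $C_0, C_1$ and the same parameter $s$.

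Next I would transfer the hypotheses. Since $\|\nabla v\| = \|\nabla u\|$ pointwise, the bound $s + \sup_{Q_r^{\mu}}\|\nabla u\| \leq A\mu$ is exactly $s + \sup_{Q_r^{\mu}}\|\nabla v\| \leq A\mu$. Since $v_{x_i} = -u_{x_i}$, we have the identity of sets $\{(x,t) \in Q_r^{\mu} : u_{x_i} > -\mu/2\} = \{(x,t) \in Q_r^{\mu} : v_{x_i} < \mu/2\}$, so \cref{9.48_alt} is precisely the measure hypothesis of \cref{prop3.1} applied to $v$. Invoking \cref{prop3.1} for $v$ then yields $v_{x_i} \geq \mu/4$ on $Q_{r/2}^{\mu}$, i.e. $u_{x_i} \leq -\mu/4$ on $Q_{r/2}^{\mu}$, which is the assertion; the constant $\nu$ is the same universal constant produced in \cref{prop3.1}.

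If one instead prefers to argue directly --- this is what the phrase ``analogous calculations'' refers to --- one repeats the proof of \cref{lemma3.1} after the rescaling \cref{w_rescale}, replacing the truncations $(w_{x_i} - k_m)_-$ by $(w_{x_i} - k_m)_+$, taking $k_0 := -\mu/2$, $H := \sup_{Q_1}(w_{x_i} - k_0)_+$, and the \emph{increasing} levels $k_m := k_0 + \frac{H}{8(1+A)}\lbr 1 - 2^{-m}\rbr$. The only new ingredient is the dual of \cref{energy_w}, obtained by differentiating \cref{def_w} in $x_i$ and testing with $(w_{x_i} - k)_+\phi^2$; the three resulting terms are estimated exactly as in the proof of \cref{energy_w}, the key sign being that the principal term $\mu\iint_{Q_1}\iprod{\aa'(\nabla w)\nabla w_{x_i}}{\nabla(w_{x_i}-k)_+}\phi^2\,dz$ is still nonnegative by the ellipticity in \cref{structure_aa_holder}. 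With this energy estimate in hand the De Giorgi iteration driven by \cref{iteration} runs verbatim. There is no genuine obstacle: the statement is a symmetry of the problem, and the only point needing a moment's care is checking that $\tilde{\aa}$ inherits the ellipticity lower bound, which it does since $|\zeta|^2+s^2 = |{-\zeta}|^2+s^2$.
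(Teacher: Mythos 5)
Your argument is correct, and your primary route differs from the paper's. The paper offers no separate proof of \cref{prop_upper_bnd}: it simply asserts that the result follows ``following analogous calculations,'' i.e.\ by rerunning the proof of \cref{lemma3.1} with the truncations $(w_{x_i}-k)_-$ replaced by $(w_{x_i}-k)_+$ and the levels increasing towards a limit below $-\mu/4$ --- which is exactly your second, direct route, and your sketch of it is sound (the dual energy estimate has the right sign because $\iprod{\aa'(\nabla w)\nabla(w_{x_i}-k)_+}{\nabla(w_{x_i}-k)_+}\geq 0$, and on the set $\{w_{x_i}\leq k_{m+1}\}$ one still has $|\nabla w|\geq |w_{x_i}|\geq \mu/4$, which is the fact needed in Step 3 to trade $\mu$ for $(s^2+|\nabla w|^2)^{1/2}$). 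Your first route, via $v=-u$ and $\tilde{\aa}(\zeta)=-\aa(-\zeta)$, is a genuinely different and cleaner reduction: it requires no new computation at all, only the observations that $\tilde{\aa}$ satisfies \cref{structure_aa_holder} with the same $C_0,C_1,s$ (since $|\zeta|^2+s^2$ is even in $\zeta$ and $\tilde{\aa}'(\zeta)=\aa'(-\zeta)$), that the hypotheses transfer verbatim because $|\nabla v|=|\nabla u|$ and $\{u_{x_i}>-\mu/2\}=\{v_{x_i}<\mu/2\}$, and --- the one point worth stating explicitly --- that the constant $\nu$ in \cref{prop3.1} is universal in the structure class, i.e.\ depends only on $(N,p,C_0,C_1,A)$ and not on the particular nonlinearity, so the same $\nu$ serves for the transformed equation. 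With that remark made, the symmetry argument buys brevity and immunity from sign slips in the De Giorgi iteration, while the paper's implicit direct route keeps everything within the single fixed equation; either is acceptable.
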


\subsubsection{Proof of the decay estimate from applying the linear theory}
Let us first recall the weak Harnack inequality and a decay estimate that will be needed to complete the proof of \cref{alt1}, the details can be found in \cite[Lemma 3.1]{KM}.
\begin{lemma}\label{lemma_linear}
    Let $v \in L^2(-1,1;W^{1,2}(B_1))$ be a weak solution to the linear parabolic equation 
    \[
        v_t - \dv (B(x,t) \nabla v) = 0,
    \]
    where the matrix $B(x,t)$ is bounded, measurable and satisfies
    \[
        C_0 |\zeta|^2 \leq \iprod{B(x,t)\zeta}{\zeta} \txt{and} |B(x,t)| \leq C_1,
    \]
    for any $\zeta \in \RR^N$ and $0<C_0 \leq C_1$ are fixed constants. Then there exists a constant $C = C(N,C_0,C_1) \geq 1$ and $\be = \be(N,C_0,C_1) \in (0,1)$ such that the following estimates are satisfied:
    \begin{gather*}
        \sup_{Q_{1/2}}|v| \leq C \lbr \fiint_{Q_1} |v|^q \ dz \rbr^{\frac{1}{q}} \txt{for any $q \in [1,2]$,}\\
        \lbr \fiint_{Q_{\de}} |v - \avgs{v}{Q_{\de}}|^q \ dz \rbr^{\frac{1}{q}} \leq C \de^{\be} \lbr \fiint_{Q_{1}} |v - \avgs{v}{Q_{1}}|^q \ dz \rbr^{\frac{1}{q}} \txt{whenever $q \in [1,2]$ and $\de \in (0,1)$.}
    \end{gather*}
\end{lemma}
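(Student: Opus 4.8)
The plan is to recognize this as the classical De Giorgi--Nash--Moser theory for linear parabolic equations in divergence form with bounded measurable, uniformly elliptic coefficients and no lower-order terms; the two displayed estimates are, respectively, interior local boundedness and the Campanato form of interior H\"older continuity. The common starting point is the Caccioppoli (energy) inequality: testing the equation with $(v-k)_\pm \zeta^2$ for a cut-off $\zeta$ supported in a parabolic cylinder inside $Q_1$, and using $C_0|\zeta|^2 \le \iprod{B(x,t)\zeta}{\zeta}$, $|B(x,t)| \le C_1$ together with Young's inequality, one gets for every $k \in \RR$ and concentric cylinders $Q_{\rho} \subset Q_{\rho'} \subset Q_1$
\[
    \sup_{t}\int_{B_\rho}(v-k)_\pm^2 \ dx + \iint_{Q_\rho}|\nabla(v-k)_\pm|^2 \ dz \apprle \frac{C(N,C_0,C_1)}{(\rho'-\rho)^2}\iint_{Q_{\rho'}}(v-k)_\pm^2 \ dz .
\]
Since the equation has no zeroth-order term, $v-c$ solves the same equation for every constant $c\in\RR$, a fact that will be used repeatedly.

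\textbf{First estimate.} Feeding the Caccioppoli inequality into the parabolic Sobolev embedding \cref{par_sob_emb} and running the De Giorgi iteration via \cref{iteration} gives local boundedness $\sup_{Q_{1/2}}|v| \le C(\fiint_{Q_1}|v|^2 \ dz)^{1/2}$, together with its scaled form $\sup_{Q_{\rho}}|v| \le C(\rho'-\rho)^{-\kappa}(\fiint_{Q_{\rho'}}|v|^2 \ dz)^{1/2}$ for $\tfrac12\le\rho<\rho'\le1$, with $\kappa=\kappa(N)$. To pass from the exponent $2$ to an arbitrary $q\in[1,2]$, interpolate $\iint_{Q_{\rho'}}|v|^2 \ dz \le (\sup_{Q_{\rho'}}|v|)^{2-q}\iint_{Q_{\rho'}}|v|^q \ dz$, insert this into the scaled bound, and use Young's inequality to absorb a small fraction of $\sup_{Q_{\rho'}}|v|$; a standard absorption/iteration lemma then removes the supremum from the right and yields $\sup_{Q_{1/2}}|v| \le C(\fiint_{Q_1}|v|^q \ dz)^{1/q}$ with $C=C(N,C_0,C_1)$. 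Applying this to $v-c$ shows the same estimate holds for $v-c$, which is what is needed below.

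\textbf{Second estimate.} This is the substantial ingredient: interior H\"older continuity. Using the Caccioppoli inequalities for \emph{both} $(v-k)_+$ and $(v-k)_-$ together with the isoperimetric-type measure lemmas (in the spirit of \cref{Poincare} and \cref{sobolev-poincare}), De Giorgi's method produces an oscillation decay $\operatorname{osc}_{Q_{\sig}}v \le \theta\,\operatorname{osc}_{Q_{2\sig}}v$ for a universal $\theta=\theta(N,C_0,C_1)\in(0,1)$ and all small $\sig$, and iterating gives $\operatorname{osc}_{Q_{\de}}v \le C\de^{\be}\operatorname{osc}_{Q_{1/2}}v$ for all $\de\in(0,\tfrac12)$ with $\be=\be(N,C_0,C_1)\in(0,1)$ --- this is precisely the classical parabolic De Giorgi--Nash theorem. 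Granting it, for $\de\in(0,\tfrac12]$ we estimate, using that $\avgs{v}{Q_\de}$ is the best $L^q$-constant,
\[
    \lbr\fiint_{Q_\de}|v-\avgs{v}{Q_\de}|^q \ dz\rbr^{\frac1q} \le \operatorname{osc}_{Q_\de}v \le C\de^{\be}\operatorname{osc}_{Q_{1/2}}v \le C\de^{\be}\sup_{Q_{1/2}}|v-\avgs{v}{Q_1}| ,
\]
and then apply the first estimate to the solution $v-\avgs{v}{Q_1}$ to bound the last term by $C(\fiint_{Q_1}|v-\avgs{v}{Q_1}|^q \ dz)^{1/q}$. For the remaining range $\de\in(\tfrac12,1)$ the claim is trivial, since $\de^{\be}\ge(\tfrac12)^{\be}$ while $\fiint_{Q_\de}|v-\avgs{v}{Q_\de}|^q \ dz \le \fiint_{Q_\de}|v-\avgs{v}{Q_1}|^q \ dz \le \tfrac{|Q_1|}{|Q_\de|}\fiint_{Q_1}|v-\avgs{v}{Q_1}|^q \ dz \le 2^{N+2}\fiint_{Q_1}|v-\avgs{v}{Q_1}|^q \ dz$. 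The main obstacle is thus entirely concentrated in the parabolic De Giorgi--Nash H\"older estimate; everything else is bookkeeping with the energy inequality, Sobolev embedding, and the iteration lemmas already recorded. A complete treatment is in \cite[Lemma 3.1]{KM}.
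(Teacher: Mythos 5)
Your outline is correct, but note that the paper does not prove this lemma at all: it is recalled from \cite[Lemma 3.1]{KM} as classical De Giorgi--Nash--Moser theory for linear uniformly parabolic equations with bounded measurable coefficients, which is exactly the theory your sketch reproduces (Caccioppoli inequality plus De Giorgi iteration for the sup bound, interpolation/absorption to reach $q\in[1,2]$, and the oscillation decay applied to $v-\avgs{v}{Q_1}$, which solves the same equation, to get the Campanato-type estimate). Since your argument likewise defers the one substantive ingredient---the parabolic H\"older/oscillation-decay theorem---to the classical literature, it is in substance the same treatment as the paper's citation, and the remaining bookkeeping you supply is sound.
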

 We now have all the estimates needed to prove \cref{alt1} which will be given the following lemma. The proof follows very closely to \cite[Lemma 3.2]{KM} and hence we will only present the rough sketch of the proof.
 \begin{lemma}\label{lemma3.2}
     Assume that in the cylinder $Q_r^{\mu}$, the following is satisfied for a fixed $A \geq 1$:
     \begin{equation*}%
         0 < \frac{\mu}{4} \leq \|\nabla u\|_{L^{\infty}(Q_r^{\mu})} \leq s +\|\nabla u\|_{L^{\infty}(Q_r^{\mu})} \leq A \mu.
     \end{equation*}%
     Then there exists constants $\be= \be(N,p,C_0,C_1,A) \in (0,1)$ and $C = C(N,p,C_0,C_1,A) \geq 1$ such that the following holds for any $\de \in (0,1)$ and $q \geq 1$:
     \begin{equation*}%
         \lbr \fiint_{Q_{\de r}^{\mu}} |\nabla u - \avgs{\nabla u}{Q_{\de r}^{\mu}}|^q \ dz \rbr^{\frac{1}{q}} \leq C \de^{\be} \lbr \fiint_{Q_{r}^{\mu}} |\nabla u - \avgs{\nabla u}{Q_{r}^{\mu}}|^q \ dz \rbr^{\frac{1}{q}}.
     \end{equation*}%
 \end{lemma}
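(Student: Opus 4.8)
The plan is to reduce, via the intrinsic rescaling of \cref{w_rescale}, to the linear decay estimate of \cref{lemma_linear}. First I would center $Q_r^\mu$ at the origin and set $w(x,t) = u(\mu^{-1}rx,\mu^{-p}r^2 t)/(\mu^{-1}r)$ as in \cref{w_rescale}, so that $w$ solves \cref{def_w} on $Q_1$, which after dividing by $\mu^{p-1}$ reads $w_t - \mu^{2-p}\dv\aa(\nabla w) = 0$. After this rescaling the hypothesis of the lemma becomes a pointwise bound of the shape $\mu/4 \le (s^2 + |\nabla w|^2)^{\frac12} \le \sqrt2\,A\mu$ on $Q_1$ — this is precisely what the assumption delivers in every situation where \cref{lemma3.2} is invoked, either because \cref{prop3.1} has already produced a pointwise lower bound on one component of $\nabla u$, or because one is in the uniformly parabolic regime $s \ge \mu$ (see \cref{remark8.3}).

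Next I would differentiate the equation for $w$ in a direction $x_i$; this is legitimate because, under the regularization scheme of \cref{regularize}, $w$ is smooth enough that $\nabla^2 w \in L^2_{\loc}(Q_1)$ (compare the energy estimate \cref{energy_w}). Using $\partial_i\aa(\nabla w) = \aa'(\nabla w)\nabla w_{x_i}$, each component $v := w_{x_i}$, $i = 1,\dots,N$, is a weak solution on $Q_1$ of the linear equation
\[
    v_t - \dv\lbr \tilde B(x,t)\nabla v\rbr = 0, \qquad \tilde B(x,t) := \mu^{2-p}\,\aa'(\nabla w(x,t)),
\]
and so is $v - c$ for any constant $c$, there being no zeroth-order term. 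The key step is then to verify that $\tilde B$ is uniformly elliptic and bounded with constants depending only on $N,p,C_0,C_1,A$. By \cref{structure_aa_holder}, $|\aa'(\nabla w)| \le C_1(s^2+|\nabla w|^2)^{\frac{p-2}{2}}$ and $\iprod{\aa'(\nabla w)\zeta}{\zeta} \ge C_0(s^2+|\nabla w|^2)^{\frac{p-2}{2}}|\zeta|^2$; and the two-sided pointwise bound on $(s^2+|\nabla w|^2)^{\frac12}$ from the previous paragraph forces $(s^2+|\nabla w|^2)^{\frac{p-2}{2}}$ to be comparable to $\mu^{p-2}$, with comparison constants depending on $p$ and $A$ (one distinguishes $p\ge 2$ from $p<2$ according to the sign of $\frac{p-2}{2}$ in the elementary estimate for the power). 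Multiplying through by $\mu^{2-p}$ cancels the $\mu$-dependence and yields $\tilde C_0|\zeta|^2 \le \iprod{\tilde B\zeta}{\zeta}$ and $|\tilde B| \le \tilde C_1$ with $0 < \tilde C_0 \le \tilde C_1$ depending only on $N,p,C_0,C_1,A$. I expect this to be the only genuinely delicate point: it is exactly here that the intrinsic-geometry set-up earns its keep, since without a pointwise lower bound on $(s^2+|\nabla u|^2)^{\frac12}$ the equation for $w_{x_i}$ would degenerate and no linear theory would be available.

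Finally I would apply \cref{lemma_linear} to $w_{x_i} - \avgs{w_{x_i}}{Q_1}$ for each $i$. For $q \in [1,2]$, summing the oscillation-decay inequality over $i = 1,\dots,N$ gives
\[
    \lbr \fiint_{Q_\de}|\nabla w - \avgs{\nabla w}{Q_\de}|^q \ dz\rbr^{\frac1q} \le C\de^{\be}\lbr \fiint_{Q_1}|\nabla w - \avgs{\nabla w}{Q_1}|^q \ dz\rbr^{\frac1q};
\]
for $q > 2$ one instead combines the sup-bound of \cref{lemma_linear} with the De Giorgi--Nash--Moser H\"older continuity of solutions of the above linear equation to bound the left-hand side by $C\de^{q\be}\lbr \fiint_{Q_1}|\nabla w - \avgs{\nabla w}{Q_1}|^2\ dz\rbr^{q/2}$, reducing to the case $q = 2$ already treated. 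Undoing the rescaling \cref{w_rescale} — under which $\nabla w$ is, up to the affine change of variables, exactly $\nabla u$, and both the averages and the normalized integrals are invariant — the cylinders $Q_\de$ and $Q_1$ become $Q_{\de r}^\mu$ and $Q_r^\mu$, and the displayed inequality is precisely the assertion of \cref{lemma3.2}.
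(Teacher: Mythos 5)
Your argument is essentially the paper's own proof: rescale by \cref{w_rescale}, differentiate to obtain $(w_{x_i})_t - \dv\lbr B(x,t)\nabla w_{x_i}\rbr = 0$ with $B = \mu^{2-p}\aa'(\nabla w)$, verify uniform ellipticity of $B$ from \cref{structure_aa_holder} together with the nondegeneracy hypothesis, and then invoke \cref{lemma_linear} for $q\in[1,2]$; for $q\geq 2$ the paper simply defers to the interpolation in \cite[Equations (3.33) and (3.34)]{KM}, which is the same sup-bound reduction you sketch. The only cosmetic difference is the paper's preliminary reduction to $\de\in(0,1/2)$ (the range $\de\in[1/2,1)$ being trivial by enlarging the domain), which does not change the substance.
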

 \begin{proof}
     Without loss of generality, let us assume $\de \in (0,1/2)$ noting that when $\de \in [1/2,1)$, we can enlarge the domain of integration (see \cite[Lemma 3.2 and Proposition 3.3]{KM} for the details) and obtain the desired conclusion, albeit with a larger constant, the details of which is given in \cref{prop3.3}. As in the proof of \cref{lemma3.1}, we rescale according to \cref{w_rescale}, then the hypothesis becomes 
     \begin{equation}\label{first_alt_hyp}
         0 < \frac{\mu}{4} \leq \|\nabla w\|_{L^{\infty}(Q_1)} \leq s +\|\nabla w\|_{L^{\infty}(Q_1)} \leq A \mu.
     \end{equation}
Differentiating \cref{def_w} and dividing the resulting equation by $\mu^{p-1}$, we see that $w_{x_i}$ solves
\begin{equation*}
    (w_{x_i})_t - \dv (B(x,t) \nabla w_{x_i}) = 0 \txt{where} B(x,t) = \mu^{2-p} \pa \aa(\nabla w).
\end{equation*}%
Furthermore, making use of \cref{structure_aa_holder} along with \cref{first_alt_hyp}, we see that the matrix $B(x,t)$ satisfies the following bounds:
\begin{equation}\label{w_x_i_sol}
    C^{-1} |\zeta|^2 \leq \iprod{B(x,t)\zeta}{\zeta} \leq C \lbr \frac{s+\mu}{\mu} \rbr^{p-2} |\zeta|^2 \leq C |\zeta|^2,
\end{equation}
for any $\zeta \in \RR^N$ and $C= C(N,p,C_0,C_1,A)$. Thus we can apply \cref{lemma_linear} to get the desired conclusion whenever $q \in [1,2]$. On the other hand, if $q \geq 2$, then we can follow the calculations from \cite[Equations (3.33) and (3.34)]{KM} to get the desired conclusion. This completes the proof of the lemma. 
 \end{proof}

 \subsubsection{Summary of the proof of \texorpdfstring{\cref{alt1}}.}
 Collecting all the calculations from the previous subsections, we have the following proposition, the proof of which follows verbatim as in \cite[Proposition 3.3]{KM}.
 \begin{proposition}
     \label{prop3.3}
     Assume that \cref{3.3} is in force, then there exists $\nu = \nu(N,p,C_0,C_1,A)\in (0,1/2)$ such that if there exists $i \in \{1,2,\ldots,N\}$ such that either \cref{9.48} or \cref{9.48_alt} holds, then there exists $\be = \be(N,p,C_0,C_1,A) \in (0,1)$ and $C= C(N,p,C_0,C_1,A)$ such that for any $\de \in (0,1)$, the following conclusions hold:
     \begin{gather*}
                  \lbr \fiint_{Q_{\de r}^{\mu}} |\nabla u - \avgs{\nabla u}{Q_{\de r}^{\mu}}|^q \ dz \rbr^{\frac{1}{q}} \leq C \de^{\be} \lbr \fiint_{Q_{r}^{\mu}} |\nabla u - \avgs{\nabla u}{Q_{r}^{\mu}}|^q \ dz \rbr^{\frac{1}{q}},\\
                  \|\nabla u\| \geq \frac{\mu}{4} \txt{on $Q_{r/2}^{\mu}$.}
     \end{gather*}
 \end{proposition}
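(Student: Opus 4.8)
The plan is to assemble \cref{prop3.1}, \cref{prop_upper_bnd} and \cref{lemma3.2} exactly as in \cite[Proposition 3.3]{KM}: the De Giorgi iteration converts the measure-theoretic smallness of a superlevel (or sublevel) set of a partial derivative $u_{x_i}$ into a genuine pointwise bound on the half-cylinder $Q_{r/2}^{\mu}$, and on that half-cylinder the equation solved by $u_{x_i}$ is uniformly parabolic, so the linear decay estimate of \cref{lemma3.2} applies. There is no new estimate to prove; the work is in checking that the three ingredients fit together with constants depending only on $(N,p,C_0,C_1,A)$.

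\textbf{Step 1 (pointwise bound on $Q_{r/2}^{\mu}$).} First I would invoke the De Giorgi alternative. Since \cref{3.3} is assumed, \cref{prop3.1} supplies a universal $\nu = \nu(N,p,C_0,C_1,A) \in (0,1/2)$ so that, if \cref{9.48} holds for some index $i$, then $u_{x_i} \ge \mu/4$ on $Q_{r/2}^{\mu}$; if instead \cref{9.48_alt} holds, \cref{prop_upper_bnd} gives $u_{x_i} \le -\mu/4$ on $Q_{r/2}^{\mu}$. In either case $\|\nabla u\| \ge |u_{x_i}| \ge \mu/4$ on $Q_{r/2}^{\mu}$, which is already the second assertion of the proposition.

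\textbf{Step 2 (linear decay on $Q_{r/2}^{\mu}$) and Step 3 (bookkeeping).} Combining the lower bound from Step 1 with the a priori control \cref{3.3}, on the cylinder $Q_{r/2}^{\mu}$ we have
\begin{equation*}
0 < \frac{\mu}{4} \le \|\nabla u\|_{L^{\infty}(Q_{r/2}^{\mu})} \le s + \|\nabla u\|_{L^{\infty}(Q_{r/2}^{\mu})} \le A\mu ,
\end{equation*}
which is precisely the hypothesis of \cref{lemma3.2} on $Q_{r/2}^{\mu}$. Applying that lemma, for every $\tilde{\de} \in (0,1)$ and every $q \ge 1$ there are $\be, C$ depending only on $(N,p,C_0,C_1,A)$ with
\begin{equation*}
\lbr \fiint_{Q_{\tilde{\de} r/2}^{\mu}} |\nabla u - \avgs{\nabla u}{Q_{\tilde{\de} r/2}^{\mu}}|^q \ dz \rbr^{\frac1q} \le C \tilde{\de}^{\be} \lbr \fiint_{Q_{r/2}^{\mu}} |\nabla u - \avgs{\nabla u}{Q_{r/2}^{\mu}}|^q \ dz \rbr^{\frac1q}.
\end{equation*}
It remains to pass back to $Q_r^{\mu}$ and cover all $\de \in (0,1)$. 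For $\de \le 1/2$ I would take $\tilde{\de} = 2\de$ above; for $\de \in (1/2,1)$ one simply enlarges the domain of integration on the left and uses $\de^{\be} \ge 2^{-\be}$ to absorb the loss into $C$. In both cases one replaces the integral over $Q_{r/2}^{\mu}$ on the right by a constant multiple of the one over $Q_r^{\mu}$, via the elementary bound $\fiint_{Q'}|f-\avgs{f}{Q'}|^q \le 2^q \fiint_{Q'}|f-\avgs{f}{Q}|^q$ for $Q' \subset Q$ (triangle inequality together with Jensen) and $|Q_r^{\mu}|/|Q_{r/2}^{\mu}| = 2^{N+2}$. After adjusting the constant this yields the claimed decay estimate for every $\de \in (0,1)$, with $\be, C$ still depending only on $(N,p,C_0,C_1,A)$.

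\textbf{Where the difficulty lies.} No genuinely hard estimate survives; the content is entirely in \cref{prop3.1}/\cref{prop_upper_bnd} and \cref{lemma3.2}. The one point that must be handled with care --- and that is the heart of the unified approach --- is that the lower bound $\|\nabla u\| \ge \mu/4$ produced in Step 1 is exactly what makes the coefficient matrix $B(x,t) = \mu^{2-p}\aa'(\nabla w)$ from the rescaled equation uniformly elliptic, with ellipticity quotient controlled by $(s+\mu)/\mu \le A+1$; this is why the constants $\be, C$ coming from \cref{lemma3.2} (hence from \cref{lemma_linear}) do not see the singular/degenerate dichotomy. Everything else is the geometric bookkeeping of Step 3.
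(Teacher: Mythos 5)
Your proposal is correct and takes essentially the same route as the paper, which proves \cref{prop3.3} by assembling \cref{prop3.1}, \cref{prop_upper_bnd} and \cref{lemma3.2} exactly as in \cite[Proposition 3.3]{KM}: the De Giorgi alternative yields the pointwise bound $\|\nabla u\| \geq \mu/4$ on $Q_{r/2}^{\mu}$, which makes \cref{lemma3.2} applicable on the half-cylinder, and the remaining mean-oscillation and volume-ratio bookkeeping (including the case $\de \in [1/2,1)$ by enlarging the domain of integration) gives the decay estimate with constants depending only on $(N,p,C_0,C_1,A)$. Your handling of the passage from $Q_{r/2}^{\mu}$ to $Q_{r}^{\mu}$ matches what the paper intends when it defers these details to \cite{KM}.
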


\subsection{Proof of second alternative - \texorpdfstring{\cref{alt2}}.}

Having fixed $\nu$ according to \cref{alt1}, we are now in the situation that \cref{9.48} and \cref{9.48_alt} does not hold. This condition is equivalent to 
\begin{equation}\label{alt2_cond}
        |\{ Q_{r}^{\mu}: u_{x_i} \geq \mu/2\}| < (1-\nu) |Q_r^{\mu}| \txt{and}|\{ Q_{r}^{\mu}: u_{x_i} \leq -\mu/2\}| < (1-\nu) |Q_r^{\mu}|.
\end{equation}
From \cref{sup_u}, as in \cref{3.3}, we again assume the following is always satisfied for some $r \in (0,R]$ and $A \geq 1$:
\begin{equation}\label{3.3_alt2}
    s + \sup_{Q_r^{\mu}} \|\nabla u\| \leq A \mu.
\end{equation}

Let us perform the following change of variables:
    \begin{equation}\label{9.69}
        w(x,t) = \frac{u(\mu^{-1}rx,\mu^{-p}r^2t)}{rA} \txt{for} (x,t) \in Q_1.
    \end{equation}
    Then we have 
    \begin{equation}\label{9.70}
        \frac{s}{\mu A} + \sup_{Q_1} \|\nabla w\| \leq 1 \txt{on} Q_1.
    \end{equation}
    Moreover, \cref{alt2_cond} becomes
    \begin{equation}
        \label{alt2_new}
        \abs{\left\{ Q_{1}: w_{x_i} \geq \frac{1}{2A}\right\}} < (1-\nu) |Q_1| \txt{and}\abs{\left\{ Q_{1}: w_{x_i} \leq -\frac{1}{2A}\right\}} < (1-\nu) |Q_1|.
    \end{equation}

\subsubsection{Choosing a good time slice}
First let us show there exists a good time slice.
\begin{lemma}\label{cht}
    There exists $t_{\ast}$ such that $ -1 \leq t_{\ast} \leq -\frac{\nu}{2}$ and
    \begin{equation}\label{9.68}
        \abs{\left\{ B_{1} : w_{x_i}(x,t_{\ast}) \geq \frac{1}{2A}\right\}} \leq \lbr \frac{1-\nu}{1-\nu/2}\rbr |B_{1}|
    \end{equation}

\end{lemma}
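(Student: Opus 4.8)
This is the standard ``good time slice'' lemma, and I would prove it by a pigeonhole (averaging) argument in the time variable via Fubini's theorem. I would start from the rescaled measure hypothesis \eqref{alt2_new}, which records that $\abs{\left\{Q_1 : w_{x_i} \ge \tfrac{1}{2A}\right\}} < (1-\nu)|Q_1|$, together with the fact that in this part of the argument $Q_1$ has time slab $(-1,0)$, so that $|Q_1| = |B_1|$.

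First I would set $g(t) := \abs{\left\{x \in B_1 : w_{x_i}(x,t) \ge \tfrac{1}{2A}\right\}}$ for $t \in (-1,0)$. Since $\nabla w \in L^\infty(Q_1)$ by \eqref{9.70}, the set $\left\{(x,t) \in Q_1 : w_{x_i}(x,t) \ge \tfrac{1}{2A}\right\}$ is measurable, so Fubini's theorem gives that $g$ is measurable and
\[
\int_{-1}^{0} g(t)\,dt = \abs{\left\{ Q_1 : w_{x_i} \ge \tfrac{1}{2A}\right\}} < (1-\nu)|B_1|.
\]
Next I would argue by contradiction: suppose \eqref{9.68} fails for \emph{every} $t_{\ast} \in [-1,-\nu/2]$, i.e.\ $g(t) > \tfrac{1-\nu}{1-\nu/2}|B_1|$ for a.e.\ $t \in [-1,-\nu/2]$. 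This interval is nonempty with length $1 - \nu/2$ (positive since $\nu \in (0,1/2)$), and $g \ge 0$ on all of $(-1,0)$, so
\[
\int_{-1}^{0} g(t)\,dt \ \ge\ \int_{-1}^{-\nu/2} g(t)\,dt \ >\ \lbr 1 - \tfrac{\nu}{2}\rbr \cdot \frac{1-\nu}{1-\nu/2}\,|B_1| \ =\ (1-\nu)|B_1|,
\]
contradicting the Fubini identity above. Hence there is a positive-measure set of times $t_{\ast} \in [-1,-\nu/2]$ for which \eqref{9.68} holds, and choosing any such $t_{\ast}$ finishes the proof.

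There is essentially no serious obstacle here; the only points to get right are the measurability of $g$ (immediate from Fubini together with the $L^\infty$ bound on $\nabla w$) and the numerology, namely that the factor $1-\nu/2$ appearing in the denominator of the target bound in \eqref{9.68} is precisely the length of the time interval $[-1,-\nu/2]$ over which the good slice is sought.
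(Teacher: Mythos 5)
Your proof is correct and follows essentially the same route as the paper: a contradiction (pigeonhole/Fubini) argument integrating the slice measure $g(t)$ over $(-1,-\nu/2)$, whose length $1-\nu/2$ cancels the denominator in \eqref{9.68} to contradict the bound $(1-\nu)\abs{Q_1}$ from \eqref{alt2_new}. You merely make explicit (measurability of $g$ and the normalization $\abs{Q_1}=\abs{B_1}$ for the rescaled cylinder) what the paper's proof leaves implicit.
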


\begin{proof}
    The proof is by contradiction, suppose \cref{9.68} does not hold for any $t \in (-1,-\frac{\nu}{2})$, then we have 
    \begin{equation*}
        \begin{array}{rcl}
        (1-\nu) |Q_1|\overset{\cref{alt2_new}}{>}    \abs{\left\{ Q_1 : w_{x_i} \geq \frac{1}{2A}\right\}} & = & \int_{-1}^{-\frac{\nu}{2}}  \abs{\left\{ B_{1} : w_{x_i}(x,t) \geq \frac{1}{2A}\right\}} \ dt \\
            &\overset{\text{\cref{9.68} fails}}{>} & \lbr (1-\nu)\rbr |B_{1}| ,
        
        \end{array}
    \end{equation*}
     which is a contradiction.
\end{proof}
\begin{figure}[ht!]
\begin{center}
\begin{tikzpicture}[line cap=round,line join=round,>=latex,x=0.3cm,y=0.3cm]
\draw [line width=1pt,color=black] (-5,-7)-- (5,-7) -- (5,7) -- (-5,7) -- cycle;
\draw [line width=1pt,color=black] (-2.5,-3.5) -- (2.5,-3.5)-- (2.5,3.5) -- (-2.5,3.5) -- cycle;

\draw [line width=0.1pt,opacity=0.3,dashed,<->] (-7,0) -- (7,0);
\draw [line width=0.1pt,opacity=0.3,dashed,<->] (0,-9) -- (0,9);

\draw [line width=0.1pt,opacity=1,color=teal,<->] (-7,-4) -- (7,-4);
\draw [line width=0.1pt,opacity=1,color=teal,<->] (-7,-5) -- (7,-5);
\draw [line width=0.1pt,opacity=1,color=teal,<->] (-7,-6) -- (7,-6);
\begin{scriptsize}
\draw[color=black] (7,-4) node[right] {$t=-3/4$};
\draw[color=black] (7,-5) node[right] {$t=-\nu/2$};
\draw[color=black] (7,-6) node[right] {$t=t_{\ast}$};
\end{scriptsize}
\draw[color=black] (5,5) node[right] {$Q_1$};
\draw[color=black] (2.5,2.5) node[right] {$Q_{\frac{1}{2}}$};
\end{tikzpicture}
\end{center}
\end{figure}
\subsubsection{Rescaling the equation}
Recalling \cref{9.69}, let us define
    \begin{equation}\label{9.73}
        \hat{\aa}(\zeta) := \frac{1}{\mu^{p-1} A} \aa(A \mu \zeta),
    \end{equation}
        then, we see that $w$ solves 
        \begin{equation}\label{9.74}
            w_t - \dv \hat{\aa}(\nabla w) = 0 \txt{in} Q_1. 
        \end{equation}

\begin{lemma}\label{lemma_elip_resc}
    Differentiating \cref{9.74} with respect to $x_i$ for some $i \in \{1,2,\ldots,N\}$, we get
    \begin{equation}\label{w_x_i}
    (w_{x_i})_t - \dv \pa\hat{\aa}(\nabla w)\nabla w_{x_i} = 0 \txt{in} Q_1. 
    \end{equation}
    Then $\pa\hat{\aa}(\zeta)$ and $\hat{\aa}(\zeta)$ satisfies the following structure conditions:
    \begin{equation}\label{9.75}
        \begin{array}{c}
            |\hat{\aa}(\zeta)| + |\pa\hat{\aa}(\zeta)| \lbr |\zeta|^2 + \lbr\frac{s}{A\mu}\rbr^2 \rbr^{\frac12} \leq {C_1A^{p-2}}   \lbr |\zeta|^2 + \lbr\frac{s}{A\mu}\rbr^2 \rbr^{\frac{p-1}{2}},\\
            C_0A^{p-2}  \lbr |\zeta|^2 + \lbr\frac{s}{A\mu}\rbr^2 \rbr^{\frac{p-2}{2}} |\eta|^2 \leq \iprod{\pa \hat{\aa}(\zeta)\eta}{\eta}.
        \end{array}
    \end{equation}
\end{lemma}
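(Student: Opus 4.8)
The plan is to establish the two assertions of \cref{lemma_elip_resc} — that $w_{x_i}$ solves \cref{w_x_i} and that $\hat{\aa}$ obeys \cref{9.75} — by a direct change of variables, since no hard analysis is involved.

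For the differentiated equation, I would differentiate \cref{9.74} in the $x_i$ direction and apply the chain rule $\pa_{x_i}\bigl(\hat{\aa}(\nabla w)\bigr) = \pa\hat{\aa}(\nabla w)\,\nabla w_{x_i}$, which gives \cref{w_x_i} at once. This step is only formal if $w$ is merely a weak solution, so I would justify it through the regularization scheme of \cite{KM2} recalled in \cref{regularize}, where the solutions are smooth; alternatively one differentiates difference quotients of $w$ in the $x_i$ direction, keeps them uniformly bounded using the energy estimate of \cref{energy_w}, and passes to the limit.

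For the structure conditions, the starting point is the identity $\pa\hat{\aa}(\zeta) = \mu^{2-p}\aa'(A\mu\zeta)$, obtained by differentiating \cref{9.73}, together with the elementary rescaling identity
\[
|A\mu\zeta|^2 + s^2 = (A\mu)^2\lbr|\zeta|^2 + \lbr\frac{s}{A\mu}\rbr^2\rbr ,
\]
which lets every power $(|A\mu\zeta|^2+s^2)^{q/2}$ be pulled apart as $(A\mu)^{q}\,\lbr|\zeta|^2+\lbr\frac{s}{A\mu}\rbr^2\rbr^{q/2}$. Feeding the two bounds of \cref{structure_aa_holder}, evaluated at the point $A\mu\zeta$, into this gives
\begin{align*}
|\hat{\aa}(\zeta)| + |\pa\hat{\aa}(\zeta)|\lbr|\zeta|^2+\lbr\frac{s}{A\mu}\rbr^2\rbr^{\frac12}
&= \frac{1}{\mu^{p-1}A}\lbr|\aa(A\mu\zeta)| + |\aa'(A\mu\zeta)|\,(|A\mu\zeta|^2+s^2)^{\frac12}\rbr\\
&\le \frac{C_1(A\mu)^{p-1}}{\mu^{p-1}A}\lbr|\zeta|^2+\lbr\frac{s}{A\mu}\rbr^2\rbr^{\frac{p-1}{2}} = C_1A^{p-2}\lbr|\zeta|^2+\lbr\frac{s}{A\mu}\rbr^2\rbr^{\frac{p-1}{2}},
\end{align*}
using $(A\mu)^{p-1}/(\mu^{p-1}A)=A^{p-2}$, and similarly
\[
\iprod{\pa\hat{\aa}(\zeta)\eta}{\eta} = \mu^{2-p}\iprod{\aa'(A\mu\zeta)\eta}{\eta} \ge \mu^{2-p}C_0\,(|A\mu\zeta|^2+s^2)^{\frac{p-2}{2}}|\eta|^2 = C_0A^{p-2}\lbr|\zeta|^2+\lbr\frac{s}{A\mu}\rbr^2\rbr^{\frac{p-2}{2}}|\eta|^2 ,
\]
which are exactly the two lines of \cref{9.75}.

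There is no genuine obstacle here; the only point to watch is the sign of $p-2$, which is negative in the singular range $\frac{2N}{N+2}<p\le2$, so the powers of $(A\mu)^2$ must be extracted while keeping $A\mu>0$ (true since $A\ge1$ and $\mu\ge1$). It is worth recording that the rescaled constants $C_0A^{p-2}$, $C_1A^{p-2}$ depend on $A$; this causes no trouble downstream because $A$ is always a universal constant in the applications — e.g.\ $A=2$ under the hypothesis of \cref{alt1}, and $A=(1+\eta)/\eta$ in the situation described in \cref{remark8.3} — so the constants appearing in \cref{9.75} remain universal.
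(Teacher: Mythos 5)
Your proposal is correct and is essentially the paper's own argument: the paper's proof is precisely the direct computation from \cref{9.73} and \cref{structure_aa_holder}, with the scaling factor $(A\mu)^{p-1}/(\mu^{p-1}A)=A^{p-2}$ (resp.\ $\mu^{2-p}(A\mu)^{p-2}=A^{p-2}$), and the differentiation of \cref{9.74} justified through the regularization implicit per \cref{regularize}. The only quibble is your parenthetical ``$\mu\ge 1$'', which need not hold since $\mu=\eta^n\mu_0$ can drop below $1$; but you only use $A\mu>0$, which is true regardless, so nothing is affected.
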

\begin{proof}
    The proof follows directly from \cref{structure_aa_holder} and \cref{9.73} and by using the fact that $\mu \leq \mu_0$. 
\end{proof}

\subsubsection{Logarithmic and Energy estimates}
\begin{lemma}\label{log_estimate}
    Let $0 < \eta_0 < \nu$ and $k \geq \frac{1}{4A}$ be any two fixed numbers (recall $\nu$ is from \cref{alt2}) and consider the function
\begin{equation*}%
    \Psi(z) := \log^+ \lbr \frac{\nu}{\nu - (z-(1-\nu))_+ + \eta_0} \rbr.
\end{equation*}%
Then there exists constant $C = C(N,p,C_0,C_1,A)$ such that for all $ t_1,t_2 \in (-1,1)$ with $t_1<t_2$ and all $s \in (0,1)$, there holds
\begin{equation}\label{log_est}
    \int_{B_{s} \times \{t_2\}} \Psi^2((w_{x_i}-k)_+) \ dx \leq \int_{B_{1} \times \{t_1\}} \Psi^2((w_{x_i}-k)_+) \ dx + \frac{C}{(1-s)^2} \iint_{B_1\times (t_1,t_2)} \Psi((w_{x_i}-k)_+) \ dz. 
\end{equation}
\end{lemma}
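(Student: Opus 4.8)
The statement is the standard logarithmic estimate for the derivative $w_{x_i}$, which solves the linear equation \cref{w_x_i}, namely $(w_{x_i})_t - \dv(\pa\hat{\aa}(\nabla w)\nabla w_{x_i}) = 0$, with the matrix $B(x,t) := \pa\hat{\aa}(\nabla w)$ satisfying the ellipticity bounds \cref{9.75}. The plan is to test this equation with the usual logarithmic test function and integrate over a slab $B_1 \times (t_1,t_2)$. Concretely, I would set $\Psi = \Psi((w_{x_i}-k)_+)$ and use the test function $\phi = (\Psi^2)'((w_{x_i}-k)_+)\,\zeta^2 = 2\Psi\Psi'\zeta^2$, where $\zeta \in C_c^\infty(B_1)$ is a cutoff with $\zeta \equiv 1$ on $B_s$, $0\le\zeta\le1$, and $|\nabla\zeta| \le C/(1-s)$, independent of $t$. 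The function $\Psi$ is built precisely so that $\Psi' \ge 0$, $\Psi'' = (\Psi')^2 \ge 0$ on the relevant range (this is the classical choice, cf. \cite[Chapter II]{DB93}), and $\Psi$ is bounded in terms of $\eta_0$ and $\nu$, while $\Psi'$ is bounded by a constant depending only on $\eta_0$; crucially $\Psi$ is supported where $w_{x_i} - k > 1-\nu$, so $\Psi' \le 1/\eta_0$ there but we only need qualitative boundedness of $\Psi'$ on the support.

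**Key steps.** First, since $w_{x_i}$ is only a weak solution, one works with the Steklov averaged formulation (as in \cref{weak_sol}) and passes to the limit $h \to 0$ at the end; I would suppress this and argue formally. Testing \cref{w_x_i} with $\phi$ over $B_1 \times (t_1,t_2)$, the parabolic term produces $\frac{d}{dt}\int_{B_1}\Psi^2\zeta^2\,dx$ after noting $(w_{x_i})_t \cdot 2\Psi\Psi' = \partial_t(\Psi^2)$; integrating in $t$ from $t_1$ to $t_2$ and using $\zeta\equiv1$ on $B_s$ gives the left side $\int_{B_s\times\{t_2\}}\Psi^2\,dx$ minus $\int_{B_1\times\{t_1\}}\Psi^2\zeta^2\,dx \le \int_{B_1\times\{t_1\}}\Psi^2\,dx$. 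The elliptic term, after the chain rule, splits as
\[
\iint \iprod{B\nabla w_{x_i}}{\nabla w_{x_i}}\,(\Psi^2)''\zeta^2\,dz + 2\iint \iprod{B\nabla w_{x_i}}{\nabla\zeta}\,(\Psi^2)'\zeta\,dz.
\]
Using $(\Psi^2)'' = 2(\Psi')^2 + 2\Psi\Psi'' = 2(\Psi')^2(1+\Psi) \ge 0$ together with the lower bound in \cref{9.75}, the first term is nonnegative and controls $\iint (s^2+|\nabla w|^2)^{(p-2)/2}(\Psi')^2(1+\Psi)|\nabla w_{x_i}|^2\zeta^2$; the second term is absorbed by Cauchy–Schwarz/Young into this good term at the cost of $C\iint (s^2+|\nabla w|^2)^{(p-2)/2}\Psi(\Psi')^2|\nabla\zeta|^2\,dz$ (here one uses $(\Psi^2)' = 2\Psi\Psi'$ and matches the weight $1+\Psi \gtrsim 1$ against $\Psi$). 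Finally, bounding $(s^2+|\nabla w|^2)^{(p-2)/2}$ and $(\Psi')^2$ by constants depending on $N,p,C_0,C_1,A$ (using \cref{9.70} so $|\nabla w|\le 1$, hence $s^2+|\nabla w|^2 \le 2$, and the boundedness of $\Psi'$ on its support), together with $|\nabla\zeta|^2 \le C/(1-s)^2$, yields the remaining term $\frac{C}{(1-s)^2}\iint_{B_1\times(t_1,t_2)}\Psi\,dz$, which is the right-hand side of \cref{log_est}.

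**Main obstacle.** The routine bookkeeping (Steklov averaging, the precise algebra of $(\Psi^2)'$ and $(\Psi^2)''$) is not conceptually hard. The one point requiring care is the treatment of the weight $(s^2+|\nabla w|^2)^{(p-2)/2}$ in the singular range $p<2$: there it is $|\nabla w|^{p-2}$-type and could blow up where $\nabla w$ is small — but this is exactly why the term $(s/(A\mu))^2$ appears inside the bracket in \cref{9.75}, so the quantity is $(|\zeta|^2 + (s/A\mu)^2)^{(p-2)/2}$ and, combined with the normalization $|\nabla w|\le 1$ from \cref{9.70}, it stays bounded above and below by constants depending on $A$ (and on the ratio $s/\mu$, which is harmless). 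So I expect the genuine work to be confirming that with \cref{9.75} in force the "good" second-order term dominates and the $\nabla\zeta$ cross term is absorbable uniformly — the same mechanism as in the classical DiBenedetto logarithmic estimate, now with $A$-dependent constants. This matches the hypothesis $k \ge \tfrac{1}{4A}$ and $\eta_0 < \nu$ in the statement, which are used only to guarantee $\Psi$ and $\Psi'$ are controlled.
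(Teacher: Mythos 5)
Your overall scheme (testing the differentiated equation \cref{w_x_i} with the logarithmic test function $\Psi\Psi'\zeta^2$, using $\Psi''=(\Psi')^2$, absorbing the cross term, and then bounding the coefficient weight) is the same as the paper's, but your justification of the one genuinely delicate step is wrong. You claim that the weight $\bigl(|\nabla w|^2+(s/(A\mu))^2\bigr)^{\frac{p-2}{2}}$ from \cref{9.75} ``stays bounded above and below by constants depending on $A$ (and on the ratio $s/\mu$, which is harmless)'' thanks to the $(s/(A\mu))^2$ term and the normalization \cref{9.70}. This is not so: there is no positive lower bound on $s/\mu$ (the standing assumption in this alternative is only $s\leq\mu$, and $s$ may be arbitrarily small), so for $p<2$ the weight, and with it the upper bound for $B(x,t)=\pa\hat{\aa}(\nabla w)$, blows up on the set where $\nabla w$ is small, while for $p>2$ the lower ellipticity bound degenerates there; and a constant depending on $s/\mu$ is inadmissible, since the lemma's constant must be $C(N,p,C_0,C_1,A)$ uniformly in $s$ and $\mu$ for the later covering argument. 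The correct and essential observation --- which is precisely what the paper's proof consists of --- is that because $k\geq\frac{1}{4A}$ the functions $\Psi((w_{x_i}-k)_+)$ and $\Psi'((w_{x_i}-k)_+)$ vanish wherever $w_{x_i}\leq k$, so every integrand produced by the test function is supported in $\{w_{x_i}\geq\tfrac{1}{4A}\}\subset\{|\nabla w|\geq\tfrac{1}{4A}\}$; on that set the weight is comparable to $1$ with constants depending only on $p$ and $A$, so $w_{x_i}$ may be treated as a solution of a uniformly parabolic equation, see \cref{unif_elliptic}, and the classical logarithmic estimate of DiBenedetto applies verbatim. Your closing remark that $k\geq\frac{1}{4A}$ is ``used only to guarantee $\Psi$ and $\Psi'$ are controlled'' misses (indeed contradicts) this: restoring uniform parabolicity on the support of the test function is the main role of that hypothesis.

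A secondary point: do not settle for $\Psi'\leq 1/\eta_0$, even ``qualitatively''. If that bound enters the constant, then $C$ depends on $\eta_0$, and the subsequent choice of $\eta_0$ in \cref{lemma921} (where one needs $C\log(\nu/\eta_0)/\log^{2}(\nu/(2\eta_0))$ small with $C$ already fixed) breaks down. The bound you actually need is $\Psi'\leq (k+\eta_0)^{-1}\leq 4A$, which again uses $k\geq\frac{1}{4A}$ together with $(w_{x_i}-k)_+\leq 1-k$ coming from \cref{9.70}. With these two corrections your computation does reproduce the paper's argument (the paper simply records the uniform ellipticity \cref{unif_elliptic} and quotes the linear logarithmic lemma instead of redoing the integration by parts).
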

\begin{proof}
    Let us take $\Psi((w_{x_i}-k)_+) \Psi'((w_{x_i}-k)_+) \zeta^2$ as a test function in \cref{w_x_i}. Since $k \geq \frac1{4A}$, we see that $(w_{x_i} - k)_+ =0 $ whenever $w_{x_i} \leq \frac1{4A}$. On this set, $\Psi(0) = \log^+ \lbr \frac{\nu}{\nu+\eta_0}\rbr = 0$, thus we see that $\spt{\Psi((w_{x_i}-k)_+)}$ is contained in  the set $\left\{w_{x_i} \geq \frac1{4A}\right\}$, therefore from \eqref{9.75} it follows that    
 \begin{equation}\label{unif_elliptic}
    (w_{x_i})_t - \dv B(x,t) \nabla w_{x_i} = 0, \txt{with}  \tilde C_0|\zeta|^2 \leq \iprod{B(x,t) \zeta}{\zeta} \leq C_1 A^{p-1}  |\zeta|^2.
\end{equation}
where $\tilde C_0= \frac{C_0}{2^{p-2}}$ for $p >2 $ and $\tilde C_0 = C_0  A^{p-2}$ for $p<2$. 
We can now follow the proof of \cite[Proposition 3.2 of Chapter II]{DB93} (see also \cite[(12.7) of Chapter IX]{DB93}) to get 
\[
    \int_{B_{s} \times \{t_2\}} \Psi^2((w_{x_i}-k)_+) \ dx \leq \int_{B_{s} \times \{t_1\}} \Psi^2((w_{x_i}-k)_+) \ dx + \frac{C_{(N,p,C_0,C_1,A)}}{(1-s)^2} \iint_{B_1\times (t_1,t_2)} \Psi((w_{x_i}-k)_+) \ dz. 
\]

\end{proof}


\begin{lemma}
    \label{energy_estimate}
    Let $k \geq \frac1{4A}$ be some fixed constant, then $w$ solving \cref{9.74} satisfies
    \[
    \begin{array}{l}
        \sup_{t_0<t<t_1} \int_{B_1}  (w_{x_i} - k)_+^2 \zeta^2 \ dx  + C \iint_{B_1\times (t_0,t_1)} |\nabla (w_{x_i} - k)_+|^2 \zeta^2 \ dz\leq \int_{B_1 \times \{t=t_0\}} (w_{x_i} - k)_+^2 \zeta^2 \ dx \\
        \hspace*{10cm} + C\iint_{B_1 \times (t_0,t_1)} (w_{x_i} - k)_+^2 |\nabla \zeta|^2  \ dz\\
        \hspace*{10cm} + C\iint_{B_1 \times (t_0,t_1)} (w_{x_i} - k)_+^2 |\zeta| |\zeta_t|  \ dz,
        \end{array}
    \]
where $t_0,t_1 \in [-1,1)$ with $t_0<t_1$ are some fixed time slices and $\zeta \in C^{\infty}(Q_1)$ is a cut-off function. Here $C$ has the same dependence as in Lemma \ref{log_estimate}. 
\end{lemma}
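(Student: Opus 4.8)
The plan is to run the standard Caccioppoli (energy) argument for $w_{x_i}$, exactly in the spirit of the proof of \cref{log_estimate}: the truncation level $k\ge\frac1{4A}$ forces $w_{x_i}>k\ge\frac1{4A}$ on the support of $(w_{x_i}-k)_+$, hence $|\nabla w|\ge w_{x_i}>\frac1{4A}$ there, and together with $\sup_{Q_1}\|\nabla w\|\le 1$ from \eqref{9.70} this puts us in the uniformly parabolic regime \eqref{unif_elliptic}, where the usual computation applies with constants depending only on $N,p,C_0,C_1,A$.

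First I would differentiate \eqref{9.74} with respect to $x_i$ to get \eqref{w_x_i}, i.e. $(w_{x_i})_t-\dv\big(\pa\hat{\aa}(\nabla w)\nabla w_{x_i}\big)=0$, and test it (localised in time to $(t_0,t_1)$, with a Steklov-averaging step understood as in \cref{weak_sol}) against $(w_{x_i}-k)_+\zeta^2$. The parabolic term gives, via the identity $(w_{x_i})_t(w_{x_i}-k)_+=\tfrac12\partial_t(w_{x_i}-k)_+^2$, the contribution $\tfrac12\big[\int_{B_1}(w_{x_i}-k)_+^2\zeta^2\big]_{t_0}^{t_1}-\iint (w_{x_i}-k)_+^2\zeta\zeta_t$; moving the last integral to the right-hand side produces the $|\zeta||\zeta_t|$ error term. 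The diffusion term, after expanding $\nabla\big((w_{x_i}-k)_+\zeta^2\big)$ by the product rule, splits into the principal term $\iint \iprod{\pa\hat{\aa}(\nabla w)\nabla(w_{x_i}-k)_+}{\nabla(w_{x_i}-k)_+}\zeta^2$ and a cross term $2\iint (w_{x_i}-k)_+\zeta\,\iprod{\pa\hat{\aa}(\nabla w)\nabla(w_{x_i}-k)_+}{\nabla\zeta}$.

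The one point requiring care is the two-sided control of the coefficients on the support of $(w_{x_i}-k)_+$. There we have $|\nabla w|\ge w_{x_i}>k\ge\frac1{4A}$, while $|\nabla w|\le 1$ and $\frac{s}{A\mu}\le1$ by \eqref{9.70}; hence $\frac1{16A^2}\le|\nabla w|^2+\big(\frac{s}{A\mu}\big)^2\le 2$ on that set. Plugging this into \eqref{9.75} gives, on the support, a uniform ellipticity $\iprod{\pa\hat{\aa}(\nabla w)\xi}{\xi}\ge c|\xi|^2$ and a uniform bound $|\pa\hat{\aa}(\nabla w)|\le C$, with $c,C=c,C(N,p,C_0,C_1,A)$ — note the lower bound on $|\nabla w|$ is what saves the singular range $p<2$ in the ellipticity and the upper bound is what saves the degenerate range $p\ge2$ in the coefficient bound, so both cases are handled simultaneously. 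Using $c|\nabla(w_{x_i}-k)_+|^2\zeta^2$ as a lower bound for the principal term, and Young's inequality on the cross term to absorb $\tfrac c2\iint|\nabla(w_{x_i}-k)_+|^2\zeta^2$ into the left-hand side (the remainder being $\apprle\iint(w_{x_i}-k)_+^2|\nabla\zeta|^2$), then integrating in time and taking $\sup_{t_1}$, yields the claimed estimate; as in \cref{log_estimate}, the dependence of $C$ on $A$ enters through the factor $A^{p-2}$ in \eqref{9.75}.

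Thus the only genuine content is the observation that the choice $k\ge\frac1{4A}$ localises everything to the non-degeneracy set $\{|\nabla w|\ge\frac1{4A}\}$; the rest is the textbook Caccioppoli manipulation, and I do not expect any essential obstacle beyond bookkeeping of the constants.
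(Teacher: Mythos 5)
Your proposal is correct and follows essentially the same route as the paper: test the differentiated equation \cref{w_x_i} with $(w_{x_i}-k)_+\zeta^2$, note that $k\geq \frac{1}{4A}$ together with \cref{9.70} confines the support of the test function to a set where the coefficient matrix is uniformly elliptic and bounded (exactly the structure \cref{unif_elliptic}), and then run the standard Caccioppoli computation, which the paper simply delegates to \cite[Proposition 3.1 of Chapter II]{DB93}. The only (harmless) slip is in your parenthetical aside: for $p<2$ it is the upper bound $|\nabla w|^2+(s/(A\mu))^2\leq 2$ that keeps the ellipticity constant positive, while the lower bound $\geq \frac{1}{16A^2}$ is what keeps the coefficients bounded (the roles are reversed for $p\geq 2$) — but since your argument actually uses the two-sided bound, the proof is unaffected.
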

\begin{proof}
    We take $(w_{x_i} - k)_+ \zeta^2$ as a test function in \cref{9.74}, noting that we retain the uniformly elliptic structure of $B(x,t)$ in \cref{unif_elliptic} from the choice of test function. Thus proceeding according to \cite[Proposition 3.1 of Chapter II]{DB93} gives the desired estimate.
\end{proof}

\subsubsection{DeGiorgi type iteration}

Let us now  first prove an expansion of positivity in time result:
\begin{lemma}\label{lemma921}
    There exists $\eta_0 = \eta_0(N,p,\nu,A, \mu_0) \in (0,\nu)$ such that for all $t \in (t_{\ast},1)$ with $t_{\ast}$ as in Lemma \ref{cht}, there holds
    \[
        \abs{\left\{x \in B_1: w_{x_i} (x,t) > (1-\eta_0)  \right\}} \leq \lbr 1-\frac{\nu^2}{4} \rbr |B_1|.
    \]
\end{lemma}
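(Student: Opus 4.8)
\emph{Overview.} The plan is to run the ``expansion of positivity in time'' mechanism: the logarithmic estimate of \cref{log_estimate} will transport the measure information available on the good time slice $t_\ast$ supplied by \cref{cht} to \emph{every} slice $t\in(t_\ast,1)$. I work throughout at the level $1-\nu$; since $\nu<\tfrac12$ and $A\ge1$ we have $1-\nu>\tfrac12\ge\tfrac1{2A}>\tfrac1{4A}$, so the truncation $(w_{x_i}-(1-\nu))_+$ is supported in $\{w_{x_i}\ge\tfrac1{4A}\}$, where by \cref{9.75} the equation satisfied by $w_{x_i}$ is uniformly parabolic; hence \cref{log_estimate} is applicable with $k=1-\nu$.

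\emph{Reduction and the good slice.} Set $M:=\sup_{Q_1}w_{x_i}$, so $M\le1$ by \cref{9.70}. If $M\le1-\eta_0$ then $\{x\in B_1:w_{x_i}(x,t)>1-\eta_0\}$ is empty for every $t$ and the claim is trivial, so I may assume $M>1-\eta_0$. Using \cref{cht}, pick $t_\ast\in[-1,-\tfrac\nu2]$ with $\big|\{B_1:w_{x_i}(\cdot,t_\ast)\ge\tfrac1{2A}\}\big|\le\tfrac{1-\nu}{1-\nu/2}|B_1|$; since $1-\nu\ge\tfrac1{2A}$ this gives the decisive bound
\begin{equation*}
    \big|\{B_1:w_{x_i}(\cdot,t_\ast)>1-\nu\}\big|\le\frac{1-\nu}{1-\nu/2}\,|B_1|\le\Big(1-\tfrac\nu2\Big)|B_1|,
\end{equation*}
i.e.\ at $t_\ast$ the function $\Psi\big((w_{x_i}-(1-\nu))_+\big)$, which vanishes wherever $w_{x_i}\le1-\nu$, is supported on a set of measure at most $(1-\tfrac\nu2)|B_1|$.

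\emph{The logarithmic estimate and the choice of constants.} Take $\eta_0<\tfrac\nu2$ and record the elementary pointwise bounds for $\Psi\big((w_{x_i}-(1-\nu))_+\big)$: it is $\le L_1:=\log^+\tfrac\nu{\eta_0}$ everywhere (as $M\le1$), it vanishes on $\{w_{x_i}\le1-\nu\}$, and it is $\ge L_0:=\log^+\tfrac\nu{2\eta_0}$ on $\{w_{x_i}>1-\eta_0\}$ (as $M>1-\eta_0$). Fix $\delta>0$ small, apply \cref{log_estimate} on $B_{1-\delta}$ over $(t_\ast,t)$ for arbitrary $t\in(t_\ast,1)$, and combine these bounds with $|t-t_\ast|\le2$:
\begin{equation*}
    L_0^2\,\big|\{B_{1-\delta}:w_{x_i}(\cdot,t)>1-\eta_0\}\big|\le\int_{B_{1-\delta}\times\{t\}}\Psi^2\,dx\le L_1^2\Big(1-\tfrac\nu2\Big)|B_1|+\frac{2C}{\delta^2}\,L_1\,|B_1|,
\end{equation*}
with $C$ the constant of \cref{log_estimate}. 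The point is that $L_1/L_0\to1$ and $L_1/L_0^2\to0$ as $\eta_0\to0$: first fix $\delta=\delta(N,\nu)$ so that $|B_1\setminus B_{1-\delta}|\le\tfrac\nu8|B_1|$, then fix $\eta_0\in(0,\tfrac\nu2)$, depending only on $N,p,\nu,A,\mu_0$, small enough that $L_1^2/L_0^2\le1+\tfrac\nu8$ and $2CL_1/(\delta^2L_0^2)\le\tfrac\nu8$. Dividing by $L_0^2$, using $(1+\tfrac\nu8)(1-\tfrac\nu2)\le1-\tfrac{3\nu}8$, and restoring the discarded corona gives
\begin{equation*}
    \big|\{B_1:w_{x_i}(\cdot,t)>1-\eta_0\}\big|\le\Big(1-\tfrac{3\nu}8+\tfrac\nu8+\tfrac\nu8\Big)|B_1|=\Big(1-\tfrac\nu8\Big)|B_1|\le\Big(1-\tfrac{\nu^2}4\Big)|B_1|,
\end{equation*}
the last inequality since $\nu<\tfrac12$. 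As $t\in(t_\ast,1)$ was arbitrary, this is the lemma.

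\emph{Main obstacle.} The delicate part is this last balancing: the only useful gain is the factor $1-\tfrac\nu2$ produced by \cref{cht}, and it is multiplied by the a priori uncontrolled ratio $L_1^2/L_0^2$ plus an error of order $L_1/L_0^2$; the argument closes only because ratios of logarithms tend to $1$ (and $L_1/L_0^2$ to $0$) as the regularizing parameter $\eta_0\to0$ — which is exactly why $\eta_0$ must be chosen small in terms of $\nu$, $A$ and the data, and is left non-explicit. Everything else — the reduction to the case $M>1-\eta_0$, the harmless passage from $B_1$ to $B_{1-\delta}$, and the verification that $1-\nu\ge\tfrac1{4A}$ so that \cref{log_estimate} applies — is routine.
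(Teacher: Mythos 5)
Your proposal is correct and follows essentially the same route as the paper: apply the logarithmic estimate of \cref{log_estimate} over $(t_{\ast},t)$, bound the slice term at $t_{\ast}$ via \cref{cht}, and then choose the cutoff radius and finally $\eta_0$ small using that the ratios $\log(\nu/\eta_0)/\log(\nu/(2\eta_0))\to 1$ and $\log(\nu/\eta_0)/\log^2(\nu/(2\eta_0))\to 0$. The only deviations are cosmetic: you make the choices of $\delta$ and $\eta_0$ explicit, and you take the truncation level $k=1-\nu$ (admissible since $1-\nu\geq \tfrac{1}{4A}$), which if anything is cleaner than the paper's level near $\tfrac1{4A}$ because the support of $\Psi$ then sits directly inside the set controlled by \cref{9.68}.
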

\begin{proof}
    Let us apply \cref{log_est} over the time interval $(t_{\ast},t)$ and estimate each of the terms as follows:
    \begin{description}[leftmargin=*]
        \item[Estimate for the term on the left hand side of  \cref{log_est}:] On the set $\{w_{x_i} > (1-\eta_0)\}$, we see that the log function satisfies $\Psi(w_{x_i}- k)_+) \geq \log \lbr \frac{\nu}{2\eta_0}\rbr$, thus we get
        \begin{equation}
            \label{estimate_1}
            \int_{B_{s} \times \{t_2\}} \Psi^2((w_{x_i}-k)_+) \ dx  \geq \log^2 \lbr \frac{\nu}{2\eta_0}\rbr \abs{\{x \in B_s: w_{x_i}(x,t_2) > (1-\eta_0)\}}
        \end{equation}

        \item[Estimate for the first term on the right hand side of  \cref{log_est}:] Since the log function $\Psi(w_{x_i}- k)_+)$ vanishes on the set  $\{w_{x_i} \leq 1/(4A)\}$, we estimate this term as follows:
        \begin{equation}
            \label{estimate_2}
            \int_{B_1 \times \{t=t_{\ast}\}} \Psi^2(w_{x_i}- k)_+)  \ dx \overset{\cref{9.70}}{\leq} \log^2\lbr \frac{\nu}{\eta_0} \rbr \abs{\{x \in B_1: w_{x_i}(x,t_{\ast}) \geq 1/(4A)\}} \overset{\cref{9.68}}{\leq} \lbr \frac{1-\nu}{1-\nu/2}\rbr\log^2\lbr \frac{\nu}{\eta_0} \rbr |B_1|.
        \end{equation}
        \item[Estimate for the second term on the right hand side of  \cref{log_est}:] Analogously, we estimate this term as follows:
        \begin{equation}
            \label{estimate_3}
            \frac{C}{(1-s)^2} \iint_{B_1\times (t_1,t_2)} \Psi((w_{x_i}-k)_+) \ dz \leq \frac{C}{(1-s)^2} |B_1| \log\lbr \frac{\nu}{\eta_0} \rbr.
        \end{equation}
    \end{description}
    Combining \cref{estimate_1}, \cref{estimate_2} and \cref{estimate_3}, we get the following sequence of estimates:
    \begin{equation*}
        \begin{array}{rcl}
            \abs{\{x \in B_1: w_{x_i}(x,t_2) > (1-\eta_0)\}} & \leq & \abs{\{x \in B_s: w_{x_i}(x,t_2) > (1-\eta_0)\}} + (1-s) |B_1|\\
            & \leq & \lbr \lbr \frac{1-\nu}{1-\nu/2}\rbr \lbr \frac{\log^2 \lbr \frac{\nu}{\eta_0}\rbr}{\log^2 \lbr \frac{\nu}{2\eta_0}\rbr}\rbr+  \frac{C}{(1-s)^2}\lbr \frac{\log \lbr \frac{\nu}{\eta_0}\rbr}{\log^2 \lbr \frac{\nu}{2\eta_0}\rbr}\rbr + (1-s)\rbr|B_1|.
        \end{array}
    \end{equation*}
    Choosing $s$ followed by $\eta_0$ appropriately gives the desired conclusion. 
\end{proof}
Given Lemma  \ref{lemma921}, the rest of the proof of the second alternative Proposition \ref{alt2}   is as in the linear case. We nevertheless provide details for the sake of completeness.

\begin{definition}\label{idef}
    Having determined $\eta_0$ as in \cref{lemma921}, let $j_0$ be the largest positive integer such that $2^{-j_0} \geq \eta_0$. Then for $j \geq j_0$ and any $t \in (t_{\ast},1)$, let us define
    \[
        A_j(t) := \left\{ B_1: w_{x_i} (x,t) > (1-2^{-j})\right\} \txt{and} A_j^s:=\int_{t_{\ast}}^{s} A_j(t) \ dt,
    \]
    for $s \in (t_{\ast},1]$ with $s-t_{\ast} \geq \frac18$.
\end{definition}

\begin{lemma}\label{lemma9.23}
    For every $\de \in (0,1)$, there exists $j_{\de} \geq j_0$ such that for any $s \in (t_{\ast},1]$ with $s-t_{\ast} \geq \frac18$, the following holds: $$A_{j_{\de}}^s \leq \de |B_1| |s-t_{\ast}|.$$
    Note that the estimates and choice of $j_{\de}$ are all independent of the choice of $s$.
\end{lemma}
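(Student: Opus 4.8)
The plan is to run De~Giorgi's ``shrinking of the level sets'' scheme for $w_{x_i}$, sliced in time: on each slice $B_1\times\{t\}$ one combines the isoperimetric inequality of \cref{Poincare} with the energy estimate \cref{energy_estimate}, the measure density on slices being supplied by \cref{lemma921}. Throughout, we may assume $2^{-j}<\eta_0$ for all $j\ge j_0$ (replace $j_0$ by $j_0+1$ if needed; only large $j$ will matter), and we write $D_j(t):=A_j(t)\setminus A_{j+1}(t)$ and $\mathcal D_j^s:=\{(x,t):t_{\ast}<t<s,\ x\in D_j(t)\}$, so that $|\mathcal D_j^s|=A_j^s-A_{j+1}^s$.

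First, since $2^{-j}<\eta_0$ we have $A_j(t)\subset\{x\in B_1:w_{x_i}(x,t)>1-\eta_0\}$, so \cref{lemma921} gives $|B_1\setminus A_j(t)|\ge\tfrac{\nu^2}{4}|B_1|$ for every $t\in(t_{\ast},1)$. Applying \cref{Poincare} on $B_1$ (so $\rho=1$) with $v=w_{x_i}(\cdot,t)$, $k=1-2^{-j}$ and $l=1-2^{-(j+1)}$, then integrating over $t\in(t_{\ast},s)$ and using the Cauchy--Schwarz inequality, one is led to
\[
    2^{-(j+1)}A_{j+1}^s\ \apprle\ \frac{1}{\nu^2|B_1|}\,|\mathcal D_j^s|^{1/2}\Big(\iint_{\mathcal D_j^s}|\nabla w_{x_i}|^2\,dz\Big)^{1/2}.
\]
The key is then to bound the gradient term. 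On $\mathcal D_j^s$ one has $\nabla w_{x_i}=\nabla(w_{x_i}-(1-2^{-j}))_+$ and, because $\sup_{Q_1}\|\nabla w\|\le1$ by \cref{9.70}, also $(w_{x_i}-(1-2^{-j}))_+\le 2^{-j}$ on $Q_1$. Feeding $k=1-2^{-j}$, $[t_0,t_1]=[t_{\ast},s]$ and the cut-off $\zeta\equiv1$ on $B_1$ into \cref{energy_estimate} (legitimate since $w_{x_i}$ solves a uniformly elliptic linear equation, cf.\ \cref{unif_elliptic}, on a neighbourhood of $\overline{B_1}$, so no lateral boundary term arises) bounds every term on the right by a constant times $2^{-2j}|B_1|(1+|s-t_{\ast}|)$, whence $\iint_{\mathcal D_j^s}|\nabla w_{x_i}|^2\,dz\apprle 2^{-2j}|B_1|$. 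Substituting this back, the factors $2^{-j}$ cancel and one obtains
\[
    (A_{j+1}^s)^2\ \le\ \Lambda^2\,(A_j^s-A_{j+1}^s),
\]
with $\Lambda$ depending only on $N,p,C_0,C_1,A$ and \emph{not} on $j$ or $s$.

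It then remains to iterate. Summing this inequality over $j=j_0,\dots,j_0+m-1$, the right-hand side telescopes to $\Lambda^2(A_{j_0}^s-A_{j_0+m}^s)\le\Lambda^2|B_1||s-t_{\ast}|\le 2\Lambda^2|B_1|$, while, since $j\mapsto A_j^s$ is non-increasing, the left-hand side is at least $m\,(A_{j_0+m}^s)^2$; hence $(A_{j_0+m}^s)^2\le 2\Lambda^2|B_1|/m$. Because $|s-t_{\ast}|\ge\tfrac18$, to secure $A_{j_0+m}^s\le\delta|B_1||s-t_{\ast}|$ it is enough that $A_{j_0+m}^s\le\tfrac{\delta}{8}|B_1|$, which holds once $m\ge 128\,\Lambda^2/(\delta^2|B_1|)$. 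Taking $j_\delta:=j_0+\big\lceil 128\,\Lambda^2/(\delta^2|B_1|)\big\rceil$, a choice depending only on $\delta$ and the data and not on $s$, would complete the proof.

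The main obstacle is precisely the cancellation of the $2^{-j}$ powers, i.e.\ extracting the factor $2^{-2j}$ from the energy estimate; this is exactly where the a priori Lipschitz bound $\|\nabla w\|_{L^\infty(Q_1)}\le1$ (equivalently $\sup_{Q_r^{\mu}}\|\nabla u\|\le A\mu$) enters, through $(w_{x_i}-(1-2^{-j}))_+\le 2^{-j}$. Without this decay the telescoping sum would carry an uncontrolled weight $4^{j}$ and the iteration would fail. A minor technical point is the use of \cref{energy_estimate} with a cut-off identically $1$ on $B_1$, which is harmless since $w$ solves the equation on a cylinder slightly larger than $Q_1$.
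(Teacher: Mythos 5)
Your argument is correct and is essentially the paper's own proof of \cref{lemma9.23}: the slice-wise De Giorgi--Poincar\'e inequality from \cref{Poincare} with the measure density supplied by \cref{lemma921}, the energy estimate of \cref{energy_estimate} combined with $(w_{x_i}-(1-2^{-j}))_+\le 2^{-j}$ (from \cref{9.70}) to produce the factor $4^{-j}$ that cancels the $2^{j}$ coming from the levels, and a summation over $j$ of the resulting squared inequality to fix $j_{\delta}$ depending only on $\delta$ and the data, independently of $s$. The only cosmetic deviations are your use of a spatial cut-off identically equal to $1$ (the paper keeps a genuine cut-off and absorbs the extra terms using $|\zeta_t|\lesssim 1$, which amounts to the same bookkeeping) and your handling of the factor $|s-t_{\ast}|$ via $\tfrac18\le |s-t_{\ast}|\le 2$ at the end rather than carrying it through the iteration as the paper does.
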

\begin{proof}
From \cref{lemma921}, we see that 
    \begin{equation}\label{983}
        |B_1 \setminus A_j(t)| \geq \lbr \frac{\nu}{2}\rbr^2 |B_1| \txt{for any} t \in (t_{\ast},1].
    \end{equation}
    Let us apply \cref{Poincare} with the choices $l= 1-\frac{1}{2^{j+1}}$ and $k= 1-\frac{1}{2^j}$ for some $j \geq j_0$ where $j_0$ is as defined in \cref{idef} to get
    \[\begin{array}{rcl}
        \frac{1}{2^{j+1}} |A_{j+1}(t)| &\leq&  \frac{C}{\abs{B_1 \setminus A_j(t)}} \int_{A_j(t) \setminus A_{j+1}(t)} |\nabla w_{x_i}| \ dx \\
        & \overset{\cref{983}}{\leq}&  C_{(N,p,\nu)} \lbr \int_{B_1} |\nabla (w_{x_i} - (1-2^{-j}))_+|^2 \ dx \rbr^{\frac12} |A_j(t) \setminus A_{j+1}(t)|^{\frac12}.    
        \end{array}
    \]
Integrating over $(t_{\ast},s)$, we get
\begin{equation}\label{984}
        \frac{1}{2^{j+1}} |A_{j+1}^s|  \leq C_{(N,p,\nu)} \lbr \iint_{B_1\times (t_{\ast},s)} |\nabla (w_{x_i} - (1-2^{-j}))_+|^2 \ dz \rbr^{\frac12} |A_j^s \setminus A_{j+1}^s|^{\frac12}.   
    \end{equation}
    
    From \cref{9.70} and the choice of $k=1-\frac{1}{2^j}$, we get
    \begin{equation*}%
        (w_{x_i} - k)_+ \leq \frac{1}{2^j},
    \end{equation*}%
    using which, we  now estimate the gradient term on the right hand side of \cref{984} using \cref{energy_estimate} to get
    \begin{equation}\label{985}
        \iint_{B_1\times (t_{\ast},s)} |\nabla (w_{x_i} - (1-2^{-j}))_+|^2 \ dz \leq C \frac{1}{4^j} |B_1| |s-t_{\ast}|,
    \end{equation}
    where we used the fact that $|\zeta_t| \leq \frac{C}{s-t_{\ast}} \leq C$. Substituting \cref{985} into \cref{984} and summing over $j=j_0,j_0+1, \ldots,j_{\de}$, we get
    \begin{equation*}%
        (j_{\de} - j_0) |A_{j_{\de}}^s|^2 \leq C |B_1| |s-t_{\ast}| \sum_{j=j_{\de}}^{j_0} |A_j^s \setminus A_{j+1}^s| \leq C\lbr |B_1| |s-t_{\ast}|\rbr^2.
    \end{equation*}%
Choosing $j_{\de}$ sufficiently large followed by taking square roots gives the desired  conclusion.

\end{proof}

\subsubsection{Combining all the estimates to prove \texorpdfstring{\cref{alt2}}.}

Let us prove the following result by applying DeGiorgi iteration, using which we  easily conclude \cref{alt2}.
\begin{proposition}\label{main_prop}
    Suppose the first (analogously second) inequality in \cref{alt2_new} holds, then there exist positive constant $\eta  \in (0,1)$ such that the following conclusion holds:
    \[
        \abs{\left\{Q_{\frac12}: w_{x_i}(z) > (1-\eta)\right\}} = 0 \qquad \lbr\text{analogously} \ \ \  \abs{\left\{Q_{\frac12}: w_{x_i}(z) < -(1-\eta)\right\}} = 0\rbr.
    \]
\end{proposition}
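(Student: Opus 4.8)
The plan is to run a De~Giorgi iteration on the super-level sets of $w_{x_i}$ at heights accumulating at $1-\eta$, with the decisive measure-smallness input supplied by \cref{lemma9.23}. I treat the first inequality in \cref{alt2_new}; the second is identical upon replacing $w_{x_i}$ by $-w_{x_i}$ and invoking \cref{prop_upper_bnd} together with the evident dual versions of \cref{cht,lemma921,lemma9.23}.

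First I would set up the iteration. Fix a small $\bar\de\in(0,1)$, to be pinned down at the end depending only on $(N,p,C_0,C_1,A)$; apply \cref{lemma9.23} with $\de=\bar\de$ to get $j_{\bar\de}\geq j_0$, and set $\eta:=2^{-j_{\bar\de}-1}$. Take the increasing levels $k_m:=1-2^{-j_{\bar\de}-1}(1+2^{-m})$, so that $k_0=1-2^{-j_{\bar\de}}$ is exactly the level appearing in \cref{idef}, $k_m\nearrow 1-\eta$, and each $k_m\geq\tfrac{1}{4A}$ once $\eta$ is small; and nested cylinders $Q_m:=B_{\rho_m}\times(\sigma_m,1)$ with $\rho_m:=\tfrac12+2^{-m-1}\searrow\tfrac12$ and $\sigma_m:=t_\ast+\tfrac18(1-2^{-m})$ (so $\sigma_0=t_\ast$, every $Q_m$ lies in the range of validity of \cref{lemma9.23}, and $Q_m$ decreases to the cylinder $Q_{1/2}$ of the statement, possibly after a harmless time-shift and a finite splitting of the time direction exactly as in \cite[Ch.~IX]{DB93} and \cite{KM}), together with cut-offs $\zeta_m$ equal to $1$ on $Q_{m+1}$, vanishing on $\pa_pQ_m$, with $|\nabla\zeta_m|^2+|\pa_t\zeta_m|\apprle 4^m$. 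Put $Y_m:=|\{Q_m:w_{x_i}>k_m\}|$.

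Next I would derive the recursion $Y_{m+1}\apprle b^m\,Y_m^{1+\frac{2}{N+2}}$. Since $(w_{x_i}-k_m)_+$ is supported in $\{w_{x_i}\geq\tfrac{1}{4A}\}$, the equation for $w_{x_i}$ is uniformly parabolic on that set, cf.~\cref{unif_elliptic}, so the energy estimate \cref{energy_estimate} applies, and its initial-time term drops out because $\zeta_m$ vanishes at the bottom slice $\sigma_m$. Feeding the resulting bound for $\|(w_{x_i}-k_m)_+\zeta_m\|_{V^{2,2}}$ into the parabolic Sobolev embedding \cref{par_sob_emb} (legitimate as $(w_{x_i}-k_m)_+\zeta_m\in V^{2,2}_0$), and using $(w_{x_i}-k_m)_+\leq 1$ on its support (by \eqref{9.70}), the gap $k_{m+1}-k_m\apprge 2^{-m}$, and Chebyshev's inequality, yields the recursion with $b>1$ depending only on the data. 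Finally, \cref{lemma9.23} with $s=1$ gives $Y_0=A_{j_{\bar\de}}^{1}\apprle\bar\de\,|B_1|$; choosing $\bar\de$ small so that $Y_0$ meets the smallness threshold of \cref{iteration}, that lemma forces $Y_m\to 0$, i.e.\ $|\{Q_{1/2}:w_{x_i}>1-\eta\}|=0$. Undoing the affine change of variables \eqref{9.69} transfers this null set to $u_{x_i}$, which is \cref{main_prop}.

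I expect the main obstacle to be the derivation of the recursion with constants depending only on $(N,p,C_0,C_1,A)$ — in particular, balancing \cref{energy_estimate} against \cref{par_sob_emb} so that every inhomogeneous term is absorbed and verifying that the bottom-slice contribution genuinely vanishes — together with the geometric bookkeeping that keeps each iteration cylinder inside the range of validity of \cref{lemma9.23}. Everything else is routine; the genuine analytic content of the second alternative is already carried by \cref{lemma921,lemma9.23}, which is why those are proved in full above.
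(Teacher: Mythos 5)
Your overall scheme coincides with the paper's: a De~Giorgi iteration on super-level sets of $w_{x_i}$ with levels accumulating at $1-\eta$, the energy estimate of \cref{energy_estimate} fed into the parabolic Sobolev embedding \cref{par_sob_emb}, Chebyshev's inequality to close the recursion, the initial smallness supplied by \cref{lemma9.23}, and \cref{iteration} to conclude; your anchoring of the time intervals at $t_{\ast}$ is a harmless variant of the paper's symmetric cylinders $B_{\rho_n}\times(-\rho_n^2,\rho_n^2)$, and the vanishing of the cut-off on the bottom slice plays the same role in both arguments.

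The one quantitative point you defer, namely that the recursion constant depends only on $(N,p,C_0,C_1,A)$, is however exactly where your stated ingredients do not close. With $k_m=1-2^{-j_{\bar\de}-1}(1+2^{-m})$ the gap is $k_{m+1}-k_m=2^{-j_{\bar\de}-m-2}$, \emph{not} $\gtrsim 2^{-m}$ with a data constant, so Chebyshev contributes a factor $4^{\,j_{\bar\de}+m}$; if on the energy/Sobolev side you only use $(w_{x_i}-k_m)_+\leq 1$, the recursion you obtain is $Y_{m+1}\leq C\,4^{\,j_{\bar\de}}\,b^m\,Y_m^{1+\frac{2}{N+2}}$, whose smallness threshold from \cref{iteration} scales like $4^{-j_{\bar\de}(N+2)/2}$. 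Since \cref{lemma9.23} only gives $Y_0\lesssim\bar\de$ while (from its proof) $j_{\bar\de}$ grows like $\bar\de^{-2}$, the final step ``choose $\bar\de$ small at the end'' is circular: the threshold shrinks far faster than $\bar\de$ does, and the argument as written does not close. The repair is elementary and is precisely what the paper's proof does: by \eqref{9.70} one has $w_{x_i}\leq 1$, and $k_m\geq 1-2^{-j_{\bar\de}}$, hence $(w_{x_i}-k_m)_+\leq 2^{-j_{\bar\de}}$ on its support; the energy terms then carry a factor $4^{-j_{\bar\de}}$ which cancels the $4^{\,j_{\bar\de}}$ from Chebyshev, leaving $Y_{m+1}\leq C\,16^{m}\,Y_m^{1+\frac{2}{N+2}}$ with data-only constants. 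With that cancellation the threshold $\de$ is universal, $j_{\ast}=j_{\de}$ is then fixed by \cref{lemma9.23}, and $\eta$ is determined by $j_{\ast}$ exactly as in the paper; the rest of your outline goes through.
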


\begin{proof}
    Consider the family of nested cylinders 
    \[
        Q_n := B_{\rho_n} \times (-\rho_n^2,\rho_n^2) \txt{where} \rho_n:= \frac12 + \frac{1}{2^{n+2}} \quad \text{with} \ n=0,1,2,\ldots,
    \]
    and increasing intervals 
    \[
        k_n:= 1- \frac{1}{2^{j_{\ast}+2}} - \frac{1}{2^{j_{\ast}+2+n}},
    \]
    where $j_{\ast}$ is a number to be eventually chosen. Furthermore, define
    \[
        Y_n:= \abs{\{(x, t) \in Q_n: w_{x_i} (x, t)  > k_n\}}.
    \]
    From \cref{energy_estimate} and \cref{par_sob_emb} applied with $\tp=2$, we get the following sequence of estimates (note that $\zeta_n$ is the usual cut-off function with $\zeta_n= 1$ on $Q_{n+1}$ and $\spt(\zeta_n) \subset Q_n$):
    \begin{equation}\label{988}
        \begin{array}{rcl}
            \iint_{Q_{n+1}} (w_{x_i} - k_n)_+^2 \ dz & \overset{\redlabel{988a}{a}}{\leq} &  \iint_{Q_{n}} (w_{x_i} - k_n)_+^2\zeta_n^2 \ dz \\
            & \overset{\redlabel{988b}{b}}{\leq} & \lbr \iint_{Q_n} | (w_{x_i} - k_n)_+\zeta_n|^{\frac{2N}{N+2}} \ dz \rbr^{\frac{N}{N+2}} Y_n^{\frac{2}{N+2}}\\
            &  \overset{\redlabel{988c}{c}}{\apprle}  & \|(w_{x_i} - k_n)_+^2\zeta_n^2\|_{V^{2,2}(Q_n)}^2 Y_n^{\frac{2}{N+2}}\\
            &  \overset{\redlabel{988d}{d}}{\leq}  & C 4^{j_{\ast}} Y_n^{1+\frac{2}{N+2}},
        \end{array}
    \end{equation}
    where to obtain \redref{988a}{a}, we enlarged the domain of integration noting that $\zeta_n \equiv 1$ on $Q_{n+1}$, to obtain \redref{988b}{b}, we applied H\"older inequality and made use of the definition of $Y_n$, to obtain \redref{988c}{c}, we applied \cref{par_sob_emb} with $\tp=2$ and $\tq = \frac{2(N+2)}{N}$ and finally to obtain \redref{988d}{d}, we made use of the definition of $k_n$ along with \cref{energy_estimate}.

On the other hand, from \cref{chebyschev}, we also have
\begin{equation}\label{989}
    Y_{n+1} \leq C 4^{n+j_{\ast}} \iint_{Q_{n+1}} (w_{x_i} - k_n)_+^2 \ dz.
    \end{equation}
    Combining \cref{989} and \cref{988}, we get 
    \[
        Y_{n+1} \leq C 4^{n} Y_n^{1+\frac{2}{N+2}},
    \]
    and applying \cref{iteration}, we see that if $$Y_0 \leq C^{-\frac{N+2}{2}}4^{-\lbr \frac{N+2}{2}\rbr^2} := \de,$$ then $Y_n \rightarrow 0$ as $n \rightarrow \infty$. 

    Note that $\de = \de(\data)$, thus with this choice of $\de$, we can obtain $j_{\de}$ (and set $j_{\ast} = j_{\de}$) from \cref{lemma9.23} such that the condition for $Y_0$ is satisfied and thus $Y_{\infty} = 0$ which is the desired conclusion with $\eta = 2^{-(j_{\ast} +2)}$.
\end{proof}

\section*{References}
\bibliographystyle{plain}

\end{document}